\font\cyr=wncyr7
\def\@evenfoot{\rule{0pt}{20pt}[May 1, 2020] \hfill [{\tt \jobname.tex}]}
\def\@oddfoot{\rule{0pt}{20pt}{[\tt \jobname.tex}]\hfill [May 1, 2020]}
\def\DATE{\today}
\newtheorem{theorem}{Theorem}
\newtheorem{corollary}[theorem]{Corollary}
\newtheorem{proposition}[theorem]{Proposition}
\newtheorem*{theoremA}{Theorem~A}
\newtheorem*{theoremB}{Theorem~B}
\newtheorem*{fact}{Fact}
\theoremstyle{definition}
\newtheorem{example}[theorem]{Example}
\newtheorem{question}[theorem]{Question}
\newtheorem{remark}[theorem]{Remark}
\newtheorem{definition}[theorem]{Definition}
\def\uoC{{\underline {\EuScript C}}}
\def\uE{{\underline E}}
\def\ucan{{\underline {\can}}}
\def\uX{{\underline X}}
\def\shufflerm{\hbox{{\cyr \cyracc Sh}}}
\def\stan{{\rm st}}
\def\ush{{\rm Sh}}
\def\oucC{{\overline \ucC}}
\def\oDelta{{\overline \Delta}}
\def\ocC{{\overline \cC}}
\def\ucobar{{\underline \cobar}}
\def\ucC{{\underline \cC}}
\def\uD{{\underline \D}}
\def\In{{\rm In}}
\def\vert{{\rm Vert}}
\def\Tr{{\rm Tr}}
\def\Ker{{\rm Ker}}
\def\J{{\mathfrak I}}
\def\cC{{\EuScript C}}
\def\Alg{\hbox{$\ttP$-${\tt Oper}_1$}} 
\def\Collect{\hbox{$\ttP$-${\tt Coll}_1$}} 
\def\Collectord{\hbox{$\Ord$-${\tt Coll}_1$}} 
\def\sgn{{\rm signum}}
\DeclareMathOperator\can{{\it can}}
\DeclareMathOperator\card{{card}}
\def\cobar{{\mathbb \Omega}}
\def\eul#1{{\chi\big(#1\big)}} 
\def\ss{{\mathfrak s}}
\def\D{{\mathbb D}}
\def\-{\!-\!}
\def\+{\!+\!}
\def\uoO{{\underline\oO}}
\def\uoP{{\underline\oP}}
\def\susp{\uparrow \hskip -.35em}
\def\antishriek{{\hbox{\scriptsize\rm \raisebox{.2em}{!`}}}}
\def\desusp{\downarrow\!}
\def\pcirc{\hbox{\tiny $\diamondsuit$}}
\def\sspcirc{\hbox{\tiny $\diamondsuit^{\hbox{$\ss$}}_{\hbox{$r$}}$}}
\def\M{{\EuScript M}}
\def\K{{\underline {\EuScript K}}}
\def\f{{\gamma}}
\def\bbN{{\mathbb N}}
\def\fib{\triangleright}
\def\uAss{\underline{\hbox{$\mathscr A \hskip -.2em ss$}}}
\def\ainf{{\uAss\,}_\infty}
\def\pa{{\partial}}
\def\min{{\mathfrak M}}
\def\oC{{\EuScript C}}
\def\uoS{{\underline \oS}}
\def\oO{{\EuScript O}}
\def\Op#1{\hbox{$#1$-{\tt Oper}}}
\def\uF{\underline{\mathbb F}}
\def\coll#1#2{{\{#1(\underline #2)\}_{#2 \geq 1}}}
\def\F{{\mathbb F}}
\def\oS{{\EuScript S}}
\def\des{{\rm des}}
\def\Pterm{{\sf 1}_{\ttP}}
\def\Ord{{\mathbb \Delta_{\rm semi}}}
\def\comp{{\rm comp}}
\def\doubless#1#2{{
\def\arraystretch{.5}
\begin{array}{c}
\mbox{$\scriptstyle #1$}
\\
\mbox{$\scriptstyle #2$}
\end{array}\def\arraystretch{1}
}}
\def\Lin{\Vect}
\def\End{{\EuScript E}nd}
\def\oP{{\EuScript P}}
\def\ttO{{\tt O}}
\def\rada#1#2{{#1,\ldots,#2}}
\def\Rada#1#2#3{#1_{#2},\dots,#1_{#3}}
\def\CC{\hbox{\it CC}}
\def\Span{{\rm Span}}
\def\Set{{\tt Set}}
\def\term{\hbox {$\it per \hskip -.1em   {\mathcal A}s$}}
\def\tildeterm{\hbox {$\it per \hskip -.1em  {\widetilde {\mathcal A}s}$}}
\def\tw{\hbox {$\it per \hskip -.1em  {\widetilde {\mathcal A}s^!}$}}
\def\ssterm{\hbox {\scriptsize $\it per \hskip -.1em   {\mathcal A}s$}}
\def\PP{{\mathbb P}}
\def\id{1\!\!1}
\def\Coll{{\tt Coll}}
\def\epi{{\hbox{ $\twoheadrightarrow$ }}}
\def\bfk{{\mathbb k}}
\def\bbk{{\mathbb k}}
\def\Vect{{\tt Vec}}
\def\ot{\otimes}
\def\inv#1{{#1}^{-1}}
\def\rada#1#2{{#1,\ldots,#2}}
\def\Surj{{\tt Surj}}
\def\Ass{\underline{{\mathcal A}{\it ss}}}
\def\Fin{{\tt Fin}}
\def\RTr{{\tt RTr}}
\def\ttP{{\tt Per}}
\def\qb{quasi\-bijection}
\title[Permutads and hidden associahedron] {Permutads
via operadic categories,\\ and the hidden associahedron}
\author[M.\ Markl]{Martin Markl}
\address{The Czech Academy of Sciences, Institute of Mathematics, {\v Z}itn{\'a} 25,
         115 67 Prague 1, The Czech Republic}
\keywords{Operadic category, permutad, shuffle algebra, 
opfibration, Koszulity} 
\subjclass[2010]{18D50 (Primary), 18D20, 18D10 (Secondary)}
\thanks{Supported by Praemium Academiae, 
grant  GA \v CR 18-07776S and RVO: 67985840.}
\begin{document}

\parskip3pt plus 1pt minus .5pt
\baselineskip 17.25pt  plus 1.5pt minus .5pt

\begin{abstract}
The present article exploits the fact that permutads (aka shuffle algebras)
are algebras over a terminal operad in a certain operadic category $\ttP$. In
the first, classical part we formulate and prove a claim
envisaged by Loday and Ronco that the cellular chains of the
permutohedra form the minimal model of the terminal permutad which is
moreover, in the sense we define, self-dual and Koszul. 
In the second part we
study Koszulity of $\ttP$-operads. Among other things we prove that the
terminal $\ttP$-operad is Koszul
self-dual. We then describe strongly homotopy permutads as algebras of its
minimal model. Our paper shall advertise
analogous future results valid in general operadic categories,
and the prominent r\^ole of operadic (op)fibrations in the related theory.
\end{abstract}

\maketitle
\bibliographystyle{plain}

\tableofcontents

\section*{Motivations and the main results}

Recall that differential graded (abbreviated dg) associative algebras
are triples \hbox{$A = (A,\bullet,d)$} consisting of a 
graded vector space $A$, an associative degree-$0$
product \hbox{$\bullet : A \ot A \to A$}, and 
a~differential $d$ which is a derivation with respect to the product. 
Associative algebras are however too rigid for some applications
in homological algebra and deformation theory. 
For instance, having an algebra $A
= (A,\bullet,d)$ as
above and a dg vector space $(A',d')$ chain homotopy equivalent to
$(A,d)$, there is in general no associative
multiplication on $(A',d')$ induced by the one on $(A,d)$.
 
The way around is to embed dg associative algebras into the category 
of {\em strongly homotopy\/} associative algebras. These objects,  also called
$A_\infty$-algebras, are dg vector spaces $(A,d)$ together with operations
\hbox{$\{\mu_k \! :\! A^{\otimes k} \to A\}_{k \geq 2}$} 
such that \hbox{$\mu_2\! :\! A \ot\! A \to A$} is
associative up to the chain homotopy~$\mu_3$, and
$\rada{\mu_2}{\mu_{k-1}}$ satisfy, for each $k \geq
4$, a specific coherence relation up to the chain homotopy~$\mu_k$, 
cf.~\cite[Example~II.3.132]{markl-shnider-stasheff:book} for a precise
definition. $A_\infty$-algebras indeed
behave well with respect to chain homotopy equivalences of their
underlying dg vector spaces~\cite{tr}.

Operad theory explains why it is so.  In the operadic
context, dg associative algebras emerge as algebras for the
non-$\Sigma$ (non-symmetric) operad $\Ass$. 
It is a quadratic operad, and its quadratic dual
$\Ass^!$ is isomorphic to
$\Ass$~\cite[Thm.~2.1.11]{ginzburg-kapranov:DMJ94}.\footnote{The
  results and definitions cited here were formulated for
  $\Sigma$- (symmetric) operads, but their obvious non-$\Sigma$
  versions hold as well.} 
Definition~4.2.14 of~\cite{ginzburg-kapranov:DMJ94} thus interprets
\hbox{$A_\infty$-algebras} as
algebras for the dual
dg operad $\D(\Ass)$, the dg operad
defined as the operadic bar construction applied to
the suspended piece-wise linear
dual of $\Ass$~\cite[page~245]{ginzburg-kapranov:DMJ94}.
The dual dg operad is, by construction, free with a quadratic differential.  
In~particular, it is cofibrant, and 
its cofibrancy  guarantees good homotopy properties of its
algebras, which is a~principle latent  in topology since 
Boardman-Vogt's~\cite{boardman-vogt:73}, spelled out for algebra e.g.\
in~\cite{berger-moerdijk:02,markl:ha}.

Strongly homotopy algebras are algebraic cousins of
{\em $A_\infty$-spaces\/}, which are topological spaces
with an action of the  operad
$\K$ of Stasheff's associahedra~\cite{jds:hahI}. The operad~$\K$ lives in the
category of contractible cell complexes; its arity-two 
piece~$\K(2)$ is the point, $\K(3)$ is the interval and $\K(4)$ the pentagon.
A portrait of $\K(5)$ can be found
in~\hbox{\cite[page~10]{markl-shnider-stasheff:book}}. 

A bridge between topology and algebra is an explicit isomorphism
between the cellular chain complex $\CC_*(\K)$ of Stasheff's operad
and the dual dg operad $\D(\Ass)$, constructed
in~\hbox{\cite[Example~4.8]{markl:zebrulka}}.  The contractibility of
the pieces of $\K$ together with the isomorphism
$\D(\Ass) \cong \CC_*(\K)$ implies that the dg vector space
$\D(\Ass)(n)$ is acyclic in positive dimensions for each arity $n$.
One can easily derive  from this that the canonical map $\D(\Ass) \to \Ass$ is a
piece-wise homology isomorphism, therefore $\D(\Ass)$ is the minimal
model of
$\Ass$~\hbox{\cite[Def.~II.3.124]{markl-shnider-stasheff:book}}. This,
by~\cite[Prop.~2.6]{markl:zebrulka}, implies that $\Ass$ is Koszul.

Let us point out that Koszulity is an important property of quadratic operads that
e.g.~implies the existence of a well-behaved deformation theory for the related
category of algebras. The r\^ole of the contractibility of Stasheff's
associahedra in this context was noticed already in the author's 1994 
article~\cite{muj-super}.

Another distinguishing feature of  $\Ass$ is that it is the
linearization of the terminal non-$\Sigma$ operad in the category of
sets, i.e.\ of the $\Set$-operad
whose all pieces equal the one-point set~$*$ and all structure operations
are the identifications $* \times \cdots \times * = *$. Abusing
slightly the terminology, we will call $\Ass$ the terminal
non-$\Sigma$ operad as well.  The above observations can be then concisely
formulated as

\begin{fact}
The cellular chain complex of the Stasheff's
associahedron is the minimal model of the terminal non-$\Sigma$-operad.
This terminal operad is quadratic Koszul.
\end{fact}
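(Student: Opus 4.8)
The statement collects the threads recalled above, so the plan is to make each implication precise and then chain them. First I would pin down $\Ass$ as the linearization of the terminal non-$\Sigma$ set-operad and record that it is quadratic with $\Ass^! \cong \Ass$, so that the dual dg operad $\D(\Ass)$ arises as the operadic bar construction on the suspended piece-wise dual of $\Ass$. By construction $\D(\Ass)$ is free as a graded operad and carries a quadratic, hence decomposable, differential; this is already the correct shape for a minimal model, so the only thing left to verify is that the canonical projection $\D(\Ass) \to \Ass$ is a piece-wise homology isomorphism.

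The geometric heart of the argument is the explicit isomorphism $\D(\Ass) \cong \CC_*(\K)$ with the cellular chain operad of Stasheff's associahedra, from \cite[Example~4.8]{markl:zebrulka}. I would import this identification cell by cell: it matches the generators of $\D(\Ass)$ in arity $n$ with the cells of $\K(n)$ and the quadratic differential with the cellular boundary, so that $\D(\Ass)(n)$ and $\CC_*(\K(n))$ agree as chain complexes. Since each $\K(n)$ is a convex polytope, and therefore contractible, its cellular chains have the homology of a point; hence $\D(\Ass)(n)$ is acyclic in all positive dimensions, with its zeroth homology the one-dimensional space $\Ass(n)$.

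Acyclicity in positive degrees is exactly the assertion that the augmentation $\D(\Ass) \to \Ass$ induces a homology isomorphism in every arity. Together with freeness and the decomposability of the differential this exhibits $\D(\Ass)$ as a minimal quasi-free resolution of $\Ass$, i.e.\ its minimal model in the sense of \cite[Def.~II.3.124]{markl-shnider-stasheff:book}. Finally I would invoke the criterion \cite[Prop.~2.6]{markl:zebrulka}, by which a quadratic operad whose minimal model is the quasi-free operad on its quadratic dual with purely quadratic differential is Koszul; applied to $\Ass$ this yields the second assertion.

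The main obstacle is concentrated entirely in the identification $\D(\Ass) \cong \CC_*(\K)$: one must match the combinatorics of iterated operadic compositions governing the bar differential with the face poset of the associahedra, keeping track of degrees, suspension shifts and signs. Once that dictionary is in place the topology does the rest, since the contractibility of a convex polytope is elementary, and the passage from acyclicity to minimality and then to Koszulity is formal, resting only on the cited definition and proposition.
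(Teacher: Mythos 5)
Your proposal follows the paper's own argument essentially verbatim: the identification $\D(\Ass)\cong\CC_*(\K)$ from \cite[Example~4.8]{markl:zebrulka}, contractibility of the associahedra giving acyclicity in positive degrees, hence the canonical map $\D(\Ass)\to\Ass$ is a piece-wise homology isomorphism and $\D(\Ass)$ is the minimal model, with Koszulity then supplied by \cite[Prop.~2.6]{markl:zebrulka}. The reasoning and the cited ingredients coincide with those the paper uses, so there is nothing to add.
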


The original humble aim of this note was to formulate and prove an
analog of this Fact for permutads, using indications provided 
by J.-L.~Loday and M.~Ronco in~\cite{loday11:_permut}. 
We however decided to extend it to
an advertisement for the theory of operadic categories developed in~\cite{Sydney}
and~\cite{duodel}, using permutads as an excuse. 

\vskip .2em
\noindent 
{\bf Part~1\/}, independent of the theory of operadic categories, 
starts by recalling,
following~\cite{loday11:_permut}, permutads and the related notions. We then
introduce quadratic permutads and their Koszul duals and formulate the
Koszulity property for quadratic permutads in terms of a suitably
defined dual bar construction.
We close Part~1 by a permutadic  analog of the 
Ginzburg-Kapranov power series test~\cite[Theorem~3.3.2]{ginzburg-kapranov:DMJ94}.
The results of the first part, namely
Proposition~\ref{Pozvani_do_MSRI} and
Theorem~\ref{Pan_Hladky_zase_del8_problemy.} combined with 
\cite[Proposition~5.4]{loday11:_permut}, give the following wished-for
permutadic analog of the Fact above:  

\begin{theoremA}
The cellular chain complex of the
permutohedron\footnote{Sundry definitions of the permutohedron are
assembled in Appendix~2 to \cite{loday11:_permut}.} 
is the minimal model of the terminal permutad.
This terminal permutad is quadratic Koszul.
\end{theoremA}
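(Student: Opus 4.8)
The plan is to reproduce, for the terminal permutad $\term$, the argument by which the Fact above was established for the non-$\Sigma$ operad $\Ass$. The three cited ingredients play precisely the roles occupied there by the isomorphism $\D(\Ass)\cong\CC_*(\K)$, by the contractibility of the cells of $\K$, and by the Koszulity criterion of \cite[Prop.~2.6]{markl:zebrulka}. First I would invoke the quadratic theory of Part~1 to present $\term$ together with its dual bar construction $\D(\term)$, which is by construction free with a quadratic differential; the self-duality of $\term$ makes $\D(\term)$ the faithful permutadic analog of $\D(\Ass)$.

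The homological core is assembled as follows. Proposition~\ref{Pozvani_do_MSRI} furnishes the bridge, namely an explicit isomorphism of dg permutads between $\D(\term)$ and the cellular chain complex of the permutohedron. Since the permutohedron is a convex polytope, each of its arity-pieces is a contractible cell complex; transporting this contractibility, established in \cite[Proposition~5.4]{loday11:_permut}, across the isomorphism shows that $\D(\term)$ is acyclic in positive dimensions for every arity. Consequently the canonical augmentation $\D(\term)\to\term$ is a piece-wise homology isomorphism, and this exhibits $\D(\term)$---equivalently, the cellular chain complex of the permutohedron---as the minimal model of $\term$, which is the first assertion. For the second, recall that in the permutadic setting Koszulity is the acyclicity of the dual bar construction; the vanishing just obtained therefore yields at once that $\term$ is quadratic Koszul, in accordance with the permutadic power series test of Theorem~\ref{Pan_Hladky_zase_del8_problemy.}.

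I expect the genuine content to reside in the bridge isomorphism of Proposition~\ref{Pozvani_do_MSRI}. Unlike the associahedral case, the faces of the permutohedron are indexed by ordered set partitions, and both the incidence pattern and the signs of the cellular boundary map are markedly more intricate; matching this combinatorics, together with the grading and orientation conventions, against the generators and the quadratic differential of $\D(\term)$ is the delicate step. Once that identification is secured, the passage to positive-dimensional acyclicity, to the minimal model, and finally to Koszulity is formal, exactly as in the proof of the Fact.
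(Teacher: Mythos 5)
Your plan coincides with the paper's own argument: Theorem~A is assembled exactly from Proposition~\ref{Pozvani_do_MSRI}, the contractibility of the permutohedron (giving acyclicity of $\CC_*(P)$ in positive dimensions, hence Theorem~\ref{Pan_Hladky_zase_del8_problemy.}), and \cite[Proposition~5.4]{loday11:_permut}, and you correctly locate the genuine content in the bridge isomorphism. Two small corrections of bookkeeping rather than substance: the complex identified with $\CC_*(P)$ and carrying the canonical map to $\term$ is the dual bar construction of the Koszul \emph{dual}, $\D(\term^!)=\cobar({\term^!}^*)$, not $\D(\term)$ --- the two differ by arity-dependent suspensions, which is precisely why the self-duality $\term^!\cong\ss\term$ you invoke is needed --- and Theorem~\ref{Pan_Hladky_zase_del8_problemy.} is the Koszulity assertion itself, not the permutadic power-series test, which is a separate (and logically independent) proposition.
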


An important feature of the dual bar 
construction\footnote{I.e.\  the bar construction applied to the
linear dual.} $\D(A)$ of a
permutad $A$ introduced in Definition~\ref{Odhodlam_se_zitra_vyjet?}
is that it is again a permutad. This self-duality  
is not automatic. For instance, the dual bar
construction  of
a commutative associative algebra is a Lie algebra, the
dual bar construction of a modular operad is a twisted modular operad,
\&c.

An explanation of self-duality of permutads is offered
by~\cite[Definition~5]{dotsenko-khoroshkin} which presents them as
associative algebras in the category of (graded) vector spaces with an
unusual symmetric monoidal
structure~(\ref{Vcera_byla_slava_na_Narodni.}), 
while the self-duality of
associative algebras is classical. We however give an alternative
explanation that uses the theory of operadic categories, which are
the subject of 

\vskip .2em
\noindent 
{\bf Part~2.}
As shown in \cite[\S14.4]{Sydney}, there exists an operadic category
$\ttP$ such that permutads are algebras, in the
sense of~\cite[Definition~1.20]{duodel}, over the terminal
$\ttP$-operad $\Pterm$; we formulate this statement with  a slightly
different proof 
as Proposition~\ref{Treti_den_je_kriticky.}. The self-duality of the category of
permutads follows from
Theorem~14.4 of~\cite{Sydney} which says that the $\ttP$-operad
$\Pterm$ governing permutads 
is binary quadratic and self-dual. We will analyze this
phenomenon in the context of general operadic categories in our future work.
We complement these results by proving:

\begin{theoremB}
The terminal $\ttP$-operad $\Pterm$ is Koszul.
\end{theoremB}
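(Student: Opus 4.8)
The plan is to establish Koszulity in the classical manner: exhibit the dual bar construction of $\Pterm$ as its minimal model and deduce its acyclicity from the contractibility of the permutohedra — the mechanism behind the Fact of the Introduction, but carried out one categorical level higher, inside $\ttP$. The structural input is \cite[Theorem~14.4]{Sydney}, which asserts that the $\ttP$-operad $\Pterm$ is binary quadratic and self-dual; this fixes the Koszul-dual cooperad and hence the quadratic differential on the operadic dual bar construction $\D(\Pterm)$, a free dg $\ttP$-operad. Granting the definition of Koszulity for $\ttP$-operads from Part~2, it then suffices to show that the canonical augmentation $\D(\Pterm)\to\Pterm$ is a component-wise homology isomorphism, i.e.\ that each component $\D(\Pterm)(T)$, for $T$ an object of $\ttP$, has homology concentrated in degree zero.

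The core of the argument is the computation of these components. Unwinding the operadic bar construction over $\ttP$, I would present $\D(\Pterm)(T)$ as the complex spanned by chains of morphisms of $\ttP$ ending at $T$, graded by length, with differential the alternating sum of contractions. Using the explicit description of $\ttP$ from \cite[\S14.4]{Sydney}, the identification of permutads as $\Pterm$-algebras from Proposition~\ref{Treti_den_je_kriticky.}, and the operadic opfibration structure on $\ttP$, I would factor each such complex along the fibres above $T$ into tensor products of components of the dual bar construction $\D$ of the terminal \emph{permutad} of Part~1 (Definition~\ref{Odhodlam_se_zitra_vyjet?}). In effect, the operadic bar complex of $\Pterm$ is reassembled, component by component, from the permutadic bar complexes already analyzed in Part~1.

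With this reduction in hand, Part~1 finishes the proof. By Proposition~\ref{Pozvani_do_MSRI} and Theorem~\ref{Pan_Hladky_zase_del8_problemy.}, the dual bar construction of the terminal permutad is its minimal model: its components are the cellular chains of permutohedra, which are acyclic in positive degrees since the permutohedra are contractible. The K\"unneth theorem propagates this acyclicity through the tensor products and the sum over the fibres, so that every $\D(\Pterm)(T)$ is acyclic in positive degrees; hence $\D(\Pterm)\to\Pterm$ is a homology isomorphism, $\D(\Pterm)$ is the minimal model of $\Pterm$, and $\Pterm$ is Koszul.

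The hard part will be the factorization of the second paragraph: matching the operadic bar differential of $\Pterm$ — built from the contraction maps and the fibre functors of $\ttP$ — with the permutadic bar differential of Part~1, and checking that the opfibration distributes the tensor factors correctly over the fibres above each $T$. Once the two differentials are shown to agree under this decomposition, the contractibility input from Part~1 closes the argument with no further computation, just as $\D(\Ass)\cong\CC_*(\K)$ closed the classical case.
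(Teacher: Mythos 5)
Your overall strategy --- show that $\D(\Pterm^!)\to\Pterm$ is a quasi-isomorphism by computing the components of the dual dg operad and invoking contractibility of a polytope --- is the right shape, but the central step of your second paragraph is false, and it fails because it conflates the two categorical levels that the paper keeps apart. The dual bar construction of the terminal \emph{permutad} $\term$ (an \emph{algebra} over $\Pterm$, Part~1) has components $\D(\term^!)(\underline n)\cong\CC_*(P_{n+1})$, the cellular chains of the permutohedron. The dual dg operad of the terminal $\ttP$-\emph{operad} $\Pterm$ lives one level higher: since the cobar construction of a $\ttP$-cooperad is a free $\ttP$-operad, and free $\ttP$-operads on collections pulled back along $\des:\ttP\to\Ord$ are restrictions of free non-$\Sigma$ operads (Proposition~\ref{Leopold}), one gets $\D(\Pterm^!)\cong\des^*(\uD(\uAss^!))=\des^*(\ainf)$, whose component at $\alpha:\underline n\epi\underline k$ is $\ainf(k)\cong\CC_*(\K_k)$ --- the cellular chains of the \emph{associahedron} (this is exactly the ``hidden associahedron'' of the title, cf.~Remark~\ref{V_Koline_jsem_povesil_lustr.}). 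Your proposed factorization of $\D(\Pterm^!)(\alpha)$ into tensor products of permutohedral complexes over the fibres of $\alpha$ cannot hold: already for $|\alpha|=2$ with fibres $\underline n_1,\underline n_2$, the component $\D(\Pterm^!)(\alpha)$ is one-dimensional in degree $0$ (spanned by the single binary generator), whereas $\CC_*(P_{n_1+1})\ot\CC_*(P_{n_2+1})$ has dimension $n_1!\,n_2!$ and top degree $n_1+n_2-2$. So the ``hard part'' you flag is not merely hard; the two differentials you hope to match live on non-isomorphic complexes, and the Part~1 acyclicity input (contractibility of permutohedra) is simply not the relevant one here.

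The paper's actual proof sidesteps any component computation: it proves (Theorem~\ref{Je_teprve_utery_a_uz_padam_na_usta.}) that restriction along the discrete opfibration $\des:\ttP\to\Ord$ commutes with cobar constructions, linear duals and quadratic duality (Proposition~\ref{Lepim_si_L-410.}), and hence preserves and reflects Koszulity; since $\Pterm=\des^*(\uAss)$ and the Koszulity of $\uAss$ is classical (equivalently, the associahedra are contractible), $\Pterm$ is Koszul. If you want to repair your argument in its own terms, replace the claimed permutohedral factorization by the identification $\D(\Pterm^!)(\alpha)\cong\CC_*(\K_{|\alpha|})$ and quote contractibility of the associahedron instead; but at that point you have reproduced the paper's route. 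The self-duality of $\Pterm$ from \cite[Theorem~14.4]{Sydney} that you invoke at the outset is genuinely used only to know what $\Pterm^!$ is; it does not by itself produce the acyclicity.
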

As in the case of `classical' operads, Koszulity of  $\Pterm$ leads to
an effective description of its minimal model $\min$ which we give
in the proof of Theorem~\ref{Vcera_jsem_se_vratil_z_Ciny.}. 
Algebras over $\min$ are, due to the
philosophy pioneered in \cite{markl:zebrulka}, 
strongly homotopy permutads. Their explicit description in terms of
operations and axioms is
given in Proposition~\ref{Klasicka_poprijezdova_deprese}.

\vskip .2em
\noindent
{\bf Where the associahedron hides?}
The present article shall also illustrate the potential
of Grothendieck's construction in
operadic categories and the related discrete
opfibrations~\hbox{\cite[Subsect.~5.2]{Sydney}}. 
Let $\Ord$ be the operadic category of finite ordinals and their
order-preserving surjections; recall that $\Ord$-operads are ordinary
constant-free non-$\Sigma$-operads.
It turns out that the operadic category $\ttP$ is the
Grothendieck's construction applied to a~certain $\Ord$-cooperad $\oC$. 
One thus has a discrete opfibration
\hbox{$\des: \ttP \to \Ord$}. 

A consequence is that the restrictions  along $\des :\ttP \to \Ord$ of
free non-$\Sigma$-operads are free $\ttP$-operads.  In particular,
the restriction $\des^*(\K)$ of Stasheff's operad of the
associahedra turns out to be  the
convex polyhedral realization of the minimal model $\min$ of the
terminal $\ttP$-operad $\Pterm$, cf.~Remark~\ref{V_Koline_jsem_povesil_lustr.}.

In Theorem~\ref{Je_teprve_utery_a_uz_padam_na_usta.}
we prove that the restriction along 
\hbox{$\des: \ttP \to \Ord$} brings Koszul
\hbox{non-$\Sigma$} operads into Koszul $\ttP$-operads. This is a~particular
case of an important feature of opfibrations between operadic
categories, and an advertisement for our future work.

\vskip .5em
\noindent 
{\bf Conventions.}
Our background monoidal category will be the category $\Vect$ of
differential graded, or dg for short, vector
spaces $V = \bigoplus_{k \in {\mathbb Z}} V_k$ over a 
field $\bfk$ of characteristic $0$; the preferred degree of
differentials will be $-1$. The linear duals are taken
degree-wise, i.e.\ the degree $k$ component of the  
dual $V^*$ of a graded space above will be $\Vect(V_k, \bbk)$,
$k \in {\mathbb Z}$.  

If not stated
otherwise, all algebra-like objects (monoidal categories,
permutads) will be nonunital. Operadic categories, their operads and
algebras over these operads were introduced in
\cite[\S I.1]{duodel}.
The standard reference for `classical' operads, quadratic duality and
Koszulness is~\cite{ginzburg-kapranov:DMJ94} or more
recent~\cite{loday-vallette} or~\cite{markl-shnider-stasheff:book}.

The cobar construction of a linear dual of an algebra or permutad will
be called the dual bar construction, while the cobar construction
applied to the linear dual of an operad will be called,
following~\cite{ginzburg-kapranov:DMJ94}, the dual dg operad.

\vskip .3em
\noindent
{\bf Acknowledgment.} 
I wish to express my gratitude to Vladimir Dotsenko for presenting permutads 
in a broader context to me  and turning my attention
to~\cite{dotsenko-khoroshkin}. 
Also suggestions of the referee
lead to a substantial improvement of the paper.

\part{Classical approach to permutads}

\section{Permutads -- recollections}

Permutads appear under various names in various disguises. 
Following~\cite{dotsenko-khoroshkin} and~\cite{loday11:_permut} we recall 
some of them in this section. The most
concise definition uses the skeletal category $\Fin$ of finite
non-empty sets with objects the ordinals 
$\underline n  := \{\rada 1n\}$, $n \geq 1$, and its subcategory
$\Surj$ of surjections. For any order-preserving surjection $f: A \to
B$ of finite ordered sets there exists a unique $\alpha : \underline n
\to \underline k
\in \Surj$ in the commutative diagram
\begin{equation}
\label{S_Jarkou_do_Mnisku.}
\xymatrix@C=3.5em@R=2em{A\ar[r]^f\ar[d]_\cong &B\ar[d]^\cong
\\
\underline n \ar[r]^\alpha&  \underline k
}
\end{equation}
where the vertical arrows are order-preserving isomorphisms. In this
sense, every order-preserving surjection $f: A \to
B$ of finite ordered sets may be {\em interpreted\/} as a morphism in $\Surj$.
For $r : \underline m \epi \underline n \in \Surj$ and $i \in \underline n$ 
we denote by $\inv r (i)$ the pullback
\[
\xymatrix@C=3em{\inv r (i) \ar[r] \ar[d] & \underline m \ar[d]^r
\\
\underline 1 \ar[r]^{1\, \mapsto\, i} & \underline n
}
\]
in $\Fin$. Notice that $\inv r (i) = \underline c$, with
$c\geq 1$ the cardinality of the set-theoretic preimage of $i$ under~$r$.
We emphasize that  $\inv r (i)$ {\em does not\/} denote the set-theoretic
inverse image, but the categorial pull-back, and serves as an example
of a fiber in an operadic category. 
Permutads live in the category 
$\Coll$ of collections 
\[
A = \{A(\underline n) \in \Vect\ | \
\underline n \in \Fin\},
\] 
which we consider as a (non-unital) 
monoidal category with the 
shuffle product~\cite[Definition~1]{dotsenko-khoroshkin}
\begin{equation}
\label{Vcera_byla_slava_na_Narodni.}
(A \odot B)(\underline n) := \bigoplus_{r \in \Surj(\underline n, \underline
  2)} \big\{ A(\inv r (1)) \ot  A(\inv r (2)) \big\}.
\end{equation}

The coproducts of graded vector spaces obviously commute 
with $\odot$ from both sides,
which is the
property that, according to~\cite{barr}, guarantees that the monoidal
category $\Coll$ behaves in
most aspects as the standard monoidal category of graded vector spaces.
The following definition is taken from~\cite{dotsenko-khoroshkin}.

\begin{definition}
\label{Vcera_jsem_prijel_z_Luminy.}
A {\em permutad\/}, also called a {\em shuffle algebra\/}, is a monoid
for the monoidal product~\eqref{Vcera_byla_slava_na_Narodni.}.
\end{definition}

\begin{remark}
The term shuffle algebra comes from the fact that surjections 
$r$ in~\eqref{Vcera_byla_slava_na_Narodni.} 
are in one-to-one
correspondence with $(n_1,n_2)$-shuffles for some $n_1 \+ n_2 =
n$. Let us recall what shuffles are.
Given natural numbers  $\Rada n1k \geq 1$ such that $n_1 \+\cdots \+ n_k
=n$, an {\em $(\Rada n1k)$-shuffle \/}  is a permutation $\upsilon \in
\Sigma_n$ such that for any $j = \rada1k$ one has 
\begin{equation}
\label{karantena}
\upsilon(n_1\+ \cdots \+n_{j-1} + 1) < \upsilon(n_1\+ \cdots \+n_{j-1}
+ 2) < \cdots < \upsilon(n_1\+ \cdots \+n_j).
\end{equation} 
It is clear that $(\Rada n1k)$-shuffles are in one-to-one correspondence with 
ordered partitions
$[I_1|\cdots| I_k]$ of $\underline n = \{\rada 1n\}$ into $k$ disjoint nonempty
subsets, with $I_j$ being, for $1 \leq j \leq k$, the subset 
of $\underline n$ consisting of the
numbers in~\eqref{karantena}. When convenient, we use partitions to denote
shuffles. Thus, for  instance, $[2,4|1,3,6|5,7,8]$ 
denotes the $(2,3,3)$-shuffle
\[
(1,2,3,4,5,6,7,8) \longmapsto (2,4,1,3,6,5,7,8) \in \Sigma_8.
\]
Denoting by $\ush(\Rada n1k) \subset \Sigma_n$ the set of all $(\Rada
n1k)$-shuffles, one has the isomorphism
\begin{equation}
\label{okynko}
\Surj(\underline n, \underline k) \cong \bigsqcup_{n_1 +\cdots + n_k
=n} \ush(\Rada n1k) 
\end{equation}
where $\bigsqcup$ denotes the disjoint union, which
identifies $r \in \Surj(\underline n, \underline k)$ with the partition
$[I_1|\cdots| I_k]$ given by
\[
I_j := \{i \ |\  r(i) =j   \}, \ 1 \leq j \leq k.
\]
For the future use we denote
\begin{equation}
\label{Za_chvili_volam_Jarce.}
r_j = \card  \{i \ |\  r(i) =j   \},  \ \hbox { and }\
\varepsilon(r) := \sgn(\upsilon_r) \in \{-1,+1\},\ 1 \leq j \leq k,
\end{equation}
where $\upsilon_r \in \Sigma_n$ is the shuffle corresponding to $r$ in the
correspondence~\eqref{okynko}. 
\end{remark}

We are going to give two alternative, explicit definitions of
permutads. The first one emphasizes their combinatorial nature, the
second one is tailored for the purposes of our paper. Let us
introduce necessary auxiliary notation.
We will need
\[
\pi_{(1,2+3)}:  \ush(n_1,n_2,n_3) \longrightarrow  \ush(n_1,n_2\+n_3)
\]
given by
$\pi_{(1,2+3)}([I_1|I_2|I_3]) := [I_1|I_2 \cup I_3]$, and the
map 
\[
\pi_{(1+2,3)}:  \ush(n_1,n_2,n_3) \longrightarrow  \ush(n_1\+ n_2,n_3)
\]
given by the obvious similar formula. We also define the map
\[
\pi_{(1,2)}:  \ush(n_1,n_2,n_3) \longrightarrow  \ush(n_1,n_2)
\]
by $\pi_{(1,2)}([I_1|I_2|I_3]) := [\stan(I_1)| \stan(I_2)]$, where the
standardization $\stan : I_1 \cup I_2 \to \{1,\ldots,n_1\+n_2\}$ is the unique
monotonous map. The map 
\[
\pi_{(2,3)}:  \ush(n_1,n_2,n_3) \longrightarrow
\ush(n_2,n_3)
\]  
is likewise given by  
$\pi_{(2,3)}([I_1|I_2|I_3]) := [\stan(I_2)| \stan(I_3)]$. 
For example
\begin{align*}
\pi_{(1,2+3)}([2,4|1,3,6|5,7,8]) &= [2,4|1,3,5,6,7,8],  
\\
\pi_{(1+2,3)}([2,4|1,3,6|5,7,8]) &= [1,2,3,4,6|5,7,8],  
\\
\pi_{(1,2)}([2,4|1,3,6|5,7,8]) &= [2,4|1,3,5], \hbox { and}
\\
\pi_{(2,3)}([2,4|1,3,6|5,7,8]) &= [1,2,4|3,5,6].    
\end{align*}
Notice that the above maps induce isomorphisms in the diagram
\begin{equation}
\label{Konci mi karantena.}
\xymatrix{
& \ush(n_1,n_2,n_3)
\ar[rd]^{\ \ \ \ \  \pi_{(1,2)} \times \pi_{(1+2,3)}}_\cong
\ar[ld]_{ \pi_{(2,3)} \times \pi_{(1,2+3)} \ \ \ \ \  }^\cong&
\\
\ush(n_2,n_3) \times \ush(n_1,n_2\+n_3)
&&\ \ush(n_1,n_2) \times \ush(n_1 \+n_2,n_3).
}
\end{equation}
We are ready to give the following explicit

\begin{definition}
\label{Vcera jsem pichnul dusi.}
A {\em permutad\/} is a collection $A \in \Coll$ together with operations
\begin{equation}
\label{Zase jsem to poplet s tim casopisem.}
\bullet_{\tau} : A({\underline n}_{\, 1}) \ot A(\underline n_{\, 2}) 
\longrightarrow A(\underline n)
\end{equation}  
given for each shuffle $\tau \in
\ush(n_1,n_2)$ with $n_1,n_1 \geq 1$, $n=n_1 \+ n_2$.
These operations are required to satisfy 
\[
 \bullet_\gamma(\id \ot \bullet_\delta) =
 \bullet_\lambda(\bullet_\sigma \ot \id)
 \tag{$\shufflerm$}
\]
for each $\shufflerm \in \ush(n_1,n_2,n_3)$, $n_1,n_2,n_3 \geq 1$,  
with   $\gamma = \pi_{(1,2+3)}(\shufflerm)$,
$\delta = \pi_{(2,3)}(\shufflerm)$,
$\lambda = \pi_{(1+2,3)}(\shufflerm)$ and
$\sigma = \pi_{(1,2)}(\shufflerm)$. 
\end{definition}

In elements, equation ($\shufflerm$) can of course be written as 
\begin{equation}
\label{Zitra se vykoupu.}
a \bullet_\gamma (b \bullet_\delta c) = (a \bullet_\sigma b) \bullet_\lambda c,
\end{equation}
for $a \in  A({\underline n}_{\, 1})$,
$b \in  A({\underline n}_{\, 2})$ and
$c \in  A({\underline n}_{\, 3})$.
To rewrite Definition~\ref{Vcera jsem pichnul dusi.} into 
the form closer to~\hbox{\cite{loday11:_permut}}, we put, 
using the isomorphisms in~\eqref{Konci mi karantena.},
\begin{equation}
\label{Projdu prohlidkou?}
(\id \times \delta) \cdot \gamma :=  
[\pi_{(2,3)} \times \pi_{(1,2+3)}]^{-1}(\delta \times \gamma),\ 
(\sigma \times \id) \cdot \lambda 
 : = [\pi_{(1,2)} \times \pi_{(1+2,3)}]^{-1}(\sigma \times \lambda).
\end{equation}
Shuffle algebras then appear as structures with operations~\eqref{Zase
  jsem to poplet s tim casopisem.} that satisfy (\shufflerm) for each
$\delta,\gamma,\sigma,\lambda$ such that $(\id \times \delta) \cdot
\gamma = (\sigma \times \id) \cdot \lambda$ in
$\ush(n_1,n_2,n_3)$. This is precisely the
definition given in~\hbox{\cite[Subsect.~3.4]{loday11:_permut}}.

\begin{example}
Assume that 
 $A = \bigoplus_{n \geq 1}A_n$ is a graded associative algebra 
with a product $\bullet: A \ot A \to A$. If we define 
\[
A(\underline n) := A_n, \ n \geq 1,\  \hbox { and } \
\bullet_\tau := \bullet \ \hbox { for each shuffle $\tau$,}
\]
then~\eqref{Zitra se vykoupu.} follows from the associativity of
the product. Thus graded associative algebras provide 
examples of permutads. Other examples can be found 
in~\cite[Subsect.~2.2]{dotsenko-khoroshkin} or in the second half of~\cite{loday11:_permut}.
\end{example}

Let us translate Definition~\ref{Vcera jsem pichnul dusi.} into the
language of surjections.  
For surjections $t : \underline m \to \underline 2$ and 
$s : \inv t(2) \to \underline 2$ we denote by $t(\id \times s ) :
\underline m \to \underline 3$ the surjection defined by the
commutativity of the diagrams
\begin{equation}
\label{Zitra jdu na prohlidku.}
\xymatrix@C=3em{\inv t (1) \ar[r] \ar[d] & \underline m \ar[d]^{t(\id
    \times s)}
\\
\ \underline 1\  \ar@{^{(}->}[r]^{\iota_1}& \underline 3
}
\raisebox{-1.8 em}{\hbox {\ \  and\ \ } }
\xymatrix@C=3em{\inv t (2) \ar[r] \ar[d]_s & \underline m \ar[d]^{t(\id
    \times s)}
\\
\ \underline 2\  \ar@{^{(}->}[r]^{\iota_{2,3}}& \underline 3
}
\end{equation}
where $\iota_1(1) := 1$, $\iota_{2,3}(1) : = 2$ and $\iota_{2,3}(2) :=
3$. More explicit, though less
conceptual, description of  $t(\id \times s )\in \Surj(\underline m,
\underline 3)$, goes as follows. 
For $t: \{\rada 1m\} \to \{1,2\}$, let \hbox{$g_1 < \cdots <
g_a$} be numbers such that 
\[
\{\Rada g1a\} = \{j \ |\ t(j) = 2\}  \subset \{\rada 1m\},
\]
so that  $t^{-1}(2) = \underline a$ and   
$s : \{\rada 1a\} \to \{1,2\}$. Then $t(\id \times s ) :  \{\rada
1m\}  \to \{1,2,3\}$ is given by
\[
t(\id \times s )(j) := 
\begin{cases}
1, & \hbox {if $j \not\in  \{\Rada g1a\}$},
\\
2, & \hbox {if $j = g_k$ for some $k$ with $s(k) = 1$, and}
\\
3, & \hbox {if $j = g_k$ for some $k$ with $s(k) = 2$.}
\end{cases}
\]
Analogously we define, for $u \in
\Surj(\underline m,\underline 2)$ and 
$v : \inv u(1) \to \underline 2$, the surjection $u(v \times \id) \in \Surj(\underline m,\underline 3)$ as follows.
Let $h_1 < \cdots < h_b$ be such that 
\[
\{\Rada h1b\} = \{j \ |\ u(j) = 1\}  \subset \{\rada 1m\},
\]
so $v : \{\rada 1b\} \to \{1,2\}$. Then
\[
u(v \times \id)(j) := 
\begin{cases}
1, & \hbox {if $j = h_k$ for some $k$ with $v(k) = 1$.} 
\\ 
2, & \hbox {if $j = h_k$ for some $k$ with $v(k) = 2$, and}
\\
3, & \hbox {if $j \not\in  \{\Rada h1b\}$}.
\end{cases}
\]
We leave as an exercise to express $u(v \times \id)$ in the diagrammatic
language of~\eqref{Zitra jdu na prohlidku.}. 

If the surjection $t$ above corresponds, under 
isomorphism~\eqref{okynko}, to a shuffle $\gamma$,
$s$ corresponds to $\delta$, $u$ to $\lambda$ and $v$ to $\sigma$, then  
$t(\id \times s)$ corresponds to $(\id \times \delta) \cdot \gamma$ 
in~\eqref{Projdu prohlidkou?},
and $u(v \times \id)$ to $(\sigma \times \id)\cdot \lambda$.
With these definitions at hand we may formulate the following

\begin{definition}
\label{V_nedeli_letim_do_Moskvy.}
A {\em permutad\/} is a collection $A \in \Coll$ together with operations
\begin{equation}
\label{Pozitri_jedem_do_Zvanovic.}
\pcirc_r : A(\inv r(1)) \ot  A(\inv r(2)) \longrightarrow A(\underline n)
\end{equation}  
defined 
for each surjection $r \in \Surj(\underline n,\underline 2)$.
These operations shall satisfy 
\begin{equation}
\label{Vratim_se_z_te_Ciny?}
\pcirc_u(\pcirc_v \ot \id) = \pcirc_t(\id \ot \pcirc_s)
\end{equation}
for each $m \geq 1$ and surjections $u,t : \underline m \to \underline
2$,  $s : \inv t(1) \to \underline 2$ and $v : \inv u(2) \to \underline 2$ 
such that 
\[
t(\id \times s) = u(v \times \id).
\]
\end{definition}

Definitions~\ref{Vcera jsem pichnul dusi.} 
and~\ref{V_nedeli_letim_do_Moskvy.} suggest that permutads are
close in nature to operads. One thus sometimes uses, as
in \cite{loday11:_permut},
a shifted grading 
$A_{n+1} := A(\underline n)$, $n \geq 1$. The underlying 
collection $A_2,A_3,\ldots$ of a permutad is then the same as the
underlying collection of
a $1$-connected non-unital non-symmetric \hbox{(non-$\Sigma$)}~operad.
 
Let us  recall from \cite{loday11:_permut} a monadic definition of
permutads. For a~collection $B \in \Coll$, define $\PP(B) \in \Coll$ 
by $\PP(B) :=
\bigoplus_{k \geq 1} \PP^k(B)$, where
\begin{equation}
\label{Zitra_jedu_do_Zvanovic.}
\PP^k(B)(\underline n) :=
\bigoplus_{r \in
  \Surj(\underline n,\underline   k)} \bigotimes_{i\in \underline k}
B(\inv r(i)).
\end{equation}

\begin{remark}
Assume that $A$ is a permutad as in Definition \ref{V_nedeli_letim_do_Moskvy.}.
Since clearly $\PP^1(A) \cong A$ and
\[
\PP^2(A) \cong \bigoplus_{r \in
  \Surj(\underline n,\underline   k)} A(\inv r(1)) \ot  A(\inv
r(2)),
\] 
structure operations~\eqref{Pozitri_jedem_do_Zvanovic.}
assemble~into one map
\begin{equation}
\label{eq:Petr_v_Praze}
\pcirc :  \PP^2(A) \longrightarrow \PP^1(A). 
\end{equation}
\end{remark}

As shown in~\cite[Section~1.6]{loday11:_permut}, 
there exists a natural morphism of collections
\begin{equation}
\label{Pujdu_si_nakoupit.}
\Gamma_B : \PP(\PP(B)) \longrightarrow \PP(B), \ B \in \Coll,
\end{equation}
which, together with the obvious inclusion $\iota_B : B
\hookrightarrow \PP(B)$, makes $\PP(-)$ a monad in $\Coll$. One has
the fourth

\begin{definition}
\label{Posledni_moznost_jet_na_kole_a_ja_mam_chripku.}
A {\em permutad\/} is an algebra for the monad $\PP: \Coll \to \Coll$.
\end{definition}

As a consequence of classical statements, see
e.g.~\cite[Theorem~VI.2.1]{maclane:71}, $\PP(B)$ is the {\em free
permutad\/} generated by $B$. 

\begin{example}\hskip -.5em\footnote{Quoting a Czech physicist: ``Old men give
    only good examples, since they are unable to give bad ones.''}
\label{Byl_jsem_si_obehnout_dvorek.}
An important r\^ole will be featured by the permutad $\term$ with
\hbox{$\term(\underline n) := \bfk$} for all $\underline n \in \Fin$, with all
operations~\eqref{Zase jsem to poplet s tim casopisem.} the canonical
isomorphisms $\bbk \ot \bbk \cong \bbk$. It is the linearization of 
the terminal permutad in the
category $\Set$ of sets,
so we will call it, being aware of slight abuse of terminology, the
{\em terminal permutad\/}.
\end{example}

\begin{example}
\label{Za_tyden_jedu_na_Zimni_skolu.}
The permutohedron $P_{n+1}$ is, for $n \geq 1$, a convex polytope whose 
faces are labelled by planar rooted trees with levels, see
e.g.~\cite{tonks97} or \cite[Appendix~2]{loday11:_permut}. Its  cellular chain complex
\[
\CC_*(P) = \{\CC_*(P_{n+1})\ | \
\underline n \in \Fin\}
\] 
is a permutad in the category of dg vector
spaces whose underlying permutad is free, generated by the collection
$B$ which has
\begin{equation}
\label{Je_streda_a_melu_z_posledniho.}
B(\underline n)   :=  \Span(c_{n-1}) \cong  \ \uparrow^{n-1} \bbk,\ n \geq 1,
\end{equation}
the ground field $\bbk$ placed in degree $n\!-\!1$, 
cf.~\cite[Theorem~5.3]{loday11:_permut}. In~\eqref{Je_streda_a_melu_z_posledniho.},
$c_{n-1} := \uparrow^{n-1}\! 1$, $1 \in \bbk$, is the generator of
$\uparrow^{n-1} \bbk$.   Indeed,
formula~\eqref{Zitra_jedu_do_Zvanovic.} gives
\[
\PP(B)(\underline n) \cong \bigoplus_{1 \leq k \leq n} \  \bigoplus_{r \in
  \Surj(\underline n,\underline k)}
\hskip -.5em
\Span(
c_{r_1-1} \ot \cdots \ot c_{r_k-1}) 
\cong
\bigoplus_{1 \leq k \leq n} \  \bigoplus_{r \in
  \Surj(\underline n,\underline k)}
\uparrow^{n-k} \bfk,
\]
where $r_i$ is the cardinality of the set-theoretic preimage of $i \in
\underline k$ via $r : \underline n \epi \underline k$.
The set
$\Surj(\underline n,\underline k)$ is isomorphic to the set of
ordered decompositions of $\{\rada 1n\}$ into $k$ disjoint non-empty
subsets which is, in turn, isomorphic to the set of planar rooted
trees with $n\+1$ leaves and $k$ levels, see
e.g.~\cite[\S1.3]{biperm}. 

Explicitly, the isomorphism $\CC_*(P) \cong \PP(B)$
identifies the `big' $n$-dimensional cell of $P_n$ with the generator
$c_n$ of $\PP(B)$, $n \geq 0$. A formula for the boundary operator
of $\CC_*(P)$ written in terms of $\PP(B)$ can be found in
\cite[\S9.3]{loday11:_permut}. We will return to the permutohedron in
Proposition~\ref{Pozvani_do_MSRI}. 
\end{example}

\begin{example}
Each permutad $A$ admits a {\em permutadic suspension\/} $\ss A$. Its
underlying collection is $\ss A(\underline n) : =
\hbox{$\uparrow^n \hskip -.3em A(\underline n)$}$, $n \geq 1$, and the structure operations~\eqref{Pozitri_jedem_do_Zvanovic.}
\[
\sspcirc :\ss A(\inv r(1)) \ot \ss A(\inv r(2)) \longrightarrow  \ss
A(\underline n),
\ r \in \Surj(\underline n,\underline 2),
\]
are defined by
\[
(\uparrow^{r_1} \hskip -.3em a_1) \, \sspcirc  (\uparrow^{r_2}\hskip
-.3em a_2) : =     
\varepsilon(r) \cdot (-1)^{r_2(r_1 + \deg(a_1))}  \uparrow^{n}   \hskip -.3em
\hbox{$(a_1\, \pcirc_r\, a_2)$},
\]
in which $\pcirc_r$ is the structure
operation of $A$, $a_i \in A(\inv
r(i))$, $i = 1,2$, and $r_1, r_2$ and $\varepsilon(r)$ 
are as in~\eqref{Za_chvili_volam_Jarce.}.
\end{example}

\begin{remark}
There exist symmetric versions
of permutads, called {\em twisted associative algebras\/}, whose
underlying collections bear actions of the symmetric groups,
see~\cite[Chapter~4]{BrDo} for their definition. 
Koszul duality for these objects is treated in a recent
preprint~\cite{Kho}. We believe that the methods developed in the present
article can be easily modified also to twisted associative algebras.
\end{remark}

\section{Koszul duals of permutads}
\label{2+3}

Most of the notions in this section and in the first half of 
Section~\ref{Pristi_tyden_mam_spoustu_uradovani.}
are straightforward translations of the analogous standard 
notions for associative algebras
given by replacing the usual ambient monoidal structure of
vector spaces by the shuffle
product~(\ref{Vcera_byla_slava_na_Narodni.}). Our preferred form of
structure operations of
permutads will be that in
Definition~\ref{V_nedeli_letim_do_Moskvy.} based on surjections.
Let us start by recalling 
the following definition of \cite[\S4.4]{loday11:_permut}.

\begin{definition}
\label{Zavola_Petr?}
A permutad $A$ is {\em quadratic\/} if it is of the form $\PP(B)/(S)$,
where $(S)$ is the permutadic ideal generated by a subspace $S
\subset \PP^2(B)$. Such a quadratic permutad $A$ is {\em binary\/},
if  $B(\underline n) = 0$
for $n \not= 1$.  
\end{definition}

\begin{example}
\label{Zitra_sraz_s_Jitkou_v_ustavu.}
Let us denote, abusing the notation again, by $\PP(\mu)$ the free
permutad generated by one generator $\mu \in
\PP(\mu)(\underline 1)$. As
stated in \cite[Corollary~4.7]{loday11:_permut}, the terminal permutad
$\term$ recalled in Example~\ref{Byl_jsem_si_obehnout_dvorek.} is binary
quadratic, 
\begin{equation}
\label{Mam_pozvani_do_Bonnu_na_kveten.}
\term \cong \PP(\mu)/(S), 
\end{equation}
with $S \subset \PP^2(\mu) = \PP(\mu)(\underline 2)$ the $\bbk$-linear span
\[
S := \Span\big\{
\pcirc_{\id}(\mu \ot \mu) - \pcirc_{(21)}(\mu \ot \mu) \big\},
\] 
where $\id : \underline 2 \to \underline 2$ the identity and  $(21)
: \underline 2 \to \underline 2$ the transposition $1,2 \mapsto
2,1$. The verification of~\eqref{Mam_pozvani_do_Bonnu_na_kveten.} is simple. 
Formula~\eqref{Zitra_jedu_do_Zvanovic.} immediately
gives that
\[
\PP(\mu)(\underline n) \cong \Span (\Sigma_n),
\]
the $\bbk$-linear span of the symmetric group of $n$ elements, while
modding out by the ideal~$(S)$ identifies any two permutations in
$\Sigma_n$ that differ
by transposition of adjacent elements. Since transpositions act
transitively, one gets  $\PP(\mu)/(S)(\underline n) \cong \bfk \cong
\term(\underline n)$, $n \geq 1$, as~expected.
\end{example}

Every binary quadratic permutad $A$ as in Definition~\ref{Zavola_Petr?} 
possess its {\em Koszul dual\/} 
$A^!$, which is a binary quadratic permutad defined as
\[
A^!:=\PP(\susp B^*)/(S^\perp), 
\]
where $\susp B^*$ is the suspension of the  component-wise linear 
dual of the generating
collection~$B$, and $S^\perp \subset \PP^2(\susp B^*)$ is the annihilator
of $S \subset \PP^2(B)$ with respect to the obvious natural degree~$-2$ pairing between
\[
\PP^2(\susp B^*) = \bigoplus_{r \in
  \Surj(\underline 2,\underline   2)}
\hskip  -.5em
 \susp B(\underline 1)^*\, \ot  
\susp B(\underline 1)^*
\cong \
\susp^2
\bigoplus_{\sigma \in
\Sigma_2}
 B(\underline 1)^*\, \ot  
 B(\underline 1)^*
\]
and
\[
\PP^2( B) = \bigoplus_{r \in
\Surj(\underline 2,\underline   2)} 
\hskip  -.5em
B(\underline 1)\, \ot  
B(\underline 1)
\cong
\bigoplus_{\sigma \in
\Sigma_2}
 B(\underline 1)\, \ot  
B(\underline 1)
\]

\begin{proposition}
\label{Dnes_vecer_prijde_Jarka.}
The terminal permutad $\term$ is Koszul self-dual in the sense that
its Koszul dual $\term^!$ is isomorphic to its  permutadic suspension  
$\ss\term$. Explicitly,
\[
\term^!(\underline n) \cong\ \susp^{\hskip .15em n}\,\bfk,\ n \geq 1,
\] 
with the structure operation~\eqref{Pozitri_jedem_do_Zvanovic.}
the composition
\[
\term^!(\underline r_1) \ot \term^!(\underline r_2) \cong
\hbox{$\susp^{\hskip .15em r_1} \hskip .1em \bbk$} \ot \hbox{$\susp^{\hskip .15em r_2} 
\hskip .1em \bbk$}\hskip .1em \cong\ \hbox{$\susp^{\hskip .15em
  n}\,\bbk$}
\cong \term^!(\underline n)
\]
multiplied with $\varepsilon(r) \in \{-1,+1\}$, where $r_1$, $r_2$ and
$\varepsilon(r)$ have the same meaning as in~\eqref{Za_chvili_volam_Jarce.}.
\end{proposition}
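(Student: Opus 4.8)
The plan is to compute $\term^!$ directly from the binary quadratic presentation $\term \cong \PP(\mu)/(S)$ of Example~\ref{Zitra_sraz_s_Jitkou_v_ustavu.} and then to match it, arity by arity, with the permutadic suspension $\ss\term$. Since the generator $\mu = B(\underline 1)$ sits in degree~$0$, the dual generator $\nu := \susp\mu^* = \susp B^*(\underline 1)$ sits in degree~$1$, and formula~\eqref{Zitra_jedu_do_Zvanovic.} gives, exactly as in Example~\ref{Zitra_sraz_s_Jitkou_v_ustavu.}, that $\PP(\susp B^*)(\underline n) = \PP^n(\susp B^*)(\underline n) \cong \Span(\Sigma_n)$, now concentrated in degree~$n$. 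I write $f_r$ for the monomial indexed by $r$; in arity~$2$ the two basis monomials $f_\id, f_{(21)}$ are dual, under the degree~$-2$ pairing, to $e_\id = \pcirc_\id(\mu\ot\mu)$ and $e_{(21)} = \pcirc_{(21)}(\mu\ot\mu)$ spanning $\PP^2(B)(\underline 2)$.

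First I would determine the quadratic relation $S^\perp$. The pairing is diagonal with respect to the two summands indexed by $\Surj(\underline 2,\underline 2) = \Sigma_2$, and within each summand it is $\langle\susp\mu^*\ot\susp\mu^*,\,\mu\ot\mu\rangle$; as the Koszul sign incurred when carrying the inner $\susp\mu^*$ (degree~$1$) past $\mu$ (degree~$0$) is trivial, the pairing has the same sign on both summands. Hence the annihilator of $S = \Span\{e_\id - e_{(21)}\}$ is
\[
S^\perp = \Span\{f_\id + f_{(21)}\},
\]
so that in $\term^!$ the defining relation reads $\pcirc_{(21)}(\nu\ot\nu) = -\,\pcirc_\id(\nu\ot\nu)$. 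The crucial point is the change of sign relative to $\term$: where $S$ equated the two products, $S^\perp$ equates them up to the factor $-1$.

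Next I would run the reduction of Example~\ref{Zitra_sraz_s_Jitkou_v_ustavu.} inside the quotient $\term^! = \PP(\nu)/(S^\perp)$. Applying the quadratic relation in all admissible positions of a top monomial shows that $f_r$ and $f_{r'}$ are identified up to a sign $-1$ whenever $r,r' \in \Sigma_n$ differ by an adjacent transposition; since adjacent transpositions generate $\Sigma_n$ and act transitively, every $f_r$ reduces to $\pm f_\id$ and $\term^!(\underline n) \cong \bfk$ in degree~$n$, that is $\term^!(\underline n) \cong \uparrow^n\!\bfk$. Because $\PP(\nu)/(S^\perp)$ is a genuine permutad, the accumulated sign is independent of the chosen reduction, so it defines a homomorphic sign on $\Sigma_n$ sending each adjacent transposition to $-1$, namely $\sgn$. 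Reading off the induced operation $\pcirc_r$ of~\eqref{Pozitri_jedem_do_Zvanovic.} on the one-dimensional pieces: applying it to the reference (sorted) generators produces the monomial indexed by the shuffle $\upsilon_r$, whose reduction sign is $\sgn(\upsilon_r) = \varepsilon(r)$. This is exactly the explicit description of the structure operation claimed in the Proposition, the remaining factor $(-1)^{r_1 r_2}$ being the Koszul sign built into the identification $\uparrow^{r_1}\!\bbk \ot \uparrow^{r_2}\!\bbk \cong \uparrow^n\!\bbk$.

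Finally I would compare with $\ss\term$. Its underlying collection is $\ss\term(\underline n) = \uparrow^n\!\term(\underline n) = \uparrow^n\!\bfk$, matching $\term^!$; and substituting $a_1 = a_2 = 1 \in \bfk$ (so $\deg a_1 = 0$ and $a_1\pcirc_r a_2 = 1$) into the defining formula of $\sspcirc$ returns $\varepsilon(r)\,(-1)^{r_1 r_2}\uparrow^n\! 1$, which is precisely the operation just computed for $\term^!$; here the suspension sign $(-1)^{r_2(r_1+\deg a_1)}$ is the Koszul sign of the canonical isomorphism. The identity on underlying collections thus upgrades to an isomorphism of permutads $\term^! \cong \ss\term$. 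The main obstacle throughout is the sign bookkeeping: establishing the passage from the ``$-$'' in $S$ to the ``$+$'' in $S^\perp$, and verifying that the relation signs and the Koszul suspension signs combine to the single factor $\varepsilon(r)$ coherently in every arity --- path-independence being guaranteed \emph{a priori} by the permutad axioms, but the propagation of the arity-$2$ relation to all adjacent transpositions still requiring the transitivity argument.
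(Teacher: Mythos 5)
Your proposal is correct and follows essentially the same route as the paper: identify $\PP(\susp B^*)(\underline n)\cong\ \uparrow^n\Span(\Sigma_n)$, compute $S^\perp=\Span\{f_{\id}+f_{(21)}\}$, observe that the quotient identifies permutations differing by an adjacent transposition up to a sign $-1$, and read off the factor $\varepsilon(r)$ on the structure operations. The only cosmetic difference is that you spell out the sign bookkeeping and the final comparison with $\ss\term$ which the paper delegates to the description of the free permutad in Loday--Ronco; note only that the consistency of the accumulated sign is most cleanly justified by exhibiting the surjection $f_r\mapsto \sgn(r)\uparrow^n\!1$ killing $(S^\perp)$, rather than by appealing to the quotient being a permutad.
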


\begin{proof}
By the definition of the Koszul dual, one has
\[
\term^! \cong \PP(\mu^\uparrow)/(S^\perp), 
\]
where  $\mu^\uparrow \in \PP(\mu^\uparrow)(\underline 1)$ is a degree
$1$ generator and $S^\perp$ the span
\[
\Span\{
\pcirc_{\id}(\mu^\uparrow \ot \mu^\uparrow) +
\pcirc_{(21)}(\mu^\uparrow \ot \mu^\uparrow)\}.
\] 
By formula~\eqref{Zitra_jedu_do_Zvanovic.},
\[
\PP(\mu^\uparrow)(\underline n) \cong \  \uparrow^n    \Span (\Sigma_n),
\]
while
modding out by the ideal $(S^\perp)$ introduces the relation 
$\sigma' \sim -\sigma''$ for each 
$\sigma' \in \Sigma_n$ and $\sigma''$ obtained from $\sigma'$ by
a transposition of two adjacent elements. Therefore the assignment  
\[
\uparrow^n \bfk \ni \hskip .1em \uparrow^n \! 1 \longmapsto \id_n \in \Sigma_n
\]
leads to an isomorphism
$\uparrow^n\! \bfk \cong \PP(\mu^\uparrow)/(S^\perp)(\underline n)$.
The advertised formula for the structure operations easily follows from the
description of the operations in the free permutad $\PP(\mu^\uparrow)$
given in~\hbox{\cite[Section~1.6]{loday11:_permut}}.
\end{proof}

\section{Koszulity of permutads}
\label{Pristi_tyden_mam_spoustu_uradovani.}

We start this section with a permutadic version of the cobar
construction and the related dual bar construction.
We then establish that the dual bar
construction of the Koszul dual $\term^!$ of the terminal permutad $\term$ is isomorphic
to the cellular chain complex of the permutohedron. Finally, we prove
that $\term$ is Koszul and formulate a test for Koszulity of
permutads. The first definition is a 
harmless formal dual of Definition~\ref{V_nedeli_letim_do_Moskvy.}.

\begin{definition}
A {\em copermutad\/} is a collection  $C \in \Coll$ together with operations
\begin{equation}
\label{Strasliva_zima.}
\delta_r : C(\underline n) \longrightarrow C(\inv r(1))\! \ot\!
C(\inv r(2)),\
r \in \Surj(\underline n,\underline 2), \ n \geq 2,
\end{equation}  
satisfying the obvious dual versions of axioms~\eqref{Vratim_se_z_te_Ciny?}.
\end{definition}

\begin{example}
\label{Pisu_v_Koline_po_skoleni_ze_stavby.}
Assume that $A$ is a permutad such that $A(\underline n)$ is, for each
\hbox{$n \geq 1$}, either
finite-dimensional, or non-negatively or non-positively graded 
vector space of finite type.  Then its component-wise 
linear dual $A^* = \coll {A^*}n$
is a copermutad. This is in particular satisfied 
if $A$ is binary quadratic as in
Definition~\ref{Zavola_Petr?}, with $B(1)$ finite-dimensional.
\end{example}

\begin{definition}
A degree $s$ {\em derivation\/} of a permutad $A$ is a degree $s$ linear map $\varsigma
: A \to A$ of collections such that
\[
\varsigma\,\pcirc_r = \pcirc_r (\varsigma \ot \id) +  \pcirc_r(\id \ot
\varsigma),\
r \in \Surj(\underline n,\underline 2),\ n \geq 2,
\]
for the structure operations $\pcirc_r$
in~\eqref{Pozitri_jedem_do_Zvanovic.}.
In elements,
\[
\varsigma(a_1 \pcirc_r a_2) = \varsigma (a_1)\,  \pcirc_r\, a_2 +
(-1)^{s\cdot \deg(a_1)}
a_1\, \pcirc_r\, \varsigma(a_2), \ a_i \in A(\inv r(i)),\ i =1,2.
\]
\end{definition}

One easily verifies that each 
degree $s$ map of collections $\sigma :B \to \PP(B)$ uniquely
extends to a degree $s$ derivation $\varsigma$  of the free permutad
$\PP(B)$ satisfying $\varsigma|_B = \sigma$.
Let $C$ be a copermutad. 
Notice that~\eqref{Strasliva_zima.}
assemble into one map
\[
\delta : C \cong \PP(C) \longrightarrow \PP^2(C)
\]
thus one has a degree $-1$ map $\sigma :\ \desusp C  \to  \PP^2(\desusp C)$
defined as the composition
\[
\sigma :=\
\desusp C \stackrel \uparrow\longrightarrow C
\stackrel\delta\longrightarrow
\PP^2(C) \stackrel\cong\longrightarrow \,\uparrow^2 \PP^2(\desusp C)
\stackrel{\downarrow^2}\longrightarrow \PP^2(\desusp C)
\hookrightarrow  \PP(\desusp C)
\]
with \hbox{$\PP^2(C) \stackrel\cong\longrightarrow \,\uparrow^2\!
  \PP^2(\desusp C)$}  the
obvious canonical isomorphism.
Denote finally 
by $\pa_\cobar$ the unique extension of $\sigma$ into a degree $-1$
derivation of \hbox{$\PP(\desusp \hskip -.05em C)$}. 
One may verify by direct calculation that $\pa_\cobar^{\,2} =
0$, thus the following definition makes sense.

\begin{definition}
\label{Odhodlam_se_zitra_vyjet?}
The {\em cobar construction\/}  of a
copermutad $C$  is the differential graded (dg) 
permutad ${\cobar} (C) := (\PP(\desusp C),\pa_\cobar)$. The {\em dual bar
  construction\/} of a permutad $A$ satisfying the assumptions of
Example~\ref{Pisu_v_Koline_po_skoleni_ze_stavby.} is the dg permutad
$\D(A) := \cobar(A^*)$.
\end{definition}

\noindent
{\bf Convention.} From now on we will assume 
that $A$ is a binary quadratic permutad as in
Definition~\ref{Zavola_Petr?}, with $B(1)$ finite
dimensional.

Under the above assumption, one is allowed to define the Koszul dual {\em
  copermutad\/} by
\hbox{$A^\antishriek: = {A^!}^*$}, the
component-wise linear dual. With this notation, $\D(A^!) =
\cobar(A^\antishriek)$. 
As emphasized in \cite{loday-vallette}, 
the copermutad $A^\antishriek$ is more fundamental than $A^!$. It can be
defined directly, without the passage through
the permutad $A^!$, using coideals in cofree copermutads, without any
assumptions on the size of the generators of $A$. Given the
applications we had in mind, we however decided not to use this general
definition and keep working with more intuitive ideals and free permutads.

\begin{proposition}
\label{Pozvani_do_MSRI}
The dual bar construction $\D(\term^!)$ of\/ $\term^!$ is isomorphic to
the dg permutad $\CC_*(P)$ of cellular chains of the permutohedron.
\end{proposition}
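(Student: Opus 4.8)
The plan is to realise both dg permutads as free permutads on one and the same generating collection, and then to match their differentials. Because the cobar differential $\pa_\cobar$ and the cellular boundary are both derivations of a free permutad, each is determined by its restriction to the generators, so everything reduces to a degree count together with a comparison of signs.

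First I would pin down the underlying graded permutads. By Proposition~\ref{Dnes_vecer_prijde_Jarka.} we have $\term^!(\underline n)\cong\susp^{\hskip .15em n}\bfk$ with a single generator $b_n$ in each arity; hence the Koszul dual copermutad $\term^\antishriek=(\term^!)^*$ is one-dimensional in every arity, and, after applying the desuspension $\desusp$ prescribed by the cobar construction, the generating collection $\desusp\term^\antishriek$ of $\cobar(\term^\antishriek)$ has $(\desusp\term^\antishriek)(\underline n)\cong\bfk$ concentrated in degree $n-1$. This is precisely the collection $B$ of Example~\ref{Za_tyden_jedu_na_Zimni_skolu.}, where $B(\underline n)=\Span(c_{n-1})$. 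Writing $\gamma_n:=\desusp b_n^*$ for the generator dual to $b_n$, the assignment $\gamma_n\mapsto c_{n-1}$ therefore induces an isomorphism of the underlying free permutads $\cobar(\term^\antishriek)=\PP(\desusp\term^\antishriek)\cong\PP(B)\cong\CC_*(P)$, the last isomorphism being \cite[Theorem~5.3]{loday11:_permut} as recalled in Example~\ref{Za_tyden_jedu_na_Zimni_skolu.}.

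Next I would compute $\pa_\cobar$ on generators. Since $\term^!(\underline n)\cong\bfk$ for every $n$, dualizing the structure operations of $\term^!$ given in Proposition~\ref{Dnes_vecer_prijde_Jarka.} yields the total cooperation $\delta(b_n^*)=\sum_{r\in\Surj(\underline n,\underline 2)}\varepsilon(r)\,(b_{r_1}^*\ot b_{r_2}^*)$, with $r_1,r_2$ and $\varepsilon(r)$ as in~\eqref{Za_chvili_volam_Jarce.}. Feeding this through the suspension isomorphisms that define the map $\sigma$ in Definition~\ref{Odhodlam_se_zitra_vyjet?} and extending as a derivation, I obtain
\[
\pa_\cobar(\gamma_n)=\sum_{r\in\Surj(\underline n,\underline 2)} (-1)^{r_1-1}\,\varepsilon(r)\,\big(\gamma_{r_1}\pcirc_r\gamma_{r_2}\big),
\]
the factor $(-1)^{r_1-1}$ being the Koszul sign incurred when the two desuspensions are moved past the generators. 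Thus $\pa_\cobar$ is a binary (quadratic) derivation whose support is all of $\Surj(\underline n,\underline 2)$, each surjection contributing a term with coefficient $\pm1$.

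Finally I would compare this with the cellular boundary of the permutohedron. The codimension-one faces of $P_{n+1}$ are indexed exactly by the ordered two-block partitions of $\underline n$, i.e.\ by $r\in\Surj(\underline n,\underline 2)$, and under $\CC_*(P)\cong\PP(B)$ the corresponding facet of the top cell $c_{n-1}$ is the product $c_{r_1-1}\pcirc_r c_{r_2-1}$; the explicit boundary formula is the one recorded in \cite[\S9.3]{loday11:_permut}. The crux, and the step I expect to be the main obstacle, is to verify that these two systems of signs agree: the cobar signs are the products of $\varepsilon(r)$ with the Koszul signs coming from the permutadic suspension $\term^!\cong\ss\term$, whereas the Loday--Ronco signs come from an orientation convention on the facets of $P_{n+1}$. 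I would reconcile them by comparing the two boundary formulas face by face; should a residual discrepancy remain, it can only be absorbed into a rescaling $\gamma_n\mapsto\epsilon_n c_{n-1}$ with $\epsilon_n\in\{-1,+1\}$, and the computation must confirm that the discrepancy factors through arity in the form $\epsilon_n\epsilon_{r_1}\epsilon_{r_2}$, so that such a consistent choice of signs exists across all arities. Once the signs are matched, the generator isomorphism intertwines $\pa_\cobar$ with the cellular boundary, and the proof is complete.
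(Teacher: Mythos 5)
Your proposal follows the paper's own argument essentially step for step: identify the underlying free permutads on the dual desuspended generators, compute $\pa_\cobar$ on generators to get $\sum_{r}(-1)^{r_1-1}\varepsilon(r)\,(\gamma_{r_1}\pcirc_r\gamma_{r_2})$, and compare with the Loday--Ronco boundary of \cite[\S9.3]{loday11:_permut}. The residual sign you anticipate is exactly what the paper finds (an overall minus, absorbed by sending each generator to $-c_{n-1}$, i.e.\ your $\epsilon_n=-1$ for all $n$), so the argument is correct and coincides with the paper's proof.
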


\begin{proof}
The proof relies on the description of the permutad $\term^!$ given in
Proposition~\ref{Dnes_vecer_prijde_Jarka.}. 
Denote by \hbox{$e_n \in \, \downarrow \!\term^!(\underline n)^*$}, $n \!\geq\!
1$, the generator dual to  $\uparrow^{n-1} \hskip -.5em 
1 \in\, \uparrow^{n-1}\!
\bbk \cong \, \downarrow \!
\term^!(n) $. Then $e_1,e_2,\ldots$ are 
the free permutadic generators of $\D(\term^!)$. 
Using the description of the structure operations of $\term^!$ given
in Proposition~\ref{Dnes_vecer_prijde_Jarka.}, one 
easily verifies
that the differential
$\pa_{\, \D}$ in the dual bar construction  $\D(\term^!)$ is
\begin{equation}
\label{Dnes_s_Jarkou_a_Michaelem_na_obed.}
\pa_\D(e_n) = \sum_{r \in \Surj(\underline n,\underline 2)} 
 (-1)^{r_1 -1}\varepsilon(r)\cdot
(e_{r_1} \pcirc^\PP_r e_{r_1} ),\ n \geq 1,
\end{equation}
where  $r_1, r_2$ and the sign $\varepsilon(r)$ 
are as in~\eqref{Za_chvili_volam_Jarce.}, and  $\pcirc^\PP_r$ 
is the structure operation of the free permutad $\PP(\term^!)$.

Referring to the description of $\CC_*(P)$ given
in Example~\ref{Za_tyden_jedu_na_Zimni_skolu.}, we define an
isomorphism $\xi: \D(\term^!) \stackrel\cong\longrightarrow \CC_*(P)$ of free
permutads by \[
\xi(e_n) := -c_{n-1}, \ n \geq 1,
\] 
with $c_{n-1}$ the generator
in~\eqref{Je_streda_a_melu_z_posledniho.}. Comparing~\eqref{Dnes_s_Jarkou_a_Michaelem_na_obed.}
to the formula for the differentials of $c_n$'s 
in~\cite[\S9.3]{loday11:_permut} we realize that they match up to an
overall minus sign. Therefore $\xi$ defined above 
commutes with the differentials, 
thus constitutes the required isomorphism of dg permutads. 
\end{proof}

Let us proceed towards the Koszulity property of binary quadratic
permutads in Definition~\ref{Zavola_Petr?}.
One starts from a monomorphism $\susp B \hookrightarrow  A^!$ of
collections defined as
the composition
\[
\susp B \hookrightarrow \PP(\susp B) 
\twoheadrightarrow \PP(\susp B)/(S^\perp) = A^!.
\]
Its linear dual  is a surjection ${A^!}^* \twoheadrightarrow\ \susp
B$  which desuspens to 
a map $\pi:\ \desusp {A^!}^*
\twoheadrightarrow B$. The related {\em twisting
morphism\/}\footnote{We borrowed this terminology from
\cite[Chapter~6]{loday-vallette}.}  $\desusp {A^!}^*  \to A$ is 
the composition
\[
 \desusp  {A^!}^*\stackrel\pi\twoheadrightarrow  B 
 \hookrightarrow \PP(B) 
\twoheadrightarrow \PP(B)/(S) = A.
\]
It extends to a permutad morphism
$\rho : \PP(\desusp {A^!}^*) \to A$ by the freeness of $\PP(\desusp
{A^!}^*)$. One  easily verifies:

\begin{proposition}
One has
$\rho \circ \pa_{\,\D} = 0$, therefore $\rho$ induces the {\em
  canonical map\/} 
\begin{equation}
\label{Jarce_prijede_M1.}
\can:
\D(A^!) = (\PP(\desusp {A^!}^*), \pa_\D)    \longrightarrow (A,0)
\end{equation}  
of dg permutads.
\end{proposition}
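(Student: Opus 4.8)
The plan is to exploit the interplay between the two defining properties of the map $\rho$: it is a morphism of permutads, whereas $\pa_{\,\D}$ is a degree $-1$ derivation of the free permutad $\PP(\desusp {A^!}^*)$. Combining these, for generators-or-not $\xi,\eta$ and any $r \in \Surj(\underline m,\underline 2)$ one computes
\[
\rho\,\pa_{\,\D}(\xi \pcirc_r \eta)
= \big(\rho\,\pa_{\,\D}\xi\big)\pcirc_r \rho(\eta)
+ (-1)^{\deg \xi}\,\rho(\xi)\pcirc_r \big(\rho\,\pa_{\,\D}\eta\big),
\]
since $\pa_{\,\D}$ satisfies the Leibniz rule and $\rho$ commutes with the products $\pcirc_r$. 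Thus $\rho\circ\pa_{\,\D}$ is a \emph{$\rho$-derivation} of $\PP(\desusp {A^!}^*)$ with values in $(A,0)$. A $\rho$-derivation is determined by its restriction to the generating collection: by induction on the number of generators (the word length in $\PP(\desusp {A^!}^*)$), the displayed identity forces it to vanish on every element once it vanishes on $\desusp {A^!}^*$. Hence it suffices to prove $\rho\circ\pa_{\,\D} = 0$ on generators.

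On a generator $\desusp x$, the derivation $\pa_{\,\D}$ equals the degree $-1$ map $\sigma$ of Definition~\ref{Odhodlam_se_zitra_vyjet?}, which takes values in $\PP^2(\desusp {A^!}^*)$ and is built from the copermutad coproduct $\delta_r$ of $A^\antishriek = {A^!}^*$. Therefore $(\rho\circ\pa_{\,\D})(\desusp x)$ is the composite $\mu\circ\PP^2(\vartheta)\circ\sigma$ applied to $\desusp x$, where $\vartheta = \rho|_{\desusp {A^!}^*}$ is the twisting morphism and $\mu : \PP^2(A)\to A$ is induced by the structure operations~\eqref{Pozitri_jedem_do_Zvanovic.}. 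Now $\vartheta$ factors through the projection $\pi : \desusp {A^!}^* \twoheadrightarrow B$, and $B$ is concentrated in arity one because $A$ is \emph{binary} in the sense of Definition~\ref{Zavola_Petr?}. Consequently $\vartheta(\desusp y) = 0$ unless $y \in A^\antishriek(\underline 1)$, so a term $\vartheta(\desusp x')\pcirc_r\vartheta(\desusp x'')$ of $\PP^2(\vartheta)\sigma(\desusp x)$ can be nonzero only when $\inv r(1) = \inv r(2) = \underline 1$. This forces $x \in A^\antishriek(\underline 2)$ and $r \in \Surj(\underline 2,\underline 2)$; for all other arities the composite vanishes automatically.

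It remains to treat the arity-two generators. Using that $B(\underline 1)$ is finite dimensional, the pairing defining the Koszul dual identifies $A^\antishriek(\underline 2) = {A^!}(\underline 2)^* = \big(\PP^2(\susp B^*)/S^\perp\big)^*$ with the double annihilator $(S^\perp)^\perp = S \subset \PP^2(B)$, and identifies $\pi$ on arity one with the canonical isomorphism $\desusp {A^!}(\underline 1)^* \cong B(\underline 1)$. Under these identifications, and because $\sigma$ is dual to the product $\PP^2(\susp B^*)\to {A^!}(\underline 2)$ whose kernel is precisely $S^\perp$, the composite $\mu\circ\PP^2(\pi)\circ\sigma$ restricted to $A^\antishriek(\underline 2)\cong S$ becomes the inclusion $S \hookrightarrow \PP^2(B)$ followed by the canonical projection $\PP^2(B)\twoheadrightarrow \PP(B)/(S) = A$. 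This is zero, since $S$ lies in the ideal $(S)$ by which we quotient. Hence $\rho\circ\pa_{\,\D}$ vanishes on all generators, and therefore identically; this yields the canonical map~\eqref{Jarce_prijede_M1.}.

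\textbf{Main obstacle.} The conceptual reduction is routine, so the real work is bookkeeping: one must pin down the suspension signs $\varepsilon(r)$ entering $\sigma$ and the dual pairing so that the identification $A^\antishriek(\underline 2)\cong S$ and the factorization of $\vartheta$ through $\pi$ make $\mu\circ\PP^2(\pi)\circ\sigma$ \emph{literally} equal to $S\hookrightarrow\PP^2(B)\twoheadrightarrow A$ on the nose, rather than up to an undetermined scalar. Verifying that all these signs are mutually compatible is the step most likely to require care.
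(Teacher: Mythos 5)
Your argument is correct and is precisely the verification the paper leaves to the reader (the paper offers no proof beyond ``one easily verifies''): reduce to generators via the derivation/morphism interplay, use binarity of $A$ to kill all generators except those in $A^\antishriek(\underline 2)$, and there identify the composite with $S\hookrightarrow \PP^2(B)\twoheadrightarrow \PP(B)/(S)$, which vanishes. Your closing caveat is also well placed — the only genuine content is checking that the degree~$-2$ pairing defining $S^\perp$ and the suspension conventions in $\sigma$ identify the image of $\oplus_r\delta_r$ with the subspace $S$ itself (not merely a sign-twisted copy of it), since otherwise the final vanishing would not follow.
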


\begin{definition}
A binary quadratic permutad $A$ is {\em Koszul\/} if the 
canonical map~\eqref{Jarce_prijede_M1.} is a~component-wise homology isomorphism.
\end{definition}

If $A$ is concentrated in degree $0$, 
clearly $H^0(\D(A^!)) \cong A$, thus such an $A$ is Koszul if and only if
$\D(A^!)$ is acyclic in positive dimensions. This observation will be
used in our proof of

\begin{theorem}
\label{Pan_Hladky_zase_del8_problemy.}
The terminal permutad $\term$ is Koszul.
\end{theorem}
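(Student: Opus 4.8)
The plan is to reduce the statement to a purely topological acyclicity fact via the observation recorded immediately before the theorem. The terminal permutad $\term$ is binary quadratic (Example~\ref{Zitra_sraz_s_Jitkou_v_ustavu.} exhibits $\term \cong \PP(\mu)/(S)$ with $B(\underline 1) = \bfk$ finite dimensional), and $\term(\underline n) = \bfk$ sits in degree $0$ for every $\underline n \in \Fin$, so $\term$ is concentrated in degree $0$. By the observation preceding the theorem, it therefore suffices to prove that the dual bar construction $\D(\term^!)$ is acyclic in positive dimensions; the degree-$0$ cohomology then automatically reproduces $\term$, and the canonical map \eqref{Jarce_prijede_M1.} becomes a component-wise homology isomorphism.

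Next I would transport the problem to the permutohedron. Proposition~\ref{Pozvani_do_MSRI} supplies an isomorphism of dg permutads $\D(\term^!) \cong \CC_*(P)$, so in each arity the complex $\D(\term^!)(\underline n)$ is identified, degree-preservingly and compatibly with the differential, with the cellular chain complex $\CC_*(P_{n+1})$ of a single permutohedron. Since $P_{n+1}$ is a convex polytope, it is contractible, so its reduced homology vanishes and $\CC_*(P_{n+1})$ has homology $\bfk$ concentrated in degree $0$ and is acyclic in all positive degrees. Feeding this back through the isomorphism of Proposition~\ref{Pozvani_do_MSRI} gives exactly the acyclicity of $\D(\term^!)$ in positive dimensions demanded above, and hence the Koszulity of $\term$.

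I do not expect a serious obstacle here, because the only substantive computation---matching the cobar differential $\pa_\D$ of $\D(\term^!)$ with the cellular boundary operator of the permutohedron---has already been carried out in the proof of Proposition~\ref{Pozvani_do_MSRI}. The one point that warrants a line of care is bookkeeping of gradings: I would check that the homological degree in which $\CC_*(P_{n+1})$ is asserted acyclic (the cell dimension) is the same grading in which $\D(\term^!)$ is claimed acyclic, which is guaranteed since $\xi$ in Proposition~\ref{Pozvani_do_MSRI} is an isomorphism of dg permutads and is therefore degree-preserving; and that its degree-$0$ part recovers $\term(\underline n) \cong \bfk$, so that one obtains a genuine homology isomorphism rather than merely the vanishing of higher homology.
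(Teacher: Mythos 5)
Your proposal is correct and follows essentially the same route as the paper: it invokes the observation that a degree-$0$ permutad is Koszul iff $\D(\term^!)$ is acyclic in positive dimensions, then combines the isomorphism $\D(\term^!)\cong \CC_*(P)$ of Proposition~\ref{Pozvani_do_MSRI} with the contractibility of the permutohedron. The extra care you take with the grading bookkeeping is sound but adds nothing beyond the paper's argument.
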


\begin{proof}
The statement 
follows from the isomorphism $\D(\term^!) \cong \CC_*(P)$ established
in Proposition~\ref{Pozvani_do_MSRI} combined with the contractibility of the
permutohedron which implies that  $\CC_*(P)$ is acyclic in positive dimensions.
\end{proof}

Let us close this section by
a  Koszulity test in the spirit of the
Ginzburg-Kapranov criterion for 
operads~\cite[Theorem~3.3.2]{ginzburg-kapranov:DMJ94}. For 
$A = \coll An \in \Coll$ with finite-dimensional pieces we define its
generating power series as 
\[
f_A (t) := \sum_{n =1}^\infty \frac {\chi(A(\underline n))}{n!},
\]
in which $\chi$ denotes the Euler characteristic.
One then has

\begin{proposition}
Assume that $A$ is a binary quadratic permutad as in
Definition~\ref{Zavola_Petr?} with $B(1)$ finite dimensional. If $A$
is Koszul, 
then its generating series $f_A$ 
and the series $f_{A^!}$ of its Koszul dual are related by the
functional equation
\begin{equation}
\label{chripka_pred_odjezdem_do_Srni}
f_A(t) = \frac  {-f_{A^!}(t)}{1 + f_{A^!}(t)} \ .
\end{equation}
\end{proposition}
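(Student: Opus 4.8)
The plan is to push the Euler-characteristic bookkeeping through the dual bar construction $\D(A^!)$, exactly as in the operadic Ginzburg--Kapranov test, the only genuinely new ingredient being the generating series of a free permutad.

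First I would record the key combinatorial identity: for any finite-type collection $B$ the generating series of the free permutad $\PP(B)$ is
\[
f_{\PP(B)}(t) = \frac{f_B(t)}{1 - f_B(t)}.
\]
This follows from formula~\eqref{Zitra_jedu_do_Zvanovic.} together with the shuffle decomposition~\eqref{okynko}. Indeed, since $\Surj(\underline n,\underline k) \cong \bigsqcup_{n_1+\cdots+n_k=n} \ush(n_1,\ldots,n_k)$ and the number of $(n_1,\ldots,n_k)$-shuffles is the multinomial coefficient $n!/(n_1!\cdots n_k!)$, the definition of $\PP^k(B)$ gives
\[
\frac{\chi\big(\PP^k(B)(\underline n)\big)}{n!}
= \sum_{n_1+\cdots+n_k=n}\ \prod_{i=1}^k \frac{\chi\big(B(\underline{n_i})\big)}{n_i!},
\]
so that summing against $t^n = t^{n_1}\cdots t^{n_k}$ yields $f_{\PP^k(B)}=f_B^{\,k}$; summing the (formal, since $f_B$ has no constant term) geometric series over $k\geq 1$ produces the displayed formula. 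This is the permutadic analogue of the classical fact that the free associative algebra on $B$ has generating series $f_B/(1-f_B)$.

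Next I would feed this into the cobar construction. By Definition~\ref{Odhodlam_se_zitra_vyjet?}, $\D(A^!) = \cobar(A^\antishriek) = \PP(\desusp A^\antishriek)$ with $A^\antishriek = {A^!}^*$, and all pieces are finite dimensional because $B(\underline 1)$ is and $A,A^!$ are binary. Under the degree-wise dual convention of the Conventions the duality preserves Euler characteristic, $\chi(V^*)=\chi(V)$, whereas the desuspension shifts degree by one and hence reverses the sign, $\chi(\desusp V) = -\chi(V)$. Consequently the generating series of the collection of cobar generators is $f_{\desusp A^\antishriek}(t) = -f_{A^!}(t)$, and since the differential $\pa_\D$ does not alter Euler characteristics, $f_{\D(A^!)}$ may be computed arity by arity from the underlying free permutad:
\[
f_{\D(A^!)}(t) = \frac{f_{\desusp A^\antishriek}(t)}{1 - f_{\desusp A^\antishriek}(t)} = \frac{-f_{A^!}(t)}{1 + f_{A^!}(t)}.
\]

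Finally I would invoke Koszulity. By hypothesis the canonical map $\can\colon \D(A^!)\to(A,0)$ of~\eqref{Jarce_prijede_M1.} is a component-wise homology isomorphism, and the Euler characteristic of a chain complex equals that of its homology, so $\chi\big(\D(A^!)(\underline n)\big) = \chi\big(A(\underline n)\big)$ for every $n$, that is $f_{\D(A^!)}(t)=f_A(t)$. Comparing with the previous display gives the functional equation~\eqref{chripka_pred_odjezdem_do_Srni}. The homological half is routine once Koszulity is assumed; the step requiring genuine care is the combinatorial and sign bookkeeping of the first two paragraphs -- checking that the factorial weights really turn the shuffle sums into an honest product of series, and that the single sign reversal coming from the desuspension (the degree-wise dual contributing none) is precisely what converts the free-permutad geometric series $f/(1-f)$ into the claimed expression $-f_{A^!}/(1+f_{A^!})$.
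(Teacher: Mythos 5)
Your proposal is correct and follows essentially the same route as the paper: the multinomial count of surjections giving $f_{\PP(B)}=f_B(1-f_B)^{-1}$, the sign flip $f_{\desusp A^{!\,*}}=-f_{A^!}$ from the desuspension (the degree-wise dual contributing none), and the equality $f_{\D(A^!)}=f_A$ from Koszulity via invariance of Euler characteristic under quasi-isomorphism. The only difference is that you spell out the intermediate steps (the shuffle count and the duality/desuspension bookkeeping) in more detail than the paper does.
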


\begin{proof}
Suppose that $M = \coll Mn$ is a collection of graded vector spaces
with finite-dimensional pieces. Simple combinatorics gives
\[
\eul{\PP(M)(\underline n)} = \sum_{s \geq 1}\sum_{k_1 + \cdots + k_s =
  n}
\frac {n!}{k_1! \cdots k_s!} \eul{M(\underline k_{\, 1}}) \cdots
\eul{M(\underline k_{\,  s})}  
\]
which is the same as
\[
\frac{\eul{\PP(M)(\underline n)}}{n!} 
= \sum_{s \geq 1}\sum_{k_1 + \cdots + k_s =
  n}
\frac{\eul{M(\underline k_{\, 1})}}{k_1!} \cdots
\frac{\eul{M(\underline k_{\,  s})} }{k_s!}.
\]
Therefore the generating series of $M$ and that of the free permutad $\PP(M)$ are
related by
\[
f_{\PP(M)} = f_M + f_M^2 + f_M^3 + \cdots ={f_M}(1-f_M)^{-1}.
\]
When $A$ is Koszul, 
one has
$f_{\D(A^!)} = f_A $, while we already know from the above calculations that
\[
f_{\D(A^!)} = f_{P(\desusp  \hskip .15em A^!)} =  {f_{\desusp
\hskip .15em    A^!}}(1-f_{\desusp \hskip .15em A^!})^{-1} =   {-f_{
    A^!}}(1+f_{ A^!})^{-1}. 
 \]
This finishes the proof.
\end{proof}

\begin{example}
One has
\[
\textstyle
f_{\ssterm}(t) = t + \frac1{2!} t^2 + \frac1{3!} t^3 + \cdots = e^t-1
\]
while
\[
\textstyle
 f_{\ssterm^!}(t) = -t + \frac1{2!} t^2 - \frac1{3!} t^3 + \cdots = e^{-t}-1.
\]
Plugging it into~\eqref{chripka_pred_odjezdem_do_Srni} results in
\[
e^t-1 = \frac{ -(e^{-t}-1)}{1+ (e^{-t}-1)} = 
\frac{1 -e^{-t}}{e^{-t}} =  e^t-1
\]
as expected.
\end{example}

\begin{example}
\label{Budu_mit_v_Srni_samoztatny_pokoj.}
A `twisted' version of the terminal permutad $\term$ of
Example~\ref{Zitra_sraz_s_Jitkou_v_ustavu.} is the quotient
\[
\tildeterm : = \PP(\mu)/(\widetilde S)
\]
where 
$\mu \in
\PP(\mu)(\underline 1)$ is a degree $0$ generator and
$(\widetilde S)$ the permutadic ideal generated by
\[
\pcirc_{\id}(\mu \ot \mu) + \pcirc_{(21)}(\mu \ot \mu).
\]
Notice that $\tildeterm$ equals the `parametrized associative
permutad' \hbox{$q$-permAs} of \cite[\S4.5]{loday11:_permut} taken with $q=-1$.
Since $\tildeterm$ is a permutadic version of the operad for antiassociative
algebras which serves as 
a standard example of a non-Koszul operad~\cite[Section~5]{galgal},
one would expect that  $\tildeterm$ is non-Koszul as well. 

Surprisingly, it is not so. It turns out that that the dg collections 
$\D(\tw)$ and $\D(\term^!)$ are isomorphic, though they
are \underline{not} isomorphic as dg permutads. This however suffices
for the acyclicity of  $\D(\tw)$ in positive
dimensions, and thus the Koszulity of $\tildeterm$.

The isomorphism 
$\zeta:\D(\tw) \stackrel\cong\longrightarrow  \D(\term^!)$ 
of dg collections is constructed as follows.
As in the proof of Proposition~\ref{Dnes_vecer_prijde_Jarka.}
we establish that
\[
\tw(\underline n) \cong\ \susp^{\hskip .15em n}\,\bfk,\ n \geq 1,
\] 
with the structure operations~\eqref{Pozitri_jedem_do_Zvanovic.}
the canonical isomorphisms 
\[
\hbox{$\susp^{\hskip .15em r_1} \hskip .1em \bbk$} \ot \hbox{$\susp^{\hskip .15em r_2} 
\hskip .1em \bbk$}\hskip .1em \cong\ \hbox{$\susp^{\hskip .15em n}\,\bbk$}
\]
without any additional sign factor. The calculation is actually
even simpler than in the case of the untwisted $\term$, since the relation
induced by the ideal $(\widetilde S^\perp)$ does not involve any
signs. We infer that $\D(\tw)$ is freely generated by degree
$n\-1$ generators $\tilde e_n$, $n \geq 1$, whose differential is given
by a formula analogous to~\eqref{Dnes_s_Jarkou_a_Michaelem_na_obed.}
but without the $\varepsilon(r)$-factor.
The underlying permutad of the dg permutad $\D(\tw)$ is then described as 
\[
\PP\big(\desusp {\tw}^*\big) (\underline n)\cong
 \bigoplus_{1 \leq k \leq n} \  \bigoplus_{r \in
  \Surj(\underline n,\underline k)}
\Span(\tilde e_{r_1} \ot \cdots  \ot \tilde e_{r_k}), \ n \geq 1,
\]
where $r_i$ is the cardinality of the set-theoretic preimage of $i \in
\underline k$ via the map $r : \underline n \epi \underline k$\,. We
have a similar formula for the underlying permutad of $\D(\term)$, namely
\[
\PP\big(\desusp {\term^!}^*\big)  (\underline n) \cong
 \bigoplus_{1 \leq k \leq n} \  \bigoplus_{r \in
  \Surj(\underline n,\underline k)}
\Span(e_{r_1} \ot \cdots  \ot e_{r_k}),\ n \geq 1.
\]
The isomorphism 
$\zeta:\D(\tw) \stackrel\cong\longrightarrow  \D(\term^!)$ is, under
the above identifications, given by
\[
\zeta(\tilde e_{r_1} \ot \cdots  \ot \tilde e_{r_k}) : =
\varepsilon(r)
\cdot (e_{r_1} \ot \cdots  \ot e_{r_k}),
\]
with $\varepsilon(r)$ the sign of the shuffle associated to $r$.
It is simple to verify
that $\zeta$ defined this way commutes with the differentials of the
dual bar constructions, so it is an isomorphism of dg collections. 
On the other hand,
$\zeta$ is clearly not a morphism of permutads. 
\end{example}

\part{Operadic category approach}

\section{Permutads as algebras over the terminal operad}

The fundamental feature of Batanin-Markl's (BM) theory of operadic
categories~\cite{duodel} is that the objects under
study are viewed as algebras over (generalized) operads in a specific
operadic category, cf.\ also the introduction to \cite{Sydney}.
Thus, for instance, ordinary operads are algebras over the terminal
operad ${\sf 1}_\RTr$ in the operadic category $\RTr$ of rooted
trees. 
The operad ${\sf 1}_\RTr$ is
quadratic self-dual which, according to BM theory, implies that the bar
constructions of its algebras (i.e.~operads) are algebras of the same type
(i.e.~operads) again. As we noticed in the introduction, the same is
true for permutads.

Let us start by recalling,
following~\cite[\S14.4]{Sydney}, the operadic category $\ttP$ that plays for
permutads the same r\^ole as $\RTr$ for ordinary operads.\footnote{We
  refer to \cite[\S I.1]{duodel}  again for the language of operadic categories.}
Objects
of $\ttP$ are surjections 
\hbox{$\alpha: \underline n \epi \underline k \in \Surj(n,k)$},
$n \! \geq \! k \geq \! 1$, and morphisms $f : \alpha' \to \alpha''$ 
of $\ttP$ are~diagrams
\begin{equation}
\label{nikdo_mi_nepopral_k_narozeninam}
\xymatrix@C=2.5em{\underline n \ar@{=}[r]
\ar@{->>}[d]_{\alpha'} & \underline n \ar@{->>}[d]^{\alpha''}
\\
\underline k' \ar[r]^\gamma & \underline k''
}
\end{equation}
in which $\gamma$ is order-preserving (and necessarily a
surjection). 

The cardinality functor is defined by $|\alpha:
\underline n \epi \underline k| := k$. The $i$-th fiber $\inv f(i)$ of the
morphism~\eqref{nikdo_mi_nepopral_k_narozeninam} is the surjection
$(\gamma\alpha')^{-1}(i) \epi \gamma^{-1}(i)$, $i \in \underline
k''$. The only local terminal objects of $\ttP$ are the surjections 
\begin{equation}
\label{Zprava_na_grantu.}
U_n :=
\underline n \epi \underline
1,\ n \geq 1,
\end{equation} 
which are also the chosen  (trivial) ones. 
All \qb{s}, and isomorphisms in general, are the identities.

Let us  give also the following
explicit description of the fiber  $\inv f(i)$ of
$f$ in~\eqref{nikdo_mi_nepopral_k_narozeninam}.
Assume that
$k'_1 < \cdots < k'_t$ are numbers such that
\[
\{\Rada{k'}1t\} = \{j \in \underline k' \ | \ \gamma(j) = i \}
\]
and $n_1 < \cdots < n_s$ are such that
\[
\{\Rada{n}1s\} = \{u \in \underline n \ | \ \gamma\alpha'(u) = i \}.
\]
Then $\inv f(i)\in \ttP$ is the surjection $\underline s \epi \underline t$ 
that sends $u \in \underline s$ into the unique $j \in \underline t$ such
that $\alpha'(n_u) = k'_j$.

A $\ttP$-operad $\oP$ is a collection of vector
spaces $\{\oP(\alpha)\}_{\alpha \in \ttP}$ equipped with the structure
operations
\begin{subequations}
\begin{equation}
\label{dva_dny_za_sebou}
m_f : \oP(\alpha'') \ot \oP(f_1) \ot \cdots \ot \oP(f_{k''}) 
\longrightarrow  \oP(\alpha')
\end{equation}
specified for any
morphism $f : \alpha' \to \alpha''$ in $\ttP$ with fibers $\Rada
f1{k''}$, and the unit morphisms 
\begin{equation}
\label{Dvakrat za sebou jsem spal s Jarkou.}
\eta_n  : \bfk \to \oP(U_n), \ n \geq 1,
\end{equation}
\end{subequations}
subject to the axioms listed in \cite[Def.~1.11]{duodel}.

A particular example is the {\em
  terminal $\ttP$-operad\/} $\Pterm$ with \hbox{$\Pterm(\alpha):= \bfk$} for
each $\alpha\in \ttP$, the operations~\eqref{dva_dny_za_sebou} 
the natural identification
$\bfk \ot \bfk^{\otimes k''} \cong \bfk$ and $\eta_n$
in~\eqref{Dvakrat za sebou jsem spal s Jarkou.} the identity $\bfk=\bfk$.\footnote{We are again slightly abusing the
  terminology; $\Pterm$ is in fact the linearization of the terminal
  $\ttP$-operad in the Cartesian monoidal category of sets.}    
Its algebras are described in

\begin{proposition}
\label{Treti_den_je_kriticky.}
Algebras for the terminal $\ttP$-operad $\Pterm$, in the sense
of Definition~1.20 of~\cite{duodel}, are permutads. 
\end{proposition}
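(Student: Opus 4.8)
The plan is to unwind the definition of an algebra over the terminal $\ttP$-operad from \cite[Definition~1.20]{duodel} and identify the resulting data and axioms with those of a permutad in the sense of Definition~\ref{V_nedeli_letim_do_Moskvy.}. An algebra $V$ for a $\ttP$-operad $\oP$ consists of a functor assigning to each local terminal object $U_n$ a vector space $V_n := V(U_n)$, together with, for every morphism $f : \alpha' \to \alpha''$ in $\ttP$, structure maps built from $\oP(\alpha'')$ acting on the tensor product of the values of $V$ on the fibers. Since $\Pterm(\alpha) = \bfk$ for every $\alpha$, the operadic coefficients disappear and the algebra structure reduces to a family of multiplications indexed purely by the morphisms of $\ttP$. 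First I would set $A(\underline n) := V_n$ and check that the only relevant structure maps come from morphisms whose codomain $\alpha''$ has cardinality one, i.e.\ with $\alpha'' = U_n$, so that the fibers are themselves objects of $\ttP$.

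Next I would analyze which morphisms of $\ttP$ produce the binary operations $\pcirc_r$ of~\eqref{Pozitri_jedem_do_Zvanovic.}. A morphism $f : \alpha' \to U_n$ with $\alpha' = (r : \underline n \epi \underline 2)$ and $\gamma : \underline 2 \to \underline 1$ has exactly the two fibers $\inv r(1)$ and $\inv r(2)$, each a local terminal object of the form $U_{r_i}$, by the explicit fiber description given just before~\eqref{Zprava_na_grantu.}. The corresponding structure map of the algebra is then precisely a map $A(\inv r(1)) \ot A(\inv r(2)) \to A(\underline n)$, so each $r \in \Surj(\underline n,\underline 2)$ yields one operation $\pcirc_r$. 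Here I would use that all \qb{s} and isomorphisms in $\ttP$ are identities, so no symmetry or normalization data intervenes; the operation is pinned down by $r$ alone.

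The crux is matching the associativity axioms. The algebra axioms of \cite[Definition~1.20]{duodel} assert compatibility of the structure maps under composition of morphisms in $\ttP$; applied to a two-step composite whose total source is a surjection $\underline m \epi \underline 3$, these encode exactly how a triple product may be bracketed. I would show that the two ways of factoring such a composite through an intermediate surjection to $\underline 2$ correspond, via the explicit formulas for $t(\id \times s)$ and $u(v \times \id)$ established before Definition~\ref{V_nedeli_letim_do_Moskvy.}, to the two sides $\pcirc_t(\id \ot \pcirc_s)$ and $\pcirc_u(\pcirc_v \ot \id)$ of~\eqref{Vratim_se_z_te_Ciny?}. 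The condition $t(\id \times s) = u(v \times \id)$ is precisely the statement that both factorizations refine the \emph{same} morphism $\underline m \epi \underline 3$ of $\ttP$, so the operadic category associativity forces the two composites of structure maps to agree. Since $\Pterm$ has trivial coefficients, these associativity constraints reduce verbatim to the permutad axiom~\eqref{Vratim_se_z_te_Ciny?}.

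The main obstacle I anticipate is bookkeeping rather than conceptual: I must verify that the fiber-formation and composition in $\ttP$ recover exactly the combinatorics of $t(\id\times s)$ and $u(v\times\id)$, including that the intermediate object of a composite morphism has cardinality $2$ and that its fibers reproduce the surjections $s$ and $v$. I would also confirm that the unit maps~\eqref{Dvakrat za sebou jsem spal s Jarkou.} of $\Pterm$ impose no further conditions, since the permutads considered here are nonunital; the functoriality on isomorphisms is automatic because all isomorphisms of $\ttP$ are identities. Conversely, any permutad clearly assembles into such an algebra by running these identifications backwards, giving the equivalence and completing the proof.
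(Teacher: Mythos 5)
Your overall strategy---unwind \cite[Definition~1.20]{duodel} for the constant operad $\Pterm$ and match the result against one of the explicit definitions of a permutad---is the right one, and your observation that the ternary compatibility constraints are governed by the condition $t(\id\times s)=u(v\times\id)$ identifies the correct mechanism. There are, however, two concrete problems. First, your identification of the binary operations rests on a wrong fiber computation. A morphism $f:\alpha'\to U_n$ with $\alpha'=(r:\underline n\epi\underline 2)$ has its fibers indexed by the ordinal $\underline{k''}=\underline 1$ of the \emph{codomain}, so it has exactly one fiber, namely $(\gamma\alpha')^{-1}(1)\epi\gamma^{-1}(1)=\alpha'$ itself---not the two fibers $U_{r_1}$, $U_{r_2}$. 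Those two local terminal objects are the fibers of the \emph{identity} $\id:\alpha'\to\alpha'$, and it is through $\pi_0(s(\alpha'))$, i.e.\ the fibers of the identity, that Definition~1.20 attaches to the \emph{object} $\alpha'$ the structure map $m_{\alpha'}:A(\inv r(1))\ot A(\inv r(2))\to A(\underline n)$. The operations of an algebra are indexed by objects of $\ttP$, with the morphisms supplying only the axioms; you land on the correct operations, but for the wrong reason.

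Second, and more seriously, Definition~1.20 produces a structure map $m_\alpha$ of arity $k$ for \emph{every} object $\alpha:\underline n\epi\underline k$, not only for $k=2$. Your claim that ``the only relevant structure maps'' are the binary ones is precisely what has to be proved: one must show that the $k$-ary operations for $k\geq 3$ are iterated binary composites and that the full system of compatibility constraints collapses to the ternary relation~\eqref{Vratim_se_z_te_Ciny?}; as written, that reduction is asserted rather than argued. The paper avoids this issue altogether by matching the full family $\{m_\alpha\}$ with the \emph{monadic} Definition~\ref{Posledni_moznost_jet_na_kole_a_ja_mam_chripku.}, whose structure map already carries all arities: the key point there is the substitution identity $\alpha'=(\alpha'';\Rada f1{k''})$ relating a morphism to its fibers, which turns the operad-morphism condition $m_{\alpha'}=m_{\alpha''}(m_{f_1},\dots,m_{f_{k''}})$ verbatim into the $\PP$-algebra axiom. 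If you insist on landing on the binary Definition~\ref{V_nedeli_letim_do_Moskvy.} instead, you owe an additional induction on $|\alpha|$ establishing that the higher operations are determined by, and consistent with, the binary ones; until that is supplied, the proposed equivalence is incomplete.
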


\begin{proof}
The statement is a part of~\cite[Theorem~14.4]{Sydney} whose proof
uses an explicit  presentation of
$\Pterm$, but we will show directly that  $\Pterm$-algebras
are the same as algebras for the monad~$\PP$ in
Definition~\ref{Posledni_moznost_jet_na_kole_a_ja_mam_chripku.}. 
Let $f : \alpha' \to \alpha''$, $\alpha' \in \Surj(n,k')$, $\alpha''
\in \Surj(n,k'')$, be a morphism in $\ttP$ with fibers $\Rada
f1{k''}$. The crucial fact on which
our proof is based is that
\begin{equation}
\label{Asi_dneska_ten_Kolin_pustim.}
\alpha' = (\alpha''; \Rada f1{k''}),
\end{equation}
where $(-;\rada --)$ is the substitution introduced in
\cite[Section~1.2]{loday11:_permut}. Formula~\eqref{Asi_dneska_ten_Kolin_pustim.}
follows directly from definitions, as the reader may check easily.

Let us inspect what~\cite[Definition~1.20]{duodel} gives in our
case. The underlying spaces of an algebra over an operad in
an operadic category $\ttO$ is indexed by the set $\pi_0(\ttO)$ of its
connected components, which is isomorphic to the set of chosen
terminal objects. We identify
\[
\pi_0(\ttP) = \{\underline 1,
\underline 2, \underline 3, \ldots\},
\]
thus $\Pterm$-algebras are collections $A = \{A(\underline n) \in \Vect\ | \
\underline n \in \Fin\} \in \Coll$. By~\cite[Definition~1.20]{duodel}
again, the structure maps of an algebra $A$ over an operad $\oP$ in
an operadic category $\ttO$ are morphisms
\begin{equation}
\label{Mozna_nakonec_do_Kolina_pojedu.}
\mu_T : \oP(T) \longrightarrow   
\Lin\big(\bigotimes_{c \in \pi_0(s(T))} A(c),
A(\pi_0(T))\big),
\end{equation}
where $T \in \ttO$, $\pi_0(s(T))$ is the subset of $\pi_0(\ttO)$
formed by
the connected components of $\ttO$ to which the
fibers of the identity map $\id : T \to T$ belong, and
$\pi_0(T)$ is the connected component of~$T$. In our case, $\oP$ is
the constant $\ttP$-operad $\Pterm$ whose each piece  equals $\bbk$. 
If $\alpha : \underline n \epi \underline k \in
\ttP$,~clearly 
\[
\pi_0(s(\alpha)) = \{\underline n_1,\ldots, \underline n_k\},\ \underline
n_i := \alpha^{-1} (i), \ 1 \leq i \leq k,
\]
while $\pi_0(\alpha) = \underline n$. The structure maps associated to such an
$\alpha$ are therefore given by
\begin{equation}
\label{Dnes_byla_vojenska_prehlidka.}
m_\alpha := \mu_\alpha(1) : A(\underline n_1) \ot \cdots \ot A(\underline n_k)
\longrightarrow  A(\underline n).
\end{equation}
According to~\cite[Definition~1.20]{duodel}, the structure
maps~\eqref{Mozna_nakonec_do_Kolina_pojedu.} must assemble to 
a morphism $\mu : \oP \to \End^{\,\ttO}_A$ of
$\oP$ to the endomorphism operad of the collection $A$. 
Let us inspect what it means in our case.

First of all, $\mu : \Pterm \to \End^{\,\ttP}_A$ must preserve operad
units. This means that for each $\alpha$ which is chosen local 
terminal, i.e.\
$\alpha  : \underline n \epi \underline 1$ for some $n \geq 1$, 
the diagram  
\[
\xymatrix{\Pterm(\alpha) \ar@{=}[dr]\ar[rr]^{\mu_{\alpha}}   & & \End^{\,\ttP}_A(\alpha)
\\
&\ar[ur]^(.4){\eta_\alpha}  \ \bbk,&
}
\]
in which $\eta_\alpha$ is the unit morphism of the endomorphism
operad, commutes. This is the same as to require that, 
for $\alpha: \underline n \epi \underline 1$, the structure map  
\[
m_\alpha : A(\underline n)
\longrightarrow  A(\underline n)
\]
equals the identity. It thus bears no information, so we consider
$m_\alpha$'s in~\eqref{Dnes_byla_vojenska_prehlidka.} only for
$|\alpha| \geq 2$.

Next, we must verify that $\mu : \Pterm \to \End^{\,\ttP}_A$ commutes
with the operadic structure operations.
Assume therefore that $f : \alpha' \to \alpha''$ is a morphism
in~\eqref{nikdo_mi_nepopral_k_narozeninam}, with fibers $\Rada f1{k''}$.
Its $i$th fiber $f_i$, $1 \leq i \leq  {k''}$, belongs to 
$\Surj(n_i,l_i)$, where $\underline l_i := \gamma^{-1}(i)$ and
$n_i\geq 1$ are such that 
\[
n_1 + \cdots + n_{k''} = n.
\]
Moreover,  
\[
\pi_0(s(f_i)) = \{\underline n^1_i,\ldots,\underline n^{l_i}_i\},  \ 1
\leq i \leq k'', 
\]
with some $n^1_i,\ldots,n^{l_i}_i\geq 1$ such that
\[
n^1_i + \cdots + n^{l_i}_i = n_i, \ 1 \leq i \leq k'',
\]
while $\pi_0(f_i) = \underline n_i$. Notice also that
\[
\pi_0(s(\alpha')) = \{\underline n^1_1,\ldots,\underline n^{l_1}_1,
\ldots, \underline n^1_{k''},\ldots,\underline n^{l_{k''}}_{k''}\}
\]
and that
\[
\pi_0(s(\alpha'')) = \{\rada{\underline n_1}{\underline n_{k''}}\}.
\]
One easily finds that  $\mu : \Pterm \to \End^{\,\ttP}_A$ commutes
with the structure operations 
of $\ttP$-operads if and only if, for each $f$ as above, the diagram
\[
\xymatrix{\displaystyle\bigotimes_{1 \leq i \leq k''}
  \Pterm(f_i) \ot  \Pterm(\alpha'') 
\ar[r]^(.6){\gamma_f}_(.6)\cong 
\ar[d]_{\bigotimes_{i} \mu_{f_i} \ot  \mu_{\alpha''}}
& \Pterm(\alpha') \ar[d]^{\mu_{\alpha'}}
\\
\ar[r]^(.659){\comp}
\displaystyle
\bigotimes_{1 \leq i \leq k''} \Lin\big(
\bigotimes_{1 \leq s \leq l_i}   A(\underline n^s_i),A(\underline
n_i)\big)
\ot\Lin\big( \bigotimes_{1 \leq i \leq k''} A(\underline n_i),A(\underline n)\big)
&
\displaystyle
\Lin\big(\hskip -.3em 
\bigotimes_\doubless{1 \leq i \leq k''}{1 \leq s \leq l_i}   
A(\underline n^s_i),A(\underline
n)\big)
}
\]
in which  $\gamma_f$ is the operatic composition in $\Pterm$ and
${\comp}$ the obvious composition of linear maps, commutes. Since, in our case,
\[
 \bigotimes_{1 \leq i \leq k''}
  \Pterm(f_i)  \ot \Pterm(\alpha'')\cong \bbk \cong  \Pterm(\alpha')
\] 
and $\gamma_f$ is, under this identification, the identity, 
the commutativity of the above diagram
is equivalent to the equation
\[
{\comp}(\bigotimes_{1 \leq i \leq
    k''} m_{f_i} \ot m_{\alpha''}) = m_{\alpha'}
\]
involving the structure maps~\eqref{Dnes_byla_vojenska_prehlidka.}.
In other words, one requires that,
whenever~\eqref{Asi_dneska_ten_Kolin_pustim.} is satisfied, 
\[
m_{\alpha'} = m_{\alpha''}(m_{f_1},\ldots,m_{f_{k''}})
\]
which is an equality of maps
\[
A(\underline n^1_1) \ot \cdots \ot A(\underline n^{l_1}_1) 
\ot \cdots \ot A(\underline n^1_{k''}) \ot \cdots \ot A(\underline
n^{l_{k''}}_{k''}) \longrightarrow A(\underline n).
\]

 To finish
the proof, it is
enough to realize how the
substitution~\eqref{Asi_dneska_ten_Kolin_pustim.} enters, in the proof
of \cite[Proposition~1.4]{loday11:_permut},  the
definition of the map $\Gamma_E$ in~\eqref{Pujdu_si_nakoupit.}. The
fact 
that $\PP$-algebras are indeed the same as the structures described
above will then be self-evident.
\end{proof}

\section{Free $\ttP$-operads}

This section is a preparation  for the construction of the
minimal model of the terminal $\ttP$-operad given in
Section~\ref{Pujdeme_dnes_s_Jarcou_na_muslicky?} and for the
introduction of Koszul duality and Koszulity of $\ttP$-operads given
in Sections~\ref{Slepim_si_i_410?} and~\ref{Zase_mam_chripku!}.  
Free operads were, in the context of
general operadic categories, addressed in~\cite[Section~10]{Sydney} to
which we refer for the terminology used in the following
sentences. Our situation is
however simplified by the fact that all local terminal objects in
$\ttP$ are the chosen terminal ones, thus unital $\ttP$-operads are
automatically strictly extended unital. 
Moreover, for our purposes it
suffices to consider only $1$-connected $\ttP$-operads, i.e.\ to operads
$\oP$ such that $\oP(\alpha) \cong \bbk$ if $|\alpha| = 1$.

With the above in mind, we introduce the category $\Collect$  
whose objects are collections 
$E = \{E(\alpha) \ | \ \alpha \in \ttP\}$ of graded vector spaces with
$E(\alpha)= 0$ if $|\alpha| = 1$, and level-wise morphisms.
One has the obvious forgetful functor
\[
\Box : \Alg \longrightarrow \Collect
\]   
from the category of $1$-connected unital $\ttP$-operads
given by
\[
\Box\, \oP(\alpha) := 
\begin{cases}
  \oP(\alpha) &\hbox {if $|\alpha| \geq 2$, and}
\\
0& \hbox{otherwise.} 
\end{cases}
\]
Theorem~10.11 of~\cite{Sydney} guarantees that $\Box$ admits a left
adjoint $\F: \Collect \to \Alg$. We call $\F(E)$ for $E\in \Collect$
the {\em free $\ttP$-operad\/} generated by a $1$-connected 
$\ttP$-collection~$E$.

We describe in detail $\F(E)$ for generating collections of a particular
form. The central r\^oles in this simplified construction
will be played by the classical
free non-$\Sigma$ operads. This generality 
is sufficient for all concrete applications
given in the rest of this paper. The construction of $\F(E)$ for an arbitrary 
$1$-connected $\ttP$-collection $E$ is sketched out at the end of this section.

Let $\Ord$ be the subcategory of $\Fin$ with the same objects and
order-preserving surjections as morphisms. It is an operadic category
whose operads are the classical constant-free
\hbox{non-$\Sigma$} (non-symmetric)
operads~\cite[Example~1.15]{duodel}. 
One has a strict operadic functor, in the sense 
of~\cite[p.~1635]{duodel}, 
$\des: \ttP \to \Ord$ given on objects by
$\des(\underline n \epi \underline k) := \underline k$ while, for a morphism $f$
in~\eqref{nikdo_mi_nepopral_k_narozeninam}, one puts $\des(f) := \gamma$.
According to general theory~\cite[p.~1639]{duodel}, each $\Ord$-operad $\uoS$ 
determines,\footnote{We use the convention pioneered in
  \cite{markl-shnider-stasheff:book} and distinguish  classical non-$\Sigma$
  operads by \underline{inderl}y\underline{nin}g.}   
via the restriction along $\des$, a $\ttP$-operad
$\des^*(\uoS)$ such that $\des^*(\uoS)(\alpha) = \uoS(\des(\alpha))$,
$\alpha \in \ttP$.

The form of the structure map $m_f$ of a $\ttP$-operad $\oP$
associated to a morphism $f : \alpha' \to \alpha''$ in $\ttP$ with
fibers $\Rada f1{k''}$ is recalled in~\eqref{dva_dny_za_sebou}.  For
$\oP = \des^*(\uoS)$, $m_f$ is determined by the structure operation
$\underline m_\gamma: \uoS(k'') \ot \uoS(l_1) \ot \cdots \ot
\uoS(l_{k''}) \to \uoS(k')$ of $\uoS$ associated 
to $\gamma = \des(f)$\footnote{The form of structure operations
  of non-$\Sigma$ operads is recalled 
in~\eqref{Dostal jsem obrovskou lahev rumu.} below.}  via the
commutativity of the diagram
\[
\xymatrix{
\des^*(\uoS)(\alpha'') \ot \des^*(\uoS)(f_1) \ot \cdots \ot
\des^*(\uoS)(f_{k''}) \ar[d]_{m_f} \ar@{=}[r]
&
\uoS(k'') \ot \uoS(l_1) \ot \cdots \ot \uoS(l_{k''}) \ar[d]^{\underline m_\gamma}
\\
\des^*(\uoS)(\alpha') 
 \ar@{=}[r]
& \ \uoS(k'). 
}
\]
Let $E = \{E(\alpha)\}_{\alpha \in \ttP}$ be a collection of vector
spaces such that 
\begin{equation}
\label{Dnes_jsem_daval_dalsi_rozhovor.}
E(\alpha' : \underline n' \epi \underline k') =  
E(\alpha'' : \underline n'' \epi \underline k'') \ \hbox { if } \ k' = k''  
\end{equation}
and $\underline E = \coll{\underline E}k \in \Coll$ be defined as
\begin{equation}
\label{Pujdu_si_koupit_ztracene_pouzdro_na_bryle.}
\underline E(\underline k) := E(\alpha : \underline n \epi \underline k)
\end{equation} 
for an arbitrary $\alpha : \underline n \epi \underline k$, $k \geq 1$.
Let finally $\uF(\underline E)$ be the free
non-$\Sigma$ operad generated by a collection $\underline E$ 
above, and $\F(E) := \des^* (\uF(\underline E))$.

\begin{proposition}
\label{Leopold}
The  $\ttP$-operad $\F(E) = \des^* (\uF(\underline E))$ is the free  $\ttP$-operad generated by
the collection $E$. It is naturally graded,
\[
\F(E)(\alpha) = \bigoplus_{s \geq 0} \F^s(E)(\alpha),\ \alpha \in \ttP.
\]
The grading is such that 
$\F^1(E)(\alpha) = E(\alpha)$ for all $\alpha \in \ttP$, and
\begin{equation}
\label{Zitra_sraz_s_Nimou.}
\F^0(E)(\alpha: \underline n \epi \underline k) =
\begin{cases}
\bbk & \hbox {if $k=1$, and}
\\
0 & \hbox {otherwise.}  
\end{cases}
\end{equation}
\end{proposition}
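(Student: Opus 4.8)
The plan is to verify that $\des^*(\uF(\underline E))$, equipped with the evident inclusion of generators, satisfies the universal property characterizing the free $\ttP$-operad $\F(E)$, and separately to read off the grading from that of $\uF(\underline E)$. I would begin with the grading, which is purely formal. The free non-$\Sigma$ operad carries its standard grading $\uF(\underline E)=\bigoplus_{s\ge 0}\uF^s(\underline E)$ by the number of generating vertices, with $\uF^0(\underline E)(\underline k)=\bbk$ for $k=1$ and $0$ otherwise, and $\uF^1(\underline E)=\underline E$. Since $\des^*$ acts by $\des^*(\uoS)(\alpha)=\uoS(|\alpha|)$, this grading transports verbatim to $\F^s(E)(\alpha):=\uF^s(\underline E)(|\alpha|)$. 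Evaluating at $s=0$ reproduces \eqref{Zitra_sraz_s_Nimou.}, while $\F^1(E)(\alpha)=\underline E(|\alpha|)=E(\alpha)$ by \eqref{Dnes_jsem_daval_dalsi_rozhovor.} and \eqref{Pujdu_si_koupit_ztracene_pouzdro_na_bryle.}.

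For the freeness, since Theorem~10.11 of \cite{Sydney} already furnishes the free functor $\F$, I would produce the canonical comparison morphism $\Psi:\F(E)\to\des^*(\uF(\underline E))$ induced by the generator inclusion $E\cong\F^1(E)\hookrightarrow\Box\,\des^*(\uF(\underline E))$, and then prove that $\Psi$ is an isomorphism. The cleanest route is to compare the two operads piece by piece through their tree descriptions. The free $\ttP$-operad $\F(E)(\alpha)$ is, by the construction underlying \cite[Section~10]{Sydney}, a sum over rooted trees in $\ttP$ lying over $\alpha$, each vertex contributing $E$ evaluated at the corresponding fiber; the free non-$\Sigma$ operad $\uF(\underline E)(|\alpha|)$ is the analogous sum over $\Ord$-trees over $\underline{|\alpha|}$.

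The hard part, and the genuine content of the proposition, is the bijection between these two indexing sets of trees, which is exactly the discrete opfibration property of $\des:\ttP\to\Ord$ emphasized in the introduction. Concretely, identity \eqref{Asi_dneska_ten_Kolin_pustim.} says that the source of a morphism in $\ttP$ is the operadic substitution of its target with its fibers, so an iterated decomposition of $\alpha$ in $\ttP$ is the same datum as an iterated substitution expressing $\alpha$; applying $\des$ collapses each such decomposition to an $\Ord$-tree over $|\alpha|$, and the opfibration property guarantees that this collapse is a bijection once the root object $\alpha$ is fixed. Under \eqref{Dnes_jsem_daval_dalsi_rozhovor.} the vertex labels agree, $E(\mathrm{fiber})=\underline E(\mathrm{arity})$, so the summands correspond termwise and $\Psi$ is a levelwise isomorphism. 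I would finish by checking that $\Psi$ intertwines the structure operations: on $\des^*(\uF(\underline E))$ a composition along $f:\alpha'\to\alpha''$ is, by the very definition of the restriction, the non-$\Sigma$ grafting along $\des(f)=\gamma$, which is precisely grafting of the corresponding trees, so compatibility is automatic from the matching of trees together with \eqref{Asi_dneska_ten_Kolin_pustim.}. The uniqueness clause of the universal property then follows since every tree over $\alpha$ with more than one vertex is a graft of smaller ones, whence any $\ttP$-operad morphism out of $\des^*(\uF(\underline E))$ is determined by its restriction to the generators $E$.
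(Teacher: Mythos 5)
Your proof is correct, but it is not the argument the paper gives. The paper's proof is purely adjunction-theoretic: it recalls that a discrete opfibration $p:\ttO\to{\tt P}$ gives the restriction $p^*$ a \emph{right} adjoint $p_*$ with $p_*(\oO)(T)=\prod_{p(t)=T}\oO(t)$, checks that $\des:\ttP\to\Ord$ is such an opfibration, and then computes
\[
\Op{\ttP}\big(\des^*(\uF(\underline E)),\uoO\big)\cong
\Op{\Ord}\big(\uF(\underline E),\des_*(\uoO)\big)\cong
\prod_{k}\mathrm{Hom}\big(\underline E(\underline k),\textstyle\prod_{\alpha:\underline n\epi\underline k}\uoO(\alpha)\big)
\cong \prod_{\alpha}\mathrm{Hom}\big(E(\alpha),\uoO(\alpha)\big),
\]
which exhibits the universal property directly, with no tree combinatorics; the grading is then simply transported from $\uF(\underline E)$, as in your first paragraph. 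Your route --- building the comparison map $\Psi$ out of the free functor of \cite[Theorem~10.11]{Sydney} and matching the tree expansions of the two operads summand by summand via the unique-lifting property of the opfibration --- is exactly the argument the author dismisses in the first sentence of the proof as ``immediately obvious from the explicit description of free operads given in \cite[Section~10]{Sydney}'' before opting for the ``independent proof.'' Indeed, the tree model you need is spelled out only \emph{after} the proposition, in the ``General case'' paragraph, where one sees that the indexing set $\Tr^s(\alpha)$ depends only on $|\alpha|$ and the vertex decorations $E(\alpha_v)$ reduce to $\underline E(|\alpha_v|)$ under~\eqref{Dnes_jsem_daval_dalsi_rozhovor.} --- which is precisely your termwise matching. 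What each approach buys: the adjunction argument is shorter, needs no explicit model of $\F(E)$, and works verbatim for any discrete opfibration between operadic categories; your argument is more concrete, and as a by-product identifies the summands of $\F(E)(\alpha)$ explicitly rather than only asserting the universal property. The one place where you should be more careful is the phrase ``the opfibration property guarantees that this collapse is a bijection'': strictly speaking the unique lifting applies to single morphisms out of a fixed source, so one needs a short induction on the height of the tree to lift an entire iterated decomposition uniquely; this is routine but is the actual content of the bijection you invoke.
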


\begin{proof}
  The claim is immediately obvious from the explicit description of free
operads given in~\cite[Section~10]{Sydney}, but we give an independent proof.
Recall from~\cite[\S5.2]{Sydney} 
that a strict operadic functor $p:\ttO\to {\tt P}$ is  a {\em discrete
    opfibration\/} if 
\begin{itemize}
\item[(i)]
$p$ induces a surjection $\pi_0(\ttO) \twoheadrightarrow
\pi_0(\tt P)$ and
\item[(ii)]
for any morphism $f : T\to S$ in $\tt P$ and any
  $t \in \ttO$ such that
  $  p(t) = T$
  there exists a~unique $\sigma : t\to s$ in $\ttO$ such that
  $p(\sigma) = f$.
\end{itemize}
By dualizing~\cite[Theorem~2.4]{duodel} one verifies that the
restriction  $p^* :\Op{\tt P} \to \Op{\ttO}$ between the
associated categories of operads has a right
adjoint $p_* : \Op{\ttO} \to \Op{\tt P}$ defined on objects by
\[
p_*(\oO)(T) := \prod_{p(t) = T} \oO(t), \ \oO \in  \Op{\ttO}, \
T \in \tt P.
\]
One thus has the adjunction
\begin{equation}
\label{Zitra_snad_naposledy_predsedam_komisi.}
\Op{\ttO}\big(p^*(\oS),\oO\big) \cong 
\Op{\tt P}\big(\oS,p_*(\oO)\big),\
\oS \in \Op{\tt P},\ \oO \in \Op{\tt P}.
\end{equation}

It is easy to check that $\des : \ttP \to \Ord$ is a discrete
opfibration, therefore the
adjunction~\eqref{Zitra_snad_naposledy_predsedam_komisi.}, with
$\uF(\underline E)$ in place of $\oS$ and a non-$\Sigma$ constant-free
operad $\uoO$ in place of~$\oO$,~gives
\[
\Op{\ttP}\big(\F(E),\uoO\big) \cong 
\Op{\Ord}\big(\uF(\underline E),\des_*(\uoO)\big).
\]
Invoking the fact that $\uF(\underline E)$ is the free
non-$\Sigma$-operad, one sees that
the right hand side of the above isomorphism consists of families of
linear maps
\begin{equation}
\label{Hilzner}
\{\underline E(\underline k) \to \des_*(\uoO)(\underline k)\ | \
k \geq 1\}.
\end{equation}
Since, by definition, 
$\des_*(\uoO)(\underline k) = \prod_{\alpha
: \underline n \to \underline k} \uoO(\alpha)$, taking into account the
definition~\eqref{Pujdu_si_koupit_ztracene_pouzdro_na_bryle.} of $E$,
one sees that the family in~\eqref{Hilzner} is the same as a family of
linear maps
\begin{equation}
\label{Hruzova}
\{ E(\alpha) \to \uoO(\alpha) \ | \
\alpha \in  \ttP\}.
\end{equation}
In other words, $\ttP$-operad maps $\F(E) \to \uoO$ are in a natural
one-to-one correspondence with families~\eqref{Hruzova}.  This makes
the freeness of $\F(E)$ obvious.

The free non-$\Sigma$ operad  $\uF(\underline E)$ is naturally graded,
with  $\uF^1(\underline E) = \underline
E$~\cite[page~1475]{markl:zebrulka}, and this grading clearly
induces a grading of  $\F(E) = \des^* (\uF(\underline E))$ having the
requisite properties.
This finishes the proof.
\end{proof}

Let us point out that~\eqref{Dnes_jsem_daval_dalsi_rozhovor.}
characterizes collections $E \in \Collect$ that are the restrictions  of
some collection 
$\underline E \in \Collectord$
along $\des: \ttP \to \Ord$, i.e.\ that are of the form
$E = \des^*(\underline E)$ for some $\underline E$.

\begin{remark}
As argued in~\cite[Subsect.~5.2]{Sydney}, each discrete operadic opfibration
$p : \ttO \to \tt P$ 
is obtained from a certain $\tt P$-cooperad $\oC$ via the operadic
Grothendieck construction. In our concrete case the corresponding 
$\Ord$-cooperad $\oC$ is given by
\[
\oC(\underline k) := \coprod_{n \geq k}\Surj(\underline n,\underline
k), \ k \geq 1.
\]
\end{remark}

\noindent
{\bf General case.}
Let $E$ be an arbitrary $1$-connected 
$\ttP$-collection. The constructions
in Section~10 of~\cite{Sydney} 
specialize to the  description of the free $\ttP$-operad $\F(E)$
generated by $E$ given below. We however need some notation.

For $\alpha : \underline n \epi \underline k\in \ttP$ and $s \geq 1$,
let $\Tr^s(\alpha)$ be the set of planar rooted trees growing from the
bottom up with $s$ at least binary
vertices, and leaves labeled from the left to right by the elements of
the ordered set $\underline k$. 
We extend this definition by postulating $\Tr^0(\alpha) := \emptyset$ if
$|\alpha|> 1$, while for $|\alpha| =1$,
$\Tr^0(\alpha)$  
consists of a singular rooted tree with one leaf and no~vertex. 

Each vertex $v \in \vert(\tau)$ of $\tau \in \Tr^s(\alpha)$ 
determines a segment $\underline k_v
\subset \underline k$ of those 
$i \in\underline k$ for which $v$ lies
on the path in $\tau$ connecting the leaf labeled by $i$ with
the root.
We then define~$\underline n_v$ to be the ordinal 
given by the pullback
\[
\xymatrix@C=3em{\underline n_v\ar[r]\ar[d] & \underline n \ar[d]^\alpha
\\
\underline k_v\ar@{^{(}->}[r] & \underline k.
}
\] 
Consider finally the surjection
\begin{equation}
\label{Dnes_v_Mnisku}
\xymatrix@1{\alpha_v:
\underline n_v \ar@{->>}[r]  &\ \In(v)
}
\end{equation}
to the set $\In(v)$ of edges incoming to $v$ which sends $j \in
\underline n_v$ to the unique edge in the path
connecting $\alpha(j)$ to the root of $\tau$. The set $\In(v)$ is
ordered by the clockwise orientation of the plane, and~\eqref{Dnes_v_Mnisku} is
order-preserving. We will thus interpret it as an object of  
$\ttP$, cf.~the remark after
diagram~\eqref{S_Jarkou_do_Mnisku.}. We believe that
Figure~\ref{Opet_jsem_podlehnul.} makes these definitions clear.
\begin{figure}
\[
\psscalebox{.8 .8} 
{
\begin{pspicture}(0,-4.51)(18.203558,4.51)
\psline[linecolor=black, linewidth=0.04](4.905,1.09)(14.505,1.09)
\psline[linecolor=black, linewidth=0.04](4.905,1.09)(9.705,-3.71)(14.505,1.09)
\psdots[linecolor=black, dotsize=0.2](4.905,1.09)
\psdots[linecolor=black, dotsize=0.2](6.105,1.09)
\psdots[linecolor=black, dotsize=0.2](7.305,1.09)
\psdots[linecolor=black, dotsize=0.2](8.505,1.09)
\psdots[linecolor=black, dotsize=0.2](9.705,1.09)
\psdots[linecolor=black, dotsize=0.2](10.905,1.09)
\psdots[linecolor=black, dotsize=0.2](12.105,1.09)
\psdots[linecolor=black, dotsize=0.2](13.305,1.09)
\psdots[linecolor=black, dotsize=0.2](14.505,1.09)
\psline[linecolor=black, linewidth=0.04](9.705,1.09)(10.505,0.29)(10.905,1.09)
\psline[linecolor=black, linewidth=0.04](12.105,1.09)(12.105,-0.11)(13.305,1.09)(13.305,1.09)
\psline[linecolor=black, linewidth=0.04](6.105,1.09)(8.505,-0.91)(8.505,1.09)
\psline[linecolor=black, linewidth=0.04](7.305,1.09)(8.505,-0.91)(9.705,-3.71)(9.705,-3.71)
\psdots[linecolor=black, dotsize=0.2](2.505,3.89)
\psdots[linecolor=black, dotsize=0.2](3.705,3.89)
\psdots[linecolor=black, dotsize=0.2](4.905,3.89)
\psdots[linecolor=black, dotsize=0.2](6.105,3.89)
\psdots[linecolor=black, dotsize=0.2](7.305,3.89)
\psdots[linecolor=black, dotsize=0.2](8.505,3.89)
\psdots[linecolor=black, dotsize=0.2](9.705,3.89)
\psdots[linecolor=black, dotsize=0.2](10.905,3.89)
\psdots[linecolor=black, dotsize=0.2](12.105,3.89)
\psdots[linecolor=black, dotsize=0.2](13.305,3.89)
\psdots[linecolor=black, dotsize=0.2](14.505,3.89)
\psdots[linecolor=black, dotsize=0.2](15.705,3.89)
\psdots[linecolor=black, dotsize=0.2](16.905,3.89)
\psline[linecolor=black, linewidth=0.02, arrowsize=0.05291667cm 3.0,arrowlength=4.0,arrowinset=0.0]{->}(4.905,3.89)(9.705,1.09)
\psline[linecolor=black, linewidth=0.02, arrowsize=0.05291667cm 3.0,arrowlength=4.0,arrowinset=0.0]{->}(16.905,3.89)(9.705,1.09)
\psline[linecolor=black, linewidth=0.02, arrowsize=0.05291667cm 3.0,arrowlength=4.0,arrowinset=0.0]{->}(10.905,3.89)(10.905,1.09)
\psline[linecolor=black, linewidth=0.02, arrowsize=0.05291667cm 3.0,arrowlength=4.0,arrowinset=0.0]{->}(8.505,3.89)(10.905,1.09)
\psline[linecolor=black, linewidth=0.02, arrowsize=0.05291667cm 3.0,arrowlength=4.0,arrowinset=0.0]{->}(13.305,3.89)(12.105,1.09)
\psline[linecolor=black, linewidth=0.02, arrowsize=0.05291667cm 3.0,arrowlength=4.0,arrowinset=0.0]{->}(14.505,3.89)(13.305,1.09)
\psline[linecolor=black, linewidth=0.02, arrowsize=0.05291667cm 3.0,arrowlength=4.0,arrowinset=0.0]{->}(3.705,3.89)(12.105,1.09)
\psdots[linecolor=black, dotsize=0.2](18.105,3.89)
\psdots[linecolor=black, dotsize=0.2](1.305,3.89)
\psline[linecolor=black, linewidth=0.02, linestyle=dashed, dash=0.17638889cm 0.10583334cm, arrowsize=0.05291667cm 3.0,arrowlength=4.0,arrowinset=0.0]{->}(6.105,3.89)(4.905,1.09)
\psline[linecolor=black, linewidth=0.02, linestyle=dashed, dash=0.17638889cm 0.10583334cm, arrowsize=0.05291667cm 3.0,arrowlength=4.0,arrowinset=0.0]{->}(2.505,3.89)(7.305,1.09)
\psline[linecolor=black, linewidth=0.02, linestyle=dashed, dash=0.17638889cm 0.10583334cm, arrowsize=0.05291667cm 3.0,arrowlength=4.0,arrowinset=0.0]{->}(7.305,3.89)(6.105,1.09)
\psline[linecolor=black, linewidth=0.02, linestyle=dashed, dash=0.17638889cm 0.10583334cm, arrowsize=0.05291667cm 3.0,arrowlength=4.0,arrowinset=0.0]{->}(9.705,3.89)(8.505,1.09)
\psline[linecolor=black, linewidth=0.02, linestyle=dashed, dash=0.17638889cm 0.10583334cm, arrowsize=0.05291667cm 3.0,arrowlength=4.0,arrowinset=0.0]{->}(1.305,3.89)(14.505,1.09)
\psline[linecolor=black, linewidth=0.02, linestyle=dashed, dash=0.17638889cm 0.10583334cm, arrowsize=0.05291667cm 3.0,arrowlength=4.0,arrowinset=0.0]{->}(12.105,3.89)(7.305,1.09)
\psline[linecolor=black, linewidth=0.02, linestyle=dashed, dash=0.17638889cm 0.10583334cm, arrowsize=0.05291667cm 3.0,arrowlength=4.0,arrowinset=0.0]{->}(15.705,3.89)(6.105,1.09)
\psline[linecolor=black, linewidth=0.02, linestyle=dashed, dash=0.17638889cm 0.10583334cm, arrowsize=0.05291667cm 3.0,arrowlength=4.0,arrowinset=0.0]{->}(18.105,3.89)(14.505,1.09)
\psellipse[linecolor=black, linewidth=0.04, linestyle=dashed, dash=0.17638889cm 0.10583334cm, dimen=outer](11.505,1.09)(2.2,0.4)
\psdots[linecolor=black, dotsize=0.4](3.705,3.89)
\psdots[linecolor=black, dotsize=0.4](4.905,3.89)
\psdots[linecolor=black, dotsize=0.4](8.505,3.89)
\psdots[linecolor=black, dotsize=0.4](10.905,3.89)
\psdots[linecolor=black, dotsize=0.4](13.305,3.89)
\psdots[linecolor=black, dotsize=0.4](14.505,3.89)
\psdots[linecolor=black, dotsize=0.4](16.905,3.89)
\psline[linecolor=black, linewidth=0.04](10.505,0.29)(10.105,-2.51)(12.105,-0.11)
\psline[linecolor=black, linewidth=0.04](10.105,-2.51)(9.705,-3.71)
\psdots[linecolor=black, dotsize=0.2](10.105,-2.51)
\rput[b](3.705,4.29){$1$}
\rput[b](4.905,4.29){$2$}
\rput[b](8.505,4.29){$3$}
\rput[b](10.905,4.29){$4$}
\rput[b](13.305,4.29){$5$}
\rput[b](14.505,4.29){$6$}
\rput[b](16.905,4.29){$7$}
\rput[b](9.705,-2.51){\Large $v$}
\psline[linecolor=black, linewidth=0.1](9.705,-3.71)(9.705,-4.51)
\rput(10.505,-0.91){$1$}
\rput(11.305,-0.91){$2$}
\pscircle[linecolor=black, linewidth=0.04, fillstyle=solid, dimen=outer](10.3,-0.91){0.3}
\pscircle[linecolor=black, linewidth=0.04, fillstyle=solid, dimen=outer](11.45,-0.91){0.3}
\psdots[linecolor=black, dotstyle=o, dotsize=0.2, fillcolor=white](3.705,3.89)
\rput(10.29,-0.91){$1$}
\rput(11.45,-0.91){$2$}
\psdots[linecolor=black, dotstyle=o, dotsize=0.2, fillcolor=white](4.905,3.89)
\psdots[linecolor=black, dotstyle=o, dotsize=0.2, fillcolor=white](8.505,3.89)
\psdots[linecolor=black, dotstyle=o, dotsize=0.2, fillcolor=white](10.905,3.89)
\psdots[linecolor=black, dotstyle=o, dotsize=0.2, fillcolor=white](13.305,3.89)
\psdots[linecolor=black, dotstyle=o, dotsize=0.2, fillcolor=white](14.505,3.89)
\psdots[linecolor=black, dotstyle=o, dotsize=0.2, fillcolor=white](16.905,3.89)
\rput(6.905,-2.11){\Large $\tau$}
\rput(0.105,3.89){\Large $\underline n$}
\rput(0.105,1.09){\Large $\underline k$}
\rput(0.505,2.69){\large $\alpha$}
\psline[linecolor=black, linewidth=0.04, arrowsize=0.05291667cm 3.0,arrowlength=4.0,arrowinset=0.0]{->}(0.105,3.49)(0.105,1.49)
\end{pspicture}
}
\]
\caption{\label{Opet_jsem_podlehnul.}
Constructing $\alpha_v : \underline 7 \epi
  \underline 2$ out of $\alpha : \underline{15} \epi \underline 9$,
a tree $\tau \in \Tr^5(\alpha)$ and $v \in \vert(\tau)$.
The segment $\underline 9_v$ is marked by the dashed oval, the
elements of the set $\underline{15}_v$ by big punctured dots and the
elements of $\In(v)$ by two numbered balloons.  The map
$\alpha_v$ sends $2,3,4,7$ to $1$ and $1,5,6$ to $2$.
}
\end{figure}
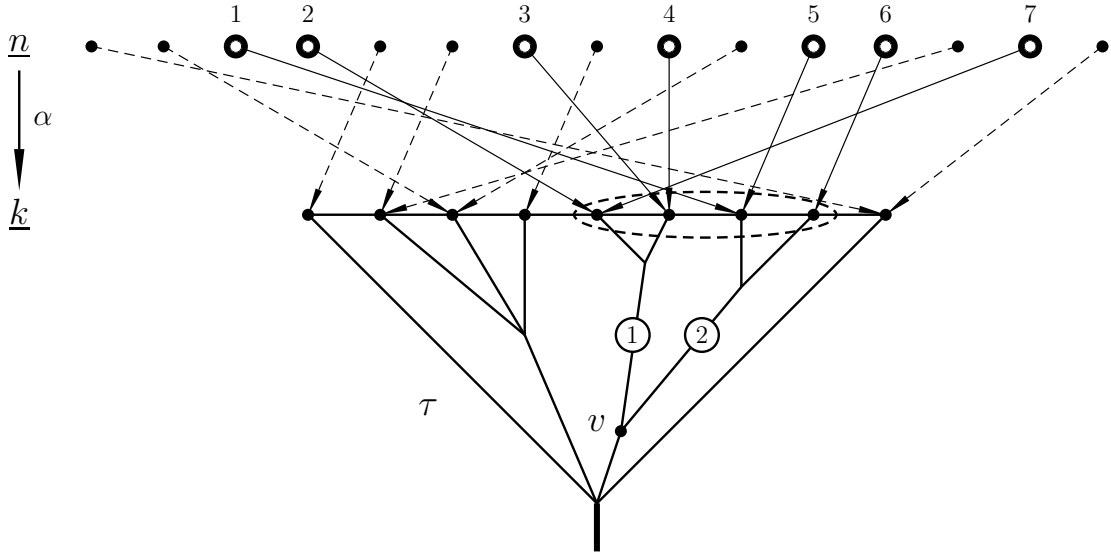
One puts
\[
\F(E)(\alpha) = \bigoplus_{s \geq 0}\F^s(E)(\alpha)
\]
with
\[
\F^s(E)(\alpha) := \bigoplus_{\tau \in \Tr^s(\alpha)} \bigotimes_{v \in
  \vert(\tau)}
E(\alpha_v).
\]

For each morphism $f : \alpha' \to \alpha''$ in $\ttP$ with fibers
$\Rada f1{k''}$, a
straightforward calculation reveals the existence of 
the canonical isomorphism
\[
\F(\alpha'') \ot \F(f_1) \ot \cdots \ot \F(f_{k''}) 
\stackrel\cong\longrightarrow  \F(\alpha')
\]
which we take as the structure operation~\eqref{dva_dny_za_sebou} of
the operad $\F(E)$. Notice that if
$E = \des^*(\underline E)$ for some $\underline E \in \Collectord$,
the above general construction coincides with the special one given in
Proposition~\ref{Leopold}.

\begin{remark}
If the reader finds this remark confusing, he or she may safely ignore
it. The $\ttP$-operad $\F(E)$ described above is the operad associated
to the free Markl's $\ttP$-operad under the isomorphisms of the categories
of ordinary and Markl's operads stated in~\hbox{\cite[Theorem~7.4]{Sydney}}
that holds due to the $1$-connectivity assumption; 
cf.~also the notes at the beginning of
Section~\ref{Zase_mam_chripku!}. 
\end{remark}

\begin{example}
\label{Koupil jsem si obtahovaci kamen.}
It is easy to see that 
$\F^0(E)$ is as in~\eqref{Zitra_sraz_s_Nimou.} and $\F^1(E) \cong
E$. To describe~$\F^2(E)$
we call, following~\cite[Definition~2.9]{Sydney}, a map $f:  \alpha \to \beta
\in \ttP$  {\em elementary\/} if all its fibers except precisely
one, say the $i$th fiber~$F$, are trivial, i.e.\ the chosen terminal
objects. We express this situation
by writing $F \fib_i \alpha \stackrel f\to \beta$ or simply $F \fib \alpha
\stackrel f\to \beta$ when $i$ is understood.
We leave as an easy exercise to verify that, with this notation, 
\begin{equation}
\label{Volal_Mikes.}
\F^2(E)(\alpha) = \bigoplus_
{F \fib \alpha \stackrel f\to \beta}
E(\beta) \otimes E(F),
\end{equation}
where the summation runs over all elementary maps $f: \alpha \to
\beta$ with $|\beta| \geq 2$.
\end{example}

\section{Minimal model of the terminal $\ttP$-operad and
  homotopy permutads}
\label{Pujdeme_dnes_s_Jarcou_na_muslicky?}

\begin{definition}
A {\em minimal model\/} of  an operad $\oP \in \Alg$ is 
a differential graded (dg)
$\ttP$-operad $\min = (\min, \pa)$
together with a dg $\ttP$-operad morphism $\rho : \min \to \oP$ such
that 
\begin{itemize}
\item [(i)]
the component $\rho(\alpha) : (\min(\alpha),\pa) \to
(\oP(\alpha),\pa=0)$ of $\rho$ is a homology isomorphism for each $\alpha \in
\ttP$, and
\item [(ii)]
the underlying non-dg $\ttP$-operad of $\min$ is free, and the differential
$\pa$ has no constant and linear terms with respect to the natural
grading of $\min$ (the {\em minimality condition\/}).
\end{itemize}
\end{definition}

Minimal models should be particular cofibrant replacements in a
conjectural (semi)model structure on the category of
$\ttP$-operads. For the purposes of applications it however suffices
to realize that minimal models are `special'
cofibrant~\hbox{\cite[Definition~17]{markl:ha}}. This already
guarantees that their algebras are homotopy invariant concepts.

Let $\underline a : \ainf \to \uAss$ be the minimal model 
of the terminal non-$\Sigma$ operad
$\uAss$ governing associative algebras. 
As we know from~\cite[Example~4.8]{markl:zebrulka},   
$\ainf$ is generated by the collection
$\underline E$ defined~by
\begin{equation}
\label{Za_necely_tyden_lettim_do_Ciny.}
\underline E (k) := \Span(\xi_{k-2}),\ \deg(\xi_{k-2}) := k\!-\!2,\ k \geq 2, 
\end{equation}
with the differential acting on the generators by the formula
\[
\pa(\xi_r) = \sum (-1)^{(b+1)(i+1)+b}\cdot \xi_a \circ_i \xi_b, \ r \geq 0,
\]
where the summation runs over all $a,b\geq 0$ with $a\+b=r\-1$,
$1\leq i\leq a\+2$, and where $\circ_i$ are the standard partial compositions
in a classical operad.
The dg operad morphism $\underline a : \ainf \to \uAss$
is given by 
\[
\underline a (\xi_r) := 
\begin{cases}
\mu \in \uAss(2) & \hbox {if $r=0$, and}
\\
0 & \hbox {otherwise,}
\end{cases}
\]
where $\mu \in \uAss(2)$ is the generator of $\uAss$.
Notice that $\Pterm \cong \des^*(\uAss)$.
Define finally $\min := \des^*(\ainf)$, and $\rho: \min \to \Pterm$ by
$\rho := \des^*(\underline a)$.

\begin{theorem}
\label{Vcera_jsem_se_vratil_z_Ciny.}
The dg $\ttP$-operad map $\rho : \min \to \Pterm$ defined above is a
minimal model of the terminal operad $\Pterm$.
\end{theorem}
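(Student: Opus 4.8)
The plan is to reduce the entire statement to the classical fact, recalled above from \cite[Example~4.8]{markl:zebrulka}, that $\underline a : \ainf \to \uAss$ is a minimal model of the terminal non-$\Sigma$ operad, exploiting the transparency of restriction along the discrete opfibration $\des : \ttP \to \Ord$ at the level of individual pieces. First I would unwind the restriction functor: for any $\Ord$-operad $\uoO$ and any $\alpha \in \ttP$ with $\des(\alpha) = \underline k$, $k = |\alpha|$, one has by definition $\des^*(\uoO)(\alpha) = \uoO(\underline k)$, and this equality carries the differential and the weight grading of $\uoO(\underline k)$ over to $\des^*(\uoO)(\alpha)$, since $\des$ is a strict operadic functor and the restriction is taken componentwise. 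In particular $\min(\alpha) = \ainf(\underline k)$ and $\Pterm(\alpha) = \uAss(\underline k)$ as dg vector spaces, and $\rho(\alpha) = \des^*(\underline a)(\alpha)$ is precisely the component $\underline a(\underline k) : \ainf(\underline k) \to \uAss(\underline k)$.

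To verify condition~(i) of the definition of a minimal model, I would simply invoke that each $\underline a(\underline k)$ is a homology isomorphism, which holds because $\underline a$ is a minimal model of $\uAss$. As $\rho(\alpha)$ is literally this map for $k = |\alpha|$, condition~(i) follows for every $\alpha \in \ttP$.

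For condition~(ii), freeness of the underlying non-dg $\ttP$-operad is exactly Proposition~\ref{Leopold}: writing $\ainf = \uF(\underline E)$ with $\underline E$ the generating collection of \eqref{Za_necely_tyden_lettim_do_Ciny.} and $E := \des^*(\underline E)$, one gets $\min = \des^*(\uF(\underline E)) = \F(E)$, the free $\ttP$-operad on $E$. The same proposition furnishes the weight grading $\min = \bigoplus_{s \geq 0} \F^s(E)$ with $\F^1(E) = E$, and under the identification $\min(\alpha) = \ainf(\underline k)$ this grading matches componentwise the grading $\ainf = \bigoplus_{s \geq 0}\uF^s(\underline E)$. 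Since the differential of $\min$ is $\des^*$ of the differential of $\ainf$, and the latter sends each weight-one generator to a sum of weight-two elements (it carries no constant or linear term, by minimality of $\ainf$), the differential of $\min$ has the same property; this is the minimality condition.

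The only place any care is needed is the compatibility recorded in the first paragraph, namely that $\des^*$ transports the differential and the weight grading of $\ainf$ to those of $\min$ componentwise. Once this transparency is established, all three requirements---homology isomorphism, freeness, and vanishing of the constant and linear parts of the differential---transfer directly from the classical operad $\ainf$, and the theorem is immediate, with no computation beyond what is already contained in the minimal model of $\uAss$.
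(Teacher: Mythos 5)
Your proposal is correct and follows essentially the same route as the paper: both reduce everything to the classical minimal model $\underline a : \ainf \to \uAss$ via the componentwise transparency of $\des^*$, using Proposition~\ref{Leopold} for freeness and the identification $\rho(\alpha) = \underline a(k)$ for the homology isomorphism. The only difference is that the paper, rather than arguing minimality abstractly from the compatibility of weight gradings as you do, spends most of its proof deriving the explicit quadratic formula~\eqref{Mam_porad_rymu.} for $\pa(\xi_\alpha)$ (which it needs later anyway), from which minimality is read off directly.
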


\begin{proof}
The claim is almost obvious, but we still want to give some details,
namely a formula for the differential $\pa$ in $\min$. 
Given $\alpha : \underline n \epi
\underline k \in \ttP$, $n \geq k \geq 1$, one has, by definition,
\[
\min(\alpha) = \des^*(\ainf)(\alpha) = \ainf(k)
\]
while
\[
\Pterm(\alpha) \cong \des^*(\uAss)(\alpha) = \uAss(k).
\]
Under these identifications, the component $\rho(\alpha) : \min(\alpha) \to
\Pterm(\alpha)$ of the map $\rho$ equals 
\[
\underline 
a(k) : \ainf(k) \longrightarrow \uAss(k),
\] 
which is a homology isomorphism since $\ainf$ is the minimal model of~$\uAss$.
As a non-dg $\ttP$-operad, $\min$ is free, generated by the collection $E$
defined by
\[
E(\alpha) := \Span(\xi_{k-2}),\ \hbox { for }
\alpha : \underline n \epi \underline
k,\
k \geq 2,
\]
where the $\xi$'s are the same as in~\eqref{Za_necely_tyden_lettim_do_Ciny.}. 

Let us denote by $\xi_\alpha$ the replica of  $\xi_{k-2}$ in
$E(\alpha)$ above. Our next task will be to describe $\pa( \xi_\alpha) \in
\min(\alpha)$. For natural numbers $k, a,b \in \bbN$ such that
\begin{equation}
\label{Mark_nekomunikuje.}
k = a+b+3 \geq 2
\end{equation}
we define the map
$\f^{a,b}_i : \underline k \to \underline {a\!+\!2} \in
\Ord$ by the formula
\[
\f^{a,b}_i(j) : =
\begin{cases}
j & \hbox {for $1 \leq j \leq i-1$}
\\
i & \hbox {for $i \leq j \leq i+b+1$, and}   
\\
j\! -\!b\! -\!1  & \hbox {for $i+b+2 \leq j\leq k$.}   
\end{cases}
\]
Loosely speaking, 
$\f^{a,b}_i$ shrinks the interval $\{i,\ldots,i\!+\!b\!+\!1\} \subset  \underline
k$ to $\{i\}$. 

Let $\alpha : \underline n \epi \underline k\in \ttP$. By the
opfibration property of $\des : \ttP \to \Ord$, 
there exists a unique $\alpha^{a,b}_i : \underline n \epi    
\underline {a\!+\!2}  \in \ttP$  and a unique morphism  $f^{a,b}_i : \alpha \to
\alpha^{a,b}_i$ such that $\des(f^{a,b}_i) = \f^{a,b}_i$. Explicit
formulas for  $\alpha^{a,b}_i$ and  $f^{a,b}_i$ can be given~easily.

All fibers of $\f^{a,b}_i$ are trivial (i.e.\ unique surjections
to $\underline 1$) except the $i$th one which we denote by 
$F^{a,b}_i$.
Since $\min(U) \cong \F(E)(U) \cong \bbk$ for trivial $U$'s as
in~\eqref{Zprava_na_grantu.}, the structure
map~\eqref{dva_dny_za_sebou} 
for $\min$ corresponding to
$\f^{a,b}_i$ reduces to the `partial composition'
\[
\circ_i : \min(\alpha^{a,b}_i) \ot \min(F^{a,b}_i)
\longrightarrow \min(\alpha).
\]
The formula for the differential then reads
\begin{equation}
\label{Mam_porad_rymu.}
\pa(\xi_\alpha) = \sum (-1)^{(b+1)(i+1)+b}\cdot \xi_{\alpha^{a,b}_i} 
\circ_i
\xi_{F^{a,b}_i},
\end{equation}
where the summation runs over $k, a,b \in \bbN$ as
in~\eqref{Mark_nekomunikuje.} and $\xi_{\alpha^{a,b}_i}$
(resp.~$\xi_{F^{a,b}_i}$) are the replicas of $\xi_a$
(resp.~$\xi_b$) in $E(\alpha^{a,b}_i)$ (resp.~in $E(F^{a,b}_i)$). 
Formula~\eqref{Mam_porad_rymu.} makes the quadraticity of the
differential $\pa$ in $\min$, and therefore
its minimality in particular, manifest. 
\end{proof}

Formula~\eqref{Mam_porad_rymu.} can be written in a more intelligent
way.  Recall from Example~\ref{Koupil jsem si obtahovaci kamen.} that 
$F \fib_i \alpha \stackrel f\to \beta$ expresses that $f$ is an
elementary morphism whose only nontrivial fiber $F$ is the $i$th one. 
One then may
rewrite~\eqref{Mam_porad_rymu.} as
\begin{equation}
\label{Mam_furt_rymu.}
\textstyle
\pa(\xi_\alpha) = \sum_{F \fib_i \alpha \stackrel f\to \beta}
 (-1)^{(|F|+1)(i+1)+|F|}\cdot \xi_{\beta} 
\circ_i
\xi_{F},
\end{equation}
where the summation runs over all elementary maps $F \fib_i \alpha
\stackrel f\to \beta$ such that $|\beta| \geq 2$.
Following the philosophy of~\cite[Section~4]{markl:zebrulka}, 
we formulate

\begin{definition}
A {\em strongly
homotopy permutad\/} is an algebra for the minimal model $\min$ of~$\Pterm$.
\end{definition}

\begin{remark}
Notice that to both~\eqref{Mam_porad_rymu.} and~\eqref{Mam_furt_rymu.}
only partial compositions
enter. This is because $\min$ is isomorphic to the
dual dg operad of the Koszul dual of $\Pterm$, 
cf.\  the notes at the beginning
of Section~\ref{Zase_mam_chripku!} and Corollary~\ref{Zavolam_Jarce?}.
\end{remark}

Strongly homotopy permutads can be described directly via their
structure operations:

\begin{proposition}
\label{Klasicka_poprijezdova_deprese}
A strongly homotopy permutad is a collection $A = \coll An$ of dg vector
spaces together
with structure maps
\[
\pi_\alpha : A(\underline n_1) \ot \cdots \ot A(\underline n_k) \longrightarrow
A(\underline n)
\]
of degree $k\!-\!2$ defined for each $\alpha : \underline n \to \underline
k$, $n \geq k \geq 2 \in \ttP$; here $\underline n_i := \inv
\alpha(i)$, $1 \leq i \leq k$.
Moreover, for each such an $\alpha$, the equality
\[
\tag{$P_\alpha$}
\textstyle
\pa(\pi_\alpha) = \sum_{F \fib_i \alpha \stackrel f\to \beta}
 (-1)^{(|F|+1)(i+1)+|F|}\cdot \pi_{\beta} 
\circ_i
\pi_{F}
\]
is satisfied. Here the summation is the same as
in~\eqref{Mam_furt_rymu.}, $\pi_{\beta} 
\circ_i \pi_{F}$ is the multilinear function obtained by inserting
$\pi_{F}$ into the $i$th slot of $\pi_{\beta}$, and $\pa$ in the left
hand side is the differential on the endomorphism complex 
induced by the differential of
$\coll An$.   
\end{proposition}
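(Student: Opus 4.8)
The plan is to unwind the definition of an algebra over the free dg $\ttP$-operad $\min$, using the explicit description of $\min$ supplied by Theorem~\ref{Vcera_jsem_se_vratil_z_Ciny.} and in particular its differential~\eqref{Mam_furt_rymu.}. There are exactly two layers to disentangle: the underlying graded (differential-forgetting) algebra structure, which by freeness is unconstrained, and the chain-map condition, which will produce the relation $(P_\alpha)$.

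First I would record what a graded $\min$-algebra is. By Theorem~\ref{Vcera_jsem_se_vratil_z_Ciny.} the underlying non-dg $\ttP$-operad of $\min$ is free, generated by the collection $E$ with $E(\alpha) = \Span(\xi_\alpha)$, $\deg(\xi_\alpha) = k\-2$, for $\alpha : \underline n \epi \underline k$, $k \geq 2$. An algebra structure is a $\ttP$-operad morphism $\mu : \min \to \End^{\,\ttP}_A$, and by the freeness expressed through the adjunction of Proposition~\ref{Leopold} such a $\mu$ is uniquely determined by its values on the generators $\xi_\alpha$. Reading off, as in Proposition~\ref{Treti_den_je_kriticky.}, the component of the endomorphism operad at $\alpha$ as $\Lin\big(A(\underline n_1) \ot \cdots \ot A(\underline n_k), A(\underline n)\big)$ with $\underline n_i := \inv\alpha(i)$, I would set
\[
\pi_\alpha := \mu_\alpha(\xi_\alpha) \in \Lin\big(A(\underline n_1) \ot \cdots \ot A(\underline n_k),\ A(\underline n)\big).
\]
Since operad morphisms preserve degree and $\deg(\xi_\alpha) = k\-2$, each $\pi_\alpha$ has degree $k\-2$, and the correspondence $\{\pi_\alpha\} \leftrightarrow \mu$ is a bijection between families of such maps and graded $\min$-algebra structures on $A$. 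This already produces the structure maps and their degrees asserted in the Proposition.

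Second, the genuine differential-graded content is the requirement that $\mu$ be a chain map, i.e.\ $\mu\,\pa_{\min} = \pa_{\End}\,\mu$. Since $\pa_{\min}$ and the endomorphism differential are both derivations and $\mu$ is an operad morphism, the discrepancy $\pa_{\End}\mu - \mu\,\pa_{\min}$ satisfies a Leibniz rule over $\mu$; hence it vanishes identically as soon as it vanishes on the generators $\xi_\alpha$. On a generator the condition reads $\mu(\pa_{\min}\xi_\alpha) = \pa(\pi_\alpha)$, whose right-hand side is exactly the endomorphism-complex differential appearing on the left of $(P_\alpha)$. Substituting~\eqref{Mam_furt_rymu.} for $\pa_{\min}\xi_\alpha$ and using that $\mu$ carries the operadic partial composition $\circ_i$ of $\min$ to slot-insertion in $\End^{\,\ttP}_A$, the left-hand side becomes $\sum_{F \fib_i \alpha \stackrel f\to \beta} (-1)^{(|F|+1)(i+1)+|F|}\, \pi_\beta \circ_i \pi_F$, and equating the two sides gives precisely $(P_\alpha)$.

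The one step demanding care — the main obstacle — is verifying that $\mu$ sends the abstract operadic composition $\xi_\beta \circ_i \xi_F$ of $\min$ to the concrete insertion $\pi_\beta \circ_i \pi_F$ of multilinear maps, with the identical sign. This requires tracing the composition of $\ttP$-operads along the elementary morphism $F \fib_i \alpha \stackrel f\to \beta$ through the description of $\End^{\,\ttP}_A$ extracted in the proof of Proposition~\ref{Treti_den_je_kriticky.}, and checking that it restricts to ordinary $i$th-slot insertion of linear maps. The sign $(-1)^{(|F|+1)(i+1)+|F|}$ is inherited verbatim from~\eqref{Mam_furt_rymu.} and needs no recomputation, so once this identification is in place the remainder is the formal bookkeeping of the free-operad adjunction.
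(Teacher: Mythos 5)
Your proposal is correct and follows exactly the route the paper intends: the paper states Proposition~\ref{Klasicka_poprijezdova_deprese} without proof, treating it as the standard unwinding of ``algebra over a free dg operad'' in the spirit of \cite[Section~4]{markl:zebrulka}, which is precisely what you carry out (freeness reduces a graded $\min$-algebra to the generator images $\pi_\alpha = \mu_\alpha(\xi_\alpha)$, and the chain-map condition checked on generators via~\eqref{Mam_furt_rymu.} yields $(P_\alpha)$). Your identification of the endomorphism-operad components and of partial composition with slot-insertion is the same bookkeeping already done in the proofs of Propositions~\ref{Treti_den_je_kriticky.} and~\ref{V_sobotu_letim_do_Sydney.}, so nothing is missing.
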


\begin{remark}
It can be easily checked that strongly homotopy permutads are algebras
for the minimal model of the associative operad evaluated in
the endomorphism operad that uses the shuffle product of vector
spaces instead of the usual one. The similarity of~($P_\alpha$) with a
formula in~\cite[Example~4.8]{markl:zebrulka} is therefore not surprising.
\end{remark}

\begin{example}
\label{miliarda_komunistu=Cina}
If $|\alpha| = 2$, the sum in~($P_\alpha$) is empty, 
thus $\pa(\xi_\alpha)=0$,  so $\pa(\xi_\alpha)$ is a dg map.
If $\alpha : \underline m \epi \underline 3$, 
the sum in~($P_\alpha$) has two terms, corresponding
to the two possible order-preserving surjections $\underline 3 \epi
\underline 2$. The associated elementary morphisms are
\[
v \fib_1 \alpha \longrightarrow u \ \hbox { and } s \fib_2 \alpha \longrightarrow t, 
\]  
where $u,v,s,t \in \ttP$ are as in
Definition~\ref{V_nedeli_letim_do_Moskvy.}. Axiom~($P_\alpha$) now
takes the form
\[
\pa(\pi_\alpha) = \pi_u \circ_1 \pi_v -  \pi_t \circ_2 \pi_s =
\pi_u(\pi_v \ot \id) - \pi_t(\id \ot \pi_s).
\]
The degree $0$ operations $\pi_r$ for $r \in \ttP$ with $|r|=2$ are
therefore of the same type as the operations $\pcirc_r$ of
Definition~\ref{V_nedeli_letim_do_Moskvy.}, but they satisfy the
`associativity' \eqref{Vratim_se_z_te_Ciny?} only up to the 
homotopy~$\pi_\alpha$. For $|\alpha|=4$, ($P_\alpha$) is a
permutadic version of the Mac Lane's pentagon.
\end{example}

\begin{remark}
\label{V_Koline_jsem_povesil_lustr.}
Let $\K = \{\K_n\}_{n \geq 1}$ be the cellular topological non-$\Sigma$ operad of
the Stasheff's
associahedra~\cite[II.1.6]{markl-shnider-stasheff:book}. 
Then $\M := \des^*(\K)$ is the cellular topological $\ttP$-operad such
that the minimal model $\min$ of $\Pterm$
is isomorphic to the associated cellular chain  $\ttP$-operad
$\CC_*(\M)$. This is where the `hidden associahedron' of the
title of this article hides.
\end{remark}

\begin{question}
The classical recognition theorem~\cite{jds:hahI} states that a  connected
topological space has a weak homotopy type of a based loop space if and only
if it is a topological $\K$-algebra (aka $A_\infty$-space). Does
there exist an analogous statement for topological $\M$-permutads? 
\end{question}

As in the case of $A_\infty$-algebras, the left hand side
of~($P_\alpha$) can be absorbed into the right one
by interpreting $\pa$ as a structure operation. This can be done
by allowing $\pi_\alpha$'s also for $|\alpha| = 1$, i.e.\ for $\alpha$
the terminal object $U_n :\underline n \epi \underline 1$, $n \geq 1$. The
associated structure map then will be a degree $-1$ linear morphism
\[
\pi_n := \pi_{U_n} : A(\underline n) \longrightarrow  A(\underline n).
\]  
Further, we need to allow in the sum of~($P_\alpha$) also
trivial $F$ or $\beta$. The modified axiom reads
\begin{equation}
\tag{$P'_\alpha$}
\textstyle
0 = \sum_{F \fib_i \alpha \stackrel f\to \beta}
 (-1)^{(|F|+1)(i+1)+|F|}\cdot \pi_{\beta} 
\circ_i
\pi_{F}
\end{equation}
For $\alpha = U_n$ it clearly gives $\pi_n^2  = 0$, 
thus $\pi_n$ is a degree $-1$
differential of $A(\underline n)$. For a~general~$\alpha$, the left
hand side of ($P_\alpha)$ is absorbed in the right hand side
of~($P'_\alpha$)
in terms with $|F| = 1$ or $|\beta| = 1$. We leave the details as an
exercise.

\section{Koszul duals of $\ttP$-operads}
\label{Slepim_si_i_410?}

A `classical' operad is binary quadratic if it is generated by 
operations of arity~$2$, and its ideal of relations is
generated by relations of arity $3$. Each such an operad admits its
Koszul, aka quadratic, dual,
cf.~\cite[(2.1.9)]{ginzburg-kapranov:DMJ94}
or~\cite[Definitions~II.3.31 and
II.3.37]{markl-shnider-stasheff:book}. Similar notions
exist for operads in a general operadic 
category~\cite[Section~11]{Sydney}. In the remaining two sections we will
analyze the particular case of operads in $\ttP$. The floor plan is similar
to that of the parallel theory for permutads presented in Sections~\ref{2+3}
and~\ref{Pristi_tyden_mam_spoustu_uradovani.}, so we can afford to be
more telegraphic.  We start with

\begin{definition}
\label{Smutne_narozeniny.}
A $\ttP$-operad $\oP$  is {\em binary quadratic\/} if it is of the
form  
$\oP \cong \F(E)/(R)$, where
\begin{itemize}
\item [(i)]
the generators $E = \{E(\alpha)\}_{\alpha \in \ttP}$ are such that
$E(\alpha) = 0$ if $|\alpha| \not=2$, and
\item [(ii)]
the generators $R$ of the ideal of relations form a subcollection of
$\F^2(E)$. 
\end{itemize}
\end{definition}

In Definition~\ref{Smutne_narozeniny.}, $\F(E)$ is the free
$\ttP$-operad generated by the collection $E$. In concrete examples
treated in the remainder of this section, $E$ will always be the
restriction
$\des^*(\underline E)$ of some $\underline E   \in \Collectord$,
thus it may be, by the virtue of Proposition~\ref{Leopold}, 
realized by the restriction $\des^* (\uF(\underline E))$ of the free
non-$\Sigma$ operad $\uF(\underline E)$. The following notion is
however recalled for an arbitrary $E$. 

The {\em Koszul dual\/}  $\oP^!$ of 
a binary quadratic $\ttP$-operad  
$\oP$~\cite[Definition~11.3]{Sydney} is the quotient
\[
\oP^!:=\F(\susp\, E^*)/(R^\perp), 
\]
where \hbox{$\susp\, E^*$} is the suspension of the  component-wise linear 
dual of the generating
$\ttP$-collection~$E$, and $R^\perp \subset \F^2(\susp E^*)$ is the
component-wise annihilator
of $R\subset \F^2(E)$ in the obvious degree~$-2$ pairing between
\[
\F^2(\susp\, E^*)(\alpha) = \bigoplus_
{F \fib \alpha \stackrel f\to \beta}
\susp\, E(\beta)^* \otimes \susp\, E(F)^*
\cong\ \susp^2 \hskip -.3em\bigoplus_
{F \fib \alpha \stackrel f\to \beta}
E(\beta)^* \otimes E(F)^*
\]
and
\[
\F^2(E)(\alpha) = \bigoplus_
{F \fib \alpha \stackrel f\to \beta}
E(\beta)\, \otimes E(F);
\]
here we use the explicit description of $\F^2(-)$ given in
Example~\ref{Koupil jsem si obtahovaci kamen.}. 
The following statement follows from
\cite[Proposition~14.4]{Sydney} but we will present a proof that uses our
explicit knowledge of the minimal model $\min$ of $\Pterm$ acquired in
Section~\ref{Pujdeme_dnes_s_Jarcou_na_muslicky?}. 

\begin{proposition}
\label{V_sobotu_letim_do_Sydney.}
The operad  $\Pterm$ is binary
quadratic. It is  self-dual in the sense that the category of algebras over
$\Pterm^!$ is isomorphic to the category of permutads via the functor
induced by the suspension of the underlying collection.
\end{proposition}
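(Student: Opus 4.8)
The plan is to attack the two assertions separately. For binary quadraticity, I would exhibit $\Pterm$ as a quotient $\F(E)/(R)$ with generators concentrated in cardinality $2$. Since $\Pterm \cong \des^*(\uAss)$ and $\uAss$ is binary quadratic as a non-$\Sigma$ operad (generated by the binary $\mu$ subject to the associativity relation in arity $3$), the natural move is to set $\underline E(\underline 2) := \Span(\mu)$ and $E := \des^*(\underline E)$, so that $E(\alpha) = 0$ unless $|\alpha| = 2$. Using Proposition~\ref{Leopold}, $\F(E) = \des^*(\uF(\underline E))$, and one checks that the ideal of relations $(R)$ is the $\des^*$-image of the single associativity relation, living in $\F^2(E)$ by the description in Example~\ref{Koupil jsem si obtahovaci kamen.}. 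Verifying that $\Pterm \cong \F(E)/(R)$ then amounts to comparing the $\des^*$ of the presentation $\uAss = \uF(\underline E)/(\text{assoc})$; because restriction along the opfibration $\des$ is exact and monoidal on these pieces, the presentation transports verbatim.

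For self-duality I would compute $\Pterm^!$ explicitly and identify its algebras with suspended permutads. By definition $\Pterm^! = \F(\susp E^*)/(R^\perp)$, and since everything is obtained by $\des^*$ from the non-$\Sigma$ setting, I expect $\Pterm^! \cong \des^*(\uAss^!)$. The classical fact $\uAss^! \cong \uAss$ (quoted from \cite[Thm.~2.1.11]{ginzburg-kapranov:DMJ94}) then gives $\Pterm^! \cong \des^*(\uAss) \cong \Pterm$ at the level of underlying operads, but with a degree shift: the generator $\mu$ of $E$ is replaced by its suspended dual $\susp\mu^*$ in degree $1$, and $R^\perp$ is the annihilator of the associativity relation, which by the self-duality of $\uAss$ is again (a sign-twisted) associativity. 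The key computation is that the pairing between $\F^2(\susp E^*)(\alpha)$ and $\F^2(E)(\alpha)$ described before the proposition decomposes, fiber by fiber, as the classical arity-$3$ pairing for $\uAss$ indexed by the elementary maps $F \fib \alpha \to \beta$; this is exactly parallel to the permutad computation in Proposition~\ref{Dnes_vecer_prijde_Jarka.}.

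To finish, I would translate the operad isomorphism into the stated equivalence of algebra categories. An algebra over $\Pterm^!$ is, by Proposition~\ref{Treti_den_je_kriticky.} applied to the shifted operad, a collection $A$ with structure maps $\pcirc_r$ carrying the suspension-induced signs $\varepsilon(r)$; comparing with the permutadic suspension $\ss$ of Example (the one defining $\sspcirc$ with its $\varepsilon(r)(-1)^{r_2(r_1+\deg a_1)}$ factor), one sees that $A \mapsto \ss A$ sends $\Pterm^!$-algebras to permutads and is invertible. \textbf{The main obstacle} I anticipate is bookkeeping the signs: I must confirm that the signs produced by $R^\perp$ (the annihilator in the degree $-2$ pairing, together with the suspension $\susp$ of the generators) match precisely the signs $\varepsilon(r)(-1)^{r_2(r_1+\deg(a_1))}$ built into the permutadic suspension $\ss$. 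Getting this sign dictionary exactly right—rather than off by a global or position-dependent factor—is the delicate point; everything else follows formally from the opfibration $\des$ and the classical self-duality of $\uAss$.
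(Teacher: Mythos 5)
Your treatment of the first assertion and of the computation of $\Pterm^!$ is correct but follows a genuinely different route from the paper's. You transport the classical binary quadratic presentation of $\uAss$ along $\des^*$, using Proposition~\ref{Leopold} for free operads together with the compatibility of restriction with ideals and quotients; the paper formalizes exactly this in Propositions~\ref{Jarka_mi_pripravuje_smutne_narozeniny.} and~\ref{Lepim_si_L-410.}, which are stated just after the proposition and proved independently of it, so there is no circularity. The paper's own proof instead reads the presentation off the minimal model: $\Pterm \cong H_0(\min)$, the degree-zero part of $\min$ is $\F(E_0)$, and the degree-zero boundaries form the ideal generated by $\pa\xi_\alpha$ with $|\alpha|=3$, which by~\eqref{Mam_furt_rymu.} is exactly the relation $\xi_u \circ_1 \xi_v - \xi_t \circ_2 \xi_s$. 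Your route is the more general one (it is the template for Theorem~\ref{Je_teprve_utery_a_uz_padam_na_usta.}); the paper's makes visible the link between the quadratic presentation and the quadratic differential of~$\min$. The fiberwise identification of the pairing and the resulting presentation $\Pterm^! \cong \F(\susp E_0)/(\xi_u^\uparrow \circ_1 \xi_v^\uparrow + \xi_t^\uparrow \circ_2 \xi_s^\uparrow)$ agree with the paper.

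The final step, however, contains a genuine error. The functor in the statement is the component-wise suspension $A \mapsto\ \susp A$, a single degree shift in every arity; it is not the permutadic suspension $\ss$, which shifts $A(\underline n)$ by $n$ and carries the $\varepsilon(r)$ factors. Moreover, a $\Pterm^!$-algebra is a collection with degree $+1$ operations $\pcirc_r^\uparrow$ subject to the plain anti-associativity $\pcirc_u^\uparrow(\pcirc_v^\uparrow \ot \id) + \pcirc_t^\uparrow(\id \ot \pcirc_s^\uparrow) = 0$; no $\varepsilon(r)$ occurs there. The signs $\varepsilon(r)$ and $(-1)^{r_2(r_1+\deg a_1)}$ you propose to match belong to the Koszul dual \emph{permutad} $\term^!$ of Proposition~\ref{Dnes_vecer_prijde_Jarka.}, which is a different object from the $\ttP$-operad $\Pterm^!$. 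A degree count rules out $\ss$: since $r_1 + r_2 = n$, conjugating a degree $+1$ operation $A(\inv r(1)) \ot A(\inv r(2)) \to A(\underline n)$ by the permutadic suspension changes its degree by $n - r_1 - r_2 = 0$, so $\ss A$ still carries degree $+1$ operations and is not a permutad. The single suspension changes the degree by $1-1-1=-1$, yielding degree $0$ operations, and the Koszul sign from commuting $\desusp \ot \desusp$ past the first argument converts the $+$ of anti-associativity into the associativity~\eqref{Vratim_se_z_te_Ciny?}. The ``sign dictionary'' you flag as the delicate point is therefore not merely delicate but the wrong dictionary; with the component-wise suspension in its place the rest of your argument goes through.
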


\begin{proof}
By Theorem \ref{Vcera_jsem_se_vratil_z_Ciny.}, $\Pterm \cong
H_0(\min)$. Since $\min$ is non-negatively homologically graded, its
suboperad  $Z_0(\min)$ of degree $0$ cycles equals the degree $0$
piece $\min_0$ of $\min$. Clearly $\min_0 = \F(E)_0 \cong \F(E_0)$ where, by the
definition of the generating collection $E$,
\[
E_0(\alpha) = 
\begin{cases}
\Span(\xi_0)  & \hbox {if $|\alpha| = 2$, and}
\\
0 & \hbox {otherwise.}   
\end{cases}
\]
Likewise, $\min_1 = \F(E)_1$ consists of compositions of some number
of elements of $E_0$ and precisely one element of $E_1$, where
\[
E_1(\alpha) = 
\begin{cases}
\Span(\xi_1)  & \hbox {if $|\alpha| = 3$, and}
\\
0 & \hbox {otherwise.}   
\end{cases}
\]

Thus $B_0(\min) = {\rm Im}(\pa : \min_1 \to \min_0)$ equals the
ideal generated by $\pa(\xi_\alpha)$, $|\alpha| = 3$, where
$\xi_\alpha$ is the replica of $\xi_1$ in $E(\alpha)$.
Using formula~\eqref{Mam_furt_rymu.} for the differential, we conclude that
\[
\Pterm \cong \F(E_0)/(\xi_u \circ_1 \xi_v - \xi_t \circ_2 \xi_s),
\]
where $u,v,s$ and $t$ have the same meaning as in Example
\ref{miliarda_komunistu=Cina}. This is the required binary quadratic
presentation of the terminal $\ttP$-operad.
The natural  pairing 
\[
\F^2(\susp E^*) \ot \F^2(\susp E)^* \longrightarrow \bbk
\]
is given by 
\[
\langle \xi_a \circ_i \xi_b\ |\ \xi_c^\uparrow \circ_j \xi_d^\uparrow
\rangle
:=
\begin{cases}
1 \in \bbk &\hbox{if $a=c$, $b=d$ and $i=j$, while}
\\
0 &\hbox{otherwise.}   
\end{cases}
\]
In the above formula, the up-going arrow indicates the corresponding
suspended generator. 
One thus immediately gets 
\[
\Pterm^! \cong \F(\susp E_0)/
(\xi_u^\uparrow \circ_1 \xi_v^\uparrow + \xi_t^\uparrow \circ_2 \xi_s^\uparrow).
\]
As in the proof of Proposition~\ref{Treti_den_je_kriticky.} we verify
that $\Pterm^!$-algebras are collections $A = \coll An$ equipped with 
degree $+1$ operations 
\[
\pcirc_r^\uparrow : A(\inv r(1)) \ot  A(\inv r(2)) \longrightarrow A(\underline n)
\]
satisfying the `anti-associativity'
\[
\pcirc_u^\uparrow(\pcirc_v^\uparrow \ot \id) +
\pcirc_t^\uparrow(\id \ot \pcirc_s^\uparrow) = 0.
\]

Let  \hbox{$\susp A := \coll{\susp A}n$} be the component-wise suspension of
the collection $A$. It is straightforward to see that the operations
\[
\pcirc_r :
\susp A(\inv r(1)) \ot  \susp A(\inv r(2)) \stackrel{\desusp \ot
  \desusp}\longrightarrow 
 A(\inv r(1)) \ot  A(\inv r(2)) \longrightarrow A(\underline n)  
\stackrel{\susp}\longrightarrow \susp A(\underline n) 
\] 
make $\susp A$ a permutad. The correspondence \hbox{$(A,\pcirc_r^\uparrow)
\mapsto  (\uparrow A,\pcirc_r)$} is clearly an isomorphism between the
categories of $\Pterm^!$-algebras and $\Pterm$-algebras.
One may in fact show that  the `operadic desuspension'
${\mathbf s}^{-1}: \Pterm^! \to \Pterm$ defined by ${\mathbf s}^{-1} (\alpha): =\
\desusp^{|\alpha| -2}$  is an isomorphism of $\ttP$-operads.
\end{proof}

The following proposition will be formulated for 
any strict operadic
functor~\cite[p.~1635]{duodel} between 
arbitrary operadic categories, but the reader might as
well consider only the case of $\des: \ttP \to \Ord$.

\begin{proposition}
\label{Jarka_mi_pripravuje_smutne_narozeniny.}
Let  $p:\ttO\to {\tt P}$ be a strict operadic functor,
$\oP$ a ${\tt P}$-operad and $\J \subset \oP$ an ideal. Then
\begin{itemize}
\item [(i)]
the restriction $p^*(\J)$ is an ideal in the $\ttO$-operad $p^*(\oP)$.
\item [(ii)]
If $\J$ is generated by $G \subset \J$, then  $p^*(\J)$ is
generated by the restriction $p^*(G)$. Finally, 
\item [(iii)]
the restriction $p^*(\oP/\J)$ of the quotient $\oP/\J$ is isomorphic
to $p^*(\oP)/p^*(\J)$.
\end{itemize}
\end{proposition}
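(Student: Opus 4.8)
The plan is to prove all three items objectwise, exploiting that the restriction $p^*$ is a pointwise construction: for $t \in \ttO$ one has $p^*(\oP)(t) = \oP(p(t))$, and, since a strict operadic functor preserves cardinalities and commutes with the formation of fibers, the structure operation $m_g$ of $p^*(\oP)$ attached to a morphism $g : t' \to t''$ of $\ttO$ coincides, under the identifications $p^*(\oP)(g_i) = \oP(p(g)_i)$, with the structure operation $m_{p(g)}$ of $\oP$. Thus every structure operation of $p^*(\oP)$ is one of the structure operations of $\oP$, read off along $p$, and this single observation reduces each assertion to a property of $\oP$ already available in ${\tt P}$. For (i) I would note that $p^*(\J)(t) = \J(p(t)) \subseteq \oP(p(t)) = p^*(\oP)(t)$ is a subcollection, and that its closure under composition is inherited: if an input of $m_g$ lies in $p^*(\J)$, then the same tensor, regarded as an input of $m_{p(g)}$, has a factor in $\J$, so its value lies in $\J(p(t')) = p^*(\J)(t')$ because $\J$ is an ideal of $\oP$. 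Hence $p^*(\J)$ absorbs compositions and is an ideal.

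Part (iii) is equally formal and requires nothing beyond strictness of $p$ together with (i). Quotient operads are formed objectwise with induced operations, so $(\oP/\J)(p(t)) = \oP(p(t))/\J(p(t))$; restricting and comparing with $(p^*(\oP)/p^*(\J))(t) = \oP(p(t))/\J(p(t))$ exhibits the same underlying collection on both sides, and on each the structure operations are those of $\oP$ read along $p$ and passed to the quotient. I would therefore record the canonical objectwise identity as the desired isomorphism $p^*(\oP/\J) \cong p^*(\oP)/p^*(\J)$ of $\ttO$-operads.

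The substance of the proposition lies in the reverse inclusion of (ii). The inclusion $(p^*(G)) \subseteq p^*(\J)$ is immediate, since $p^*(G) \subseteq p^*(\J)$ and $p^*(\J)$ is an ideal by (i). For the converse I would use that, by associativity of the operad composition, every element of $\J(p(t)) = (G)(p(t))$ is a sum of single compositions $m_f(\ldots)$ indexed by morphisms $f$ out of $p(t)$ in ${\tt P}$, with one tensor factor taken from $G$; to place such an element in $(p^*(G))(t)$ I must realize $m_f$ as a structure operation of $p^*(\oP)$ over $t$, that is, produce $g$ out of $t$ in $\ttO$ with $p(g)=f$, its fibers then satisfying $p(g_i)=f_i$ so that the distinguished factor lands in $p^*(G)$. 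This lifting of morphisms out of $p(t)$ along $p$ is the main obstacle, and it is the one point where a property of $p$ stronger than bare strictness enters: it is provided, uniquely, by the discrete opfibration property, which holds for $\des : \ttP \to \Ord$ (and in the generality relevant to our applications) by the lifting recalled in the proof of Proposition~\ref{Leopold}. Granting the lift, the generating composition over $p(t)$ pulls back to a composition over $t$ carrying a $p^*(G)$-factor, which yields $p^*(\J) \subseteq (p^*(G))$ and completes (ii).
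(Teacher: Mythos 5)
Your argument is correct and, for items (i) and (iii), follows essentially the route the paper takes: the paper proves only (iii), by applying the functor $p^*$ to the projection $\oP \epi \oP/\J$ and identifying its kernel objectwise with $p^*(\J)$ via $p^*(\oP)(t)=\oP(p(t))$, which is the same pointwise identification you record; items (i) and (ii) are dismissed there as following ``easily from the basic properties of operads in operadic categories and their ideals.'' Where you genuinely add something is in item (ii). You correctly observe that the inclusion $(p^*(G))\subseteq p^*(\J)$ is formal, while the reverse inclusion requires expressing an element of $\J(p(t))$ --- a sum of compositions $m_f(\cdots)$ indexed by morphisms $f$ out of $p(t)$ in ${\tt P}$ with a factor in $G$ --- as a composition indexed by a morphism out of $t$ in $\ttO$, and hence requires lifting $f$ along $p$. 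Bare strictness of $p$ does not provide such lifts (a strict operadic functor need not be surjective on morphisms out of a given object), so your appeal to the discrete opfibration property is exactly the right ingredient, and it covers the only case the paper ever uses, namely $\des:\ttP\to\Ord$. In effect you have identified that the proposition's stated generality (``any strict operadic functor'') is broader than what the sketched argument for (ii) supports; your version, restricted to discrete opfibrations, is the one that is both proved and needed downstream. No gap in your reasoning --- if anything, you have been more careful than the source.
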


\begin{proof}
Items (i) and (ii) easily follow from the basic properties of operads
in operadic categories and their ideals. Let us establish item
(iii). Since the restriction $p^*$ 
is a functor \hbox{$\Op{\tt P} \to \Op\ttO$} between the categories of
operads~\cite[p.~1639]{duodel},
applying it to the projection $\pi : \oP \epi \oP/\J$ leads to an
operad morphism
\[
p^*(\pi) : p^*(\oP) \longrightarrow p^*(\oP/\J),
\] 
which is clearly surjective.

We want to prove that $\Ker(p^*(\pi)) \cong p^*(\J)$.
Assume that $u \in p^*(\oP)(t)$ for some $t \in \ttO$ is such that $p^*(\pi)(u) =
0$. By the definition of the restriction, $p^*(\oP)(t) = \oP(T)$ with
$T := p(t)$ and, likewise,  
$p^*(\oP/\J)(t) = (\oP/\J)(T) = \oP(T)/\J(T)$. Under this
identification, $p^*(\pi)$ acts as the projection  
$\oP(T) \epi (\oP/\J)(T) = (\oP(T)/\J(T))$, so $u \in \J(T)) = p^*(\J)(t)$.
\end{proof}

The following corollary can be stated for any 
discrete opfibration $p:\ttO\to {\tt P}$, but for our purposes
the case of $\des : \ttP \to \Ord$ for which we recalled all the relevant
notions will be sufficient. 

\begin{proposition}
\label{Lepim_si_L-410.}
Let $\underline \oP$ be a binary quadratic non-$\Sigma$-operad. Then
$\des^*(\underline \oP)$ is binary quadratic and, moreover,
$\des^*(\underline \oP)^! \cong \des^*(\underline \oP^!)$
\end{proposition}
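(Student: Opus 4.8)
The plan is to begin from a quadratic presentation of $\underline\oP$ over $\Ord$ and transport it through the restriction functor $\des^*$, relying on the two structural results already at hand: the identification of free $\ttP$-operads with restricted free non-$\Sigma$ operads in Proposition~\ref{Leopold}, and the compatibility of $\des^*$ with ideals and quotients in Proposition~\ref{Jarka_mi_pripravuje_smutne_narozeniny.}. Concretely, I would write $\underline\oP \cong \uF(\underline E)/(\underline R)$ with $\underline E \in \Collectord$ concentrated in arity~$2$ and $\underline R \subset \uF^2(\underline E)$, recalling that quadraticity forces $\uF^2(\underline E)$ to be concentrated in arity~$3$.

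First I would settle binary quadraticity. Putting $E := \des^*(\underline E)$, the value $E(\alpha) = \underline E(\des\alpha)$ vanishes unless $|\alpha| = 2$, which is condition~(i) of Definition~\ref{Smutne_narozeniny.}. By Proposition~\ref{Leopold} one has $\F(E) = \des^*(\uF(\underline E))$, and the point I would emphasize is that this identification carries the $\uF$-grading onto the $\F$-grading level-wise, so that $\F^s(E)(\alpha) = \uF^s(\underline E)(\des\alpha)$ and in particular $\F^2(E) = \des^*(\uF^2(\underline E))$; hence $\des^*(\underline R) \subset \F^2(E)$, which is condition~(ii). Items~(ii) and~(iii) of Proposition~\ref{Jarka_mi_pripravuje_smutne_narozeniny.} then give
\[
\des^*(\underline\oP) \;\cong\; \F(E)/(\des^*(\underline R)),
\]
exhibiting the required presentation with relations $R := \des^*(\underline R)$.

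The more delicate half is the isomorphism of Koszul duals. By definition the left-hand side is $\F(\susp E^*)/(R^\perp)$. Since linear duals and suspensions are taken degree-wise, $\susp E^* = \des^*(\susp \underline E^*)$, and the same grading compatibility gives $\F^2(\susp E^*) = \des^*(\uF^2(\susp \underline E^*))$; thus every object in play is a $\des$-restriction, and a final application of Proposition~\ref{Jarka_mi_pripravuje_smutne_narozeniny.} reduces the entire claim to the single equality of annihilators
\[
R^\perp \;=\; (\des^*\underline R)^\perp \;=\; \des^*(\underline R^\perp).
\]
This is the step I expect to be the main obstacle: I must verify that the fibrewise degree~$-2$ pairing defining $R^\perp$ over $\ttP$ is precisely the $\des$-pullback of the pairing defining $\underline R^\perp$ over $\Ord$. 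To do this I would fix $\alpha$ with $|\alpha| = 3$, the only cardinality on which these spaces are nonzero, and use the opfibration property of $\des$ to identify the two elementary morphisms $F \fib \alpha \to \beta$ with $|\beta| = 2$ as the unique lifts of the two order-preserving surjections $\underline 3 \epi \underline 2$, that is, with the partial compositions $\circ_1$ and $\circ_2$ indexing the summands of $\uF^2(\underline E)(\underline 3)$ (cf.\ the description of $\F^2$ in Example~\ref{Koupil jsem si obtahovaci kamen.}). Under the induced identifications $\F^2(E)(\alpha) = \uF^2(\underline E)(\underline 3)$ and $\F^2(\susp E^*)(\alpha) = \uF^2(\susp \underline E^*)(\underline 3)$, both pairings become the same diagonal pairing matching a composite with its suspended dual, so that forming annihilators commutes with $\des^*$. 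Granting this, combining the displayed equality with Proposition~\ref{Jarka_mi_pripravuje_smutne_narozeniny.}(iii) yields $\des^*(\underline\oP)^! \cong \des^*(\underline\oP^!)$, completing the argument.
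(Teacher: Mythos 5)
Your proposal is correct and follows essentially the same route as the paper: present $\underline\oP$ as $\uF(\underline E)/(\underline R)$, use Proposition~\ref{Leopold} to identify $\des^*(\uF(\underline E))$ with $\F(\des^*\underline E)$, apply Proposition~\ref{Jarka_mi_pripravuje_smutne_narozeniny.} to pass to the quotient, and reduce the Koszul-dual statement to the compatibilities $\des^*(\susp\,\underline E^*)\cong\susp(\des^*\underline E)^*$ and $\des^*(\underline R^\perp)\cong(\des^*\underline R)^\perp$. The only difference is that you spell out the verification of the annihilator identity via the opfibration lifts of the two surjections $\underline 3\epi\underline 2$, a step the paper dismisses with ``can be checked directly.''
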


\begin{proof}
Suppose that $\underline \oP = \uF(\underline  E)/(\underline R)$ is a binary
quadratic non-$\Sigma$ operad. Then, by
Proposition~\ref{Leopold}, $\des^*(\uF(\underline  E))$ is the free
$\ttP$-operad $\F(E)$ on the restriction $E := \des^*(\underline E)$ thus, by
Proposition~\ref{Jarka_mi_pripravuje_smutne_narozeniny.}, 
\[
\des^*(\underline \oP) \cong \des^*(\uF(\underline  E))/(\des^*\underline
R)
= \F(E)/(R),
\]
with $R := \des^*(\underline R)$. It is clear from definitions that
$\F(E)/(R)$ is a binary quadratic presentation of the $\ttP$-operad $\oP
:= \des^*(\underline \oP)$.
The second part of the proposition follows from
the canonical isomorphisms 
\[
\des^*(\susp\, \underline E^*) \cong\ \susp\,( \des^*(\underline E))^*
\ \hbox { and } \
\des^*(\underline R^\perp) \cong (\des^*(\underline R))^\perp
\]
which can be checked directly.
\end{proof}

\section{Koszulity of $\ttP$-operads}
\label{Zase_mam_chripku!}

Operads are classically presented as collections  $\uoS =
\{\uoS(n)\}_{n\geq 1}$  of dg vector spaces
with structure operations
\begin{equation}
\label{Dostal jsem obrovskou lahev rumu.}
{\underline m}_{\, \Rada n1k} : \uoS(k) \ot \uoS(n_1) \ot \cdots \ot \uoS(n_k)
\longrightarrow 
\uoS(n_1 \+ \cdots \+ n_k)
\end{equation} 
specified for any $k, \Rada n1k \geq 1$. One also assumes 
a choice of the unit map \hbox{$\eta : \bfk \to  \uoS(1)$}. 
In \cite[Def.~1.1]{markl:zebrulka} the author
introduced an alternative definition, replacing the operations
in~\eqref{Dostal jsem obrovskou lahev rumu.} by the bilinear ones of the
form
\begin{equation}
\label{letel_jsem_ve_snehovych_prehankach}
\circ_i : \uoS(k) \ot \uoS(n) \longrightarrow \uoS(k\+n\-1), \ n \geq
1,\
1 \leq i \leq k.
\end{equation} 

In this setup, the grading of free operads is given 
by the number of structure operations applied to the generators minus
one, so the operations
in~\eqref{letel_jsem_ve_snehovych_prehankach} naturally assemble into
a~single linear map
\begin{subequations}
\begin{equation}
\label{Co to mam s tlakem?}
\circ: \uF^2(\uoS) \to \uoS
\end{equation}
from the grade-$2$ part of the free operad $\uF(\uoS)$ generated by $\uoS$ to
$\uoS$. Likewise, for a cooperad $\uoC$ with structure operations of
the form
\[
\Delta_i : \uoC (k\+n\-1) \longrightarrow \uoC(k) \ot \uoC(n),\ \ k,n \geq
1,\
1 \leq i \leq k, 
\]
 dual to~(\ref{letel_jsem_ve_snehovych_prehankach}), one has a naturally induced map
\begin{equation}
\label{dnes ohnicek s detmi}
\underline \Delta : \uoC \to \uF^2(\uoC).
\end{equation}
\end{subequations}
This enables one to define (co)bar constructions of (co)operads
by mimicking the analogous classical constructions for
(co)algebras, cf.~also the constructions for permutads in
Section~\ref{Pristi_tyden_mam_spoustu_uradovani.}. 

Partial compositions $\circ_i$ are expressed via the operations 
in~\eqref{Dostal jsem obrovskou lahev rumu.} as
\begin{subequations}
\begin{equation}
\label{dve hodiny na Blaniku}
p \circ_i q := {\underline m}_{\,1,\ldots,n,\ldots ,1}\big(p \ot e^{\ot (i-1)} \ot q \ot e^{\ot (k-i)}\big),\ 
e := \eta(1), \ p \in \uoS(k), \ q \in \uoS(n).
\end{equation}
An element-free definition of $\circ_i$ is provided by the diagram
\begin{equation}
\label{Jsem s Fukem v Koline.}
\xymatrix@C=3em{\uoS(k) \ot \uoS(n)  \ar[r]^(.35)\cong&\ar[d]
 \uoS(k) \ot \bfk\ot \cdots \ot \uoS(n) \ot
\cdots  \ot \bfk
\\
& \uoS(k) \ot \uoS(1) \ot \cdots \ot \uoS(n) \ot
\cdots  \ot \uoS(1)\  \ar[r]^(.69){ {\underline m}_{1,\ldots,n,\ldots ,1}}
&\ \uoS(k\+n\-1)
}
\end{equation}
\end{subequations}
whose vertical map is the product of the identities and
the unit morphisms $\eta : \bfk \to \uoS(1)$.
For classical unital operads, both approaches are equivalent -- 
the operations in~\eqref{Dostal jsem obrovskou lahev rumu.} can be
expressed via the $\circ_i$-operations as
\[
{\underline m}_{\,\Rada n1k}(a \ot b_1 \ot \cdots \ot b_k) :=
{\varepsilon}\cdot (\cdots ((a \circ_k b_k)\circ_{k-1} b_{k-1})\circ_{k-2}
\cdots b_2)\circ_1 b_1,
\]
where $a \in \uoS(k)$, $b_i \in \uoS(n_i)$ for $1 \leq i
\leq k$, and $\varepsilon = \pm 1$ is the Koszul sign of the permutation $(\Rada b1k) 
\mapsto (\Rada bk1)$. Operad-like
structures based on `partial compositions' as
in~\eqref{letel_jsem_ve_snehovych_prehankach} were later called
Markl's operads.  

The arrays
$\Rada n1k$ indexing the operations 
in~\eqref{Dostal jsem obrovskou lahev rumu.} are in  
one-to-one correspondence 
with order-preserving epimorphisms $\gamma:   \underline
m \epi \underline k \in \Ord$; such a $\gamma$ determines 
the sequence $\Rada n1k$ in
which $n_j$ is the cardinality of the set-theoretic preimage
$\gamma^{-1}(j)$, $1 \leq j \leq k$. Notice that the definition of
$\circ_i$ given in~\eqref{dve hodiny na Blaniku} or~\eqref{Jsem s
  Fukem v Koline.} uses $\gamma$'s of special types only, namely those for
which the cardinality of the preimage $\gamma^{-1}(i)$ is $> 1$ only
for $j=i$. Such $\gamma$'s are prototypes of elementary maps in
operadic categories.

Operads in general operadic categories also exist in two
disguises which are, under favorable conditions, equivalent -- 
in a form where the compositions in all inputs are
made simultaneously; this is how they were introduced
in~\cite{duodel} -- and in Markl's form where
they are  performed one after one. 
The crucial advantage of Markl's form is, as in the classical case,
that free Markl's operads are naturally graded by the length of
the chain of compositions and that the structure operations assemble
into maps of the form~(\ref{Co to mam s tlakem?}) for operads resp.~(\ref{dnes
  ohnicek s detmi}) for cooperads.
This will be used in the definition of 
the central object of this section,
the dual dg operad of a $1$-connected
$\ttP$-operad~$\oP$.

Recall from Example~\ref{Koupil jsem si obtahovaci kamen.} 
that a map $f:  \alpha \to \beta  
\in \ttP$  in~\eqref{nikdo_mi_nepopral_k_narozeninam} 
is elementary if all its fibers except precisely
one, say the $i$th fiber~$F$, are trivial, i.e.\ equal to some
local terminal objects $U_{n_s}$, $s \not= i$. 
This fact was recorded by   $F \fib \alpha \stackrel f\to \beta$.
For each such an $f$ we
define, mimicking~\eqref{Jsem s Fukem v Koline.},  
the {\em partial composition\/} 
\[
\circ_f : 
\oP(\beta) \ot \oP(F)  \to \oP(\alpha),\ F \fib \alpha \stackrel
f\longrightarrow \beta, 
\] 
as the composite
\begin{equation}
\label{Zavola?}
\xymatrix@C=3em{\oP(\beta) \ot \oP(F)  \ar[r]^(.35)\cong&\ar[d]
 \oP(\beta) \ot \bfk\ot \cdots \ot \oP(F) \ot
\cdots  \ot \bfk
\\
& \oP(\beta) \ot \oP(U_{n_1}) \ot \cdots \ot \oP(F) \ot
\cdots  \ot \oP(U_{n_{k''}})  \ar[r]^(.8){m_f}
& \oP(\alpha)
}
\end{equation}
in which $m_f$ is the structure map~\eqref{dva_dny_za_sebou} and 
the vertical map is the product of the identities and isomorphisms 
$\oP(U_{n_s}) \cong \bbk$, $s \not=i$,  which follow from the
$1$-connectivity of $\oP$. The partial compositions satisfy
appropriate axioms~\cite[Definition~7.1]{Sydney} derived 
from the properties of $m_f$'s. The structure described above is
Markl's version of a $\ttP$-operad $\oP$.
Under the $1$-connectivity assumptions, the
categories of $\ttP$-operads and Markl's $\ttP$-operads
coincide~\hbox{\cite[Theorem~7.4]{Sydney}}.

A Markl's {\em $\ttP$-cooperad\/} is a collection  $\cC =
\{\cC(\alpha\}_{\alpha \in \ttP}$ with operations
\begin{equation}
\label{Popousel_jsem_si_nabrousit_britvu_na_kameni.}
\Delta_f : \oC(\alpha) \longrightarrow
\oC(\beta) \ot \oC(F),\ F \fib \alpha \stackrel f\longrightarrow \beta,
\end{equation}
satisfying the  dual versions of Markl's
operads~\cite[Definition~7.1]{Sydney}. 
As in Example~\ref{Pisu_v_Koline_po_skoleni_ze_stavby.}, under obvious 
finitarity assumptions, the component-wise linear dual $\oP^*$ of a
Markl's $\ttP$-operad is a Markl's $\ttP$-cooperad. We will need also
a reduced version of $\cC$ whose underlying $\ttP$-collection is
defined by
\[
\ocC(\alpha) := 
\begin{cases}
0&\hbox{if  $|\alpha| = 1$, and}
\\
\cC(\alpha)&\hbox{if  $|\alpha| \geq 2$,}
\end{cases}
\]
and its structure operation
\begin{equation}
\label{Strasliva_zima_uz_neni.}
\oDelta_f : \ocC(\alpha) \longrightarrow
\ocC(\beta) \ot \ocC(F)
\end{equation}  
equals $\Delta_f$
in~\eqref{Popousel_jsem_si_nabrousit_britvu_na_kameni.} if $|\beta|
\geq 2$ while for $|\beta| = 1$ it is the zero map
\[
\ocC(\alpha) \longrightarrow
\ocC(\beta) \ot \ocC(F) =0.
\]
In fancy language, $\ocC$ is the coaugmentation coideal in Markl's
cooperad $\cC$.
It follows from~\eqref{Volal_Mikes.} that the individual structure
operations~\eqref{Strasliva_zima_uz_neni.} assemble into a single map
\begin{equation}
\label{Zdalo_se_mi_o_Zebrulce.}
\oDelta : \ocC \cong \F^1(\ocC) \longrightarrow  \F^2(\ocC).
\end{equation}

\begin{definition}
A degree $s$ linear map $\varpi
: \oP \to \oP$ of $\ttP$-collections
is a degree $s$ {\em derivation\/} of a Markl's operad  $\oP$ 
if
\[
\varpi\,\circ_f = \circ_f (\varpi \ot \id) +  \circ_f(\id \ot
\varpi),
\]
for every elementary 
$F \fib \alpha \stackrel f\to \beta$ and the associated operation
$\circ_f  : 
\oP(\beta) \ot \oP(F)  \to \oP(\alpha)$.
\end{definition}

One easily sees that, given a
$1$-connected $\ttP$-collection, each 
degree $s$ linear map of $\ttP$-collections $\zeta :E \to \F(E)$ uniquely
extends to a degree $s$ derivation $\varpi$  of the free $\ttP$-operad
$\F(E)$.
For a Markl's $\ttP$-cooperad $\cC$ one has
a degree $-1$ map $\zeta :\ \desusp \ocC  \to  \F^2(\desusp \ocC)$ of
$\ttP$-collections 
defined as the composition
\begin{equation}
\label{Za_chvili_mi_zacina_seminar.}
\zeta :=\
\desusp \ocC \stackrel \uparrow\longrightarrow \ocC
\stackrel\oDelta\longrightarrow
\F^2(\ocC) \stackrel\cong\longrightarrow \,\uparrow^2 \F^2(\desusp \ocC)
\stackrel{\downarrow^2}\longrightarrow \F^2(\desusp \ocC)
\end{equation}
where $\oDelta$ is as in~\eqref{Zdalo_se_mi_o_Zebrulce.} and
\hbox{$\F^2(\ocC) \stackrel\cong\longrightarrow \,\uparrow^2\!
  \F^2(\desusp \ocC)$}  the
obvious canonical isomorphism
\[
\F^2(\ocC)
\cong  \bigoplus_
{F \fib \alpha \stackrel f\to \beta}
\ocC(\beta) \otimes \ocC(F)
 \stackrel\cong\longrightarrow\  \susp^2 \hskip -.3em  \bigoplus_
{F \fib \alpha \stackrel f\to \beta}
\desusp \ocC(\beta) \otimes \desusp \ocC(F) \cong   \,\uparrow^2\!
  \F^2(\desusp \ocC).
\]
Denote finally 
by $\pa_\cobar$ the unique extension of $\zeta$ into a degree $-1$
derivation of \hbox{$\F(\desusp \hskip -.05em \ocC)$}. 
One easily verifies that $\pa_\cobar^{\,2} = 0$.

As we noticed at the beginning of this section, under the
$1$-connectivity assumption, operads and Markl's operads are just
different presentations of the same objects, which is true also for
(Markl's) cooperads. 
We will therefore make no difference between them. Having this
in mind, we formulate

\begin{definition}
The {\em cobar construction\/}  of a
$\ttP$-cooperad $\cC$  is the dg
$\ttP$-operad ${\cobar} (\cC) := (\F(\desusp \ocC),\pa_\cobar)$. The {\em dual dg
  operad\/} of a $\ttP$-operad $\oP$  satisfying
appropriate finitarity assumptions  is the dg $\ttP$-operad
$\D(\oP) := \cobar(\oP^*)$.
\end{definition}

To introduce the Koszulity of a binary quadratic
$\ttP$-operad $\oP$, 
one starts from an injection $\susp E \hookrightarrow  \oP^!$ of
$\ttP$-collections defined as the composite
\[
\susp E \hookrightarrow \F(\susp E) 
\twoheadrightarrow \F(\susp E)/(R^\perp) = \oP^!.
\]
Its linear dual  ${\oP^!}^* \twoheadrightarrow\ \susp
E$ desuspens to a map $\pi:\ \desusp {\oP^!}^*
\twoheadrightarrow E$. As for permutads, one has the related twisting
morphism  $\desusp {\oP^!}^*  \to \oP$, which  is 
the composition
\[
 \desusp  {\oP^!}^*\stackrel\pi\twoheadrightarrow  E 
 \hookrightarrow \F(E) 
\twoheadrightarrow \F(E)/(R) = \oP.
\]
By the freeness of $\F(\desusp
{\oP^!}^*)$, it extends to a  morphism
$\rho : \F(\desusp {\oP^!}^*) \to \oP$  of dg $\ttP$-operads. 
One verifies by direct calculation:

\begin{proposition}
The morphism $\rho$ induces the {\em
  canonical map\/} 
\begin{equation}
\label{Jarce_prijede_M1_a_bude_do_soboty.}
\can:
\D(\oP^!) = (\F(\desusp {\oP^!}^*), \pa_\D)    \longrightarrow (\oP,0)
\end{equation}  
of dg $\ttP$-operads.
\end{proposition}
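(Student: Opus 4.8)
The plan is to prove that $\rho \circ \pa_\D = 0$. Since the target $\oP$ carries the zero differential, a map of dg $\ttP$-operads $\can : (\F(\desusp {\oP^!}^*),\pa_\D) \to (\oP,0)$ is exactly an operad morphism $\rho$ with $\rho\circ\pa_\D = 0$, so this single identity is all that is needed. First I would record that $\rho \circ \pa_\D$ is a \emph{$\rho$-derivation}: for every elementary map $F \fib \alpha \stackrel f\to \beta$ and the associated partial composition $\circ_f$ one has
\[
(\rho\,\pa_\D)(x \circ_f y) = (\rho\,\pa_\D)(x)\circ_f \rho(y) \pm \rho(x)\circ_f (\rho\,\pa_\D)(y),
\]
which is immediate from $\pa_\D$ being a derivation of $\F(\desusp {\oP^!}^*)$ and $\rho$ being a morphism of $\ttP$-operads. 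Just as an ordinary derivation out of a free operad is determined by its values on generators, a $\rho$-derivation out of $\F(\desusp {\oP^!}^*)$ is determined by its restriction to the generating collection $\desusp {\oP^!}^*$, by induction on the grading of Proposition~\ref{Leopold}. It therefore suffices to check that $\rho \circ \pa_\D$ vanishes on $\desusp {\oP^!}^*$.

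Next I would compute $\rho \circ \pa_\D$ on a generator $g \in \desusp {\oP^!}^*(\alpha)$. By the construction of $\pa_\D$ in~\eqref{Za_chvili_mi_zacina_seminar.}, the value $\pa_\D(g)$ is, up to the canonical suspension isomorphism, the image of $g$ under the reduced comultiplication $\oDelta$ of~\eqref{Zdalo_se_mi_o_Zebrulce.}, hence lies in
\[
\F^2(\desusp {\oP^!}^*)(\alpha) = \bigoplus_{F \fib \alpha \stackrel f\to \beta} \desusp {\oP^!}^*(\beta) \ot \desusp {\oP^!}^*(F).
\]
Applying the operad morphism $\rho$ sends each summand $h_1 \ot h_2$ to the partial composition $\rho(h_1)\circ_f \rho(h_2)$ in $\oP$. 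Since the twisting morphism $\pi : \desusp {\oP^!}^* \twoheadrightarrow E$ is supported on the cogenerators and $E(\gamma)=0$ for $|\gamma|\neq 2$, the map $\rho$ annihilates every generator of cardinality $\neq 2$. Consequently $\rho(\pa_\D(g))=0$ whenever $|\alpha|\neq 3$, the only surviving summands being those with $|\beta|=|F|=2$, which forces $|\alpha|=|\beta|+|F|-1=3$.

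It then remains to treat $g \in \desusp {\oP^!}^*(\alpha)$ with $|\alpha|=3$. For such $\alpha$ the operad $\oP^!$ is purely of grade two, so under the identification ${\oP^!}^*(\alpha)\cong\big(\F^2(\susp E^*)(\alpha)/R^\perp(\alpha)\big)^*$ the arity-three component of $\oDelta$ becomes, up to suspension, the inclusion of the double annihilator $(R^\perp)^\perp(\alpha)=R(\alpha)$ into $\F^2(E)(\alpha)$. Thus $\rho(\pa_\D(g))$ is the image of an element of $R(\alpha)\subset \F^2(E)(\alpha)$ under the projection $\F(E)(\alpha)\twoheadrightarrow \F(E)(\alpha)/(R)(\alpha)=\oP(\alpha)$, and is therefore zero. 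Combining this with the previous paragraph gives $\rho\circ\pa_\D=0$ and hence the canonical map~\eqref{Jarce_prijede_M1_a_bude_do_soboty.}.

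The main obstacle is the bookkeeping in the last step: one must track the suspension shifts and Koszul signs accumulated along~\eqref{Za_chvili_mi_zacina_seminar.} precisely enough to identify the arity-three part of $\rho\circ\pa_\D$ with the canonical pairing between $R$ and $R^\perp$, so that the reflexivity $(R^\perp)^\perp=R$ of the component-wise annihilator applies. Structurally, however, this computation is formally identical to the permutadic one behind the canonical map~\eqref{Jarce_prijede_M1.} and to classical quadratic Koszul duality, so in keeping with the telegraphic style of this section the sign verification may be left to the reader.
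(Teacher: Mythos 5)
Your proof is correct and is precisely the direct calculation that the paper alludes to but omits (the text only says ``One verifies by direct calculation''): you reduce $\rho\circ\pa_\D=0$ to generators via the derivation property, note that $\rho$ annihilates generators of cardinality $\neq 2$ so that only $|\alpha|=3$ can contribute, and there identify the surviving term with the image of $R(\alpha)$ under the projection $\F(E)(\alpha)\twoheadrightarrow\oP(\alpha)$, which vanishes. The deferred suspension-and-sign bookkeeping is indeed the same as in the permutadic case~\eqref{Jarce_prijede_M1.} and in classical quadratic duality, so leaving it implicit is consistent with the paper's treatment.
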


\begin{definition}
\label{hodnoceni snad odlozeno}
A binary quadratic $\ttP$-operad $\oP$ is {\em Koszul\/} if the 
canonical map~\eqref{Jarce_prijede_M1_a_bude_do_soboty.} is 
a~component-wise homology isomorphism.
\end{definition}

In the following theorem,  $\ucobar(\ucC)$ denotes the 
cobar construction of a $1$-connected 
classical non-$\Sigma$ 
cooperad\footnote{Notice  
that each $1$-connected cooperad is  coaugmented.} 
$\ucC$, i.e.\ a non-$\Sigma$ version of the construction 
in~\cite[Section~6.5.2]{loday-vallette},  
and $\uD(\uoP) : = \ucobar(\uoP^*)$  the dual dg
operad
of a $1$-connected non-$\Sigma$ operad
$\uoP$ with appropriate finitarity properties that make the dualization possible.

\begin{theorem}
\label{Je_teprve_utery_a_uz_padam_na_usta.}
Let $\ucC$ and $\uoP$ be as above, and $\cC := \des^*(\ucC)$, $\oP :=
\des^*(\uoP)$. Then 
\begin{itemize}
\item[(i)]
the dg $\ttP$-operads $\cobar(\oC)$ and $\des^*(\ucobar(\ucC))$ are isomorphic,
as they are
\item [(ii)]
the dg $\ttP$-operads $\D(\oP)$ and $\des^*(\uD(\uoP))$.
\item[(iii)]
Assume that $\uoP$ is binary quadratic. Then the
$\ttP$-operad $\oP$ is (binary quadratic)
Koszul if and only if  $\underline \oP$ is one.
\end{itemize}
\end{theorem}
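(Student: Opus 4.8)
The plan is to exploit throughout that the restriction $\des^*$ commutes with every construction entering the cobar complex. Several instances of this are already in hand: by Proposition~\ref{Leopold} it intertwines free operads, $\F(\des^*\underline E)\cong\des^*(\uF(\underline E))$, and it manifestly commutes with the component-wise operations (passage to the coaugmentation coideal, the desuspension $\desusp$, and the linear dual) because it merely relabels the component over $\alpha$ by $\des(\alpha)$. The first thing I would record is the matching of the quadratic pieces: using the explicit formula for $\F^2$ from Example~\ref{Koupil jsem si obtahovaci kamen.} together with the fact (from the discrete opfibration property) that the elementary maps $F\fib\alpha\to\beta$ out of a fixed $\alpha$ are in bijection with the elementary maps out of $\des(\alpha)$ in $\Ord$, compatibly with $\des(F)$ and $\des(\beta)$, one gets $\F^2(\des^*\underline E)(\alpha)\cong\uF^2(\underline E)(\des\alpha)$, i.e.\ $\F^2(\des^*\underline E)\cong\des^*(\uF^2(\underline E))$. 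Granting this, part (i) follows: the underlying free $\ttP$-operads of $\cobar(\cC)$ and $\des^*(\ucobar(\ucC))$ agree, both being $\F(\desusp\ocC)\cong\des^*(\uF(\desusp\oucC))$ since $\ocC=\des^*(\oucC)$. It then remains to match the two cobar differentials. Each is the unique derivation extending the degree $-1$ map built in~\eqref{Za_chvili_mi_zacina_seminar.} from the costructure $\oDelta$; since the Markl costructure maps $\oDelta_f$ of $\cC=\des^*(\ucC)$ are, by the cooperadic dual of the diagram defining $m_f$ for $\des^*(\uoS)$ in the Free $\ttP$-operads section, exactly the $\des^*$-images of the classical $\underline\Delta_\gamma$ with $\gamma=\des(f)$, the map $\zeta$ of~\eqref{Za_chvili_mi_zacina_seminar.} coincides with $\des^*$ of its classical counterpart under the identification $\F^2(\desusp\ocC)\cong\des^*(\uF^2(\desusp\oucC))$. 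A derivation being determined by its action on generators, this gives $\pa_\cobar=\des^*(\pa_{\ucobar})$, hence (i).

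Part (ii) is then immediate. Since $\des^*$ commutes with the component-wise linear dual, one has $\oP^*=(\des^*\uoP)^*\cong\des^*(\uoP^*)$ as $\ttP$-cooperads, and applying (i) with $\ucC:=\uoP^*$ yields $\D(\oP)=\cobar(\oP^*)\cong\des^*(\ucobar(\uoP^*))=\des^*(\uD(\uoP))$.

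For part (iii), the binary quadraticity of $\oP=\des^*(\uoP)$ and the identity $\oP^!\cong\des^*(\uoP^!)$ are exactly Proposition~\ref{Lepim_si_L-410.}. Combining this with part (ii) applied to $\uoP^!$ gives $\D(\oP^!)\cong\des^*(\uD(\uoP^!))$. I would then observe that the canonical map~\eqref{Jarce_prijede_M1_a_bude_do_soboty.} is assembled from the inclusion $\susp E\hookrightarrow\oP^!$, a dualization, the projection onto $E$, and the quotient $\F(E)\twoheadrightarrow\oP$, each of which commutes with $\des^*$ by Propositions~\ref{Leopold}, \ref{Jarka_mi_pripravuje_smutne_narozeniny.} and~\ref{Lepim_si_L-410.}; hence the twisting morphism and its extension are natural, so $\can_\oP\cong\des^*(\can_{\uoP})$. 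Consequently, for $\alpha:\underline n\epi\underline k$ the component $\can_\oP(\alpha)$ is, under the identifications above, literally the classical component $\can_{\uoP}(\underline k)$. Since every ordinal $\underline k$ equals $\des(\alpha)$ for some $\alpha\in\ttP$, the map $\can_\oP(\alpha)$ is a homology isomorphism for all $\alpha\in\ttP$ if and only if $\can_{\uoP}(\underline k)$ is one for all $k\geq1$; that is, $\oP$ is Koszul exactly when $\uoP$ is.

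The only genuine labour lies in part (i), and within it in the verification that the cobar differentials agree, i.e.\ that the costructure maps and the quadratic projection $\F^2$ are natural with respect to $\des^*$. I expect the compatibility of the fibers of elementary morphisms with $\des$, on which the identification $\F^2(\des^*\underline E)\cong\des^*(\uF^2(\underline E))$ rests, to be the point demanding the most care; once it is secured, parts (ii) and (iii) are purely formal consequences of the earlier propositions.
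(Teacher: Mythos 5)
Your proposal is correct and follows essentially the same route as the paper: identify the underlying free $\ttP$-operads via Proposition~\ref{Leopold}, match the cobar differentials by the uniqueness of derivation extensions together with the compatibility of the costructure maps with $\des^*$, deduce (ii) by applying (i) to $\uoP^*$, and obtain (iii) from $\can=\des^*(\ucan)$ plus the surjectivity of $\des$ on objects. The only cosmetic difference is that you justify $\F^2(\des^*\underline E)\cong\des^*(\uF^2(\underline E))$ through the bijection of elementary maps induced by the opfibration, where the paper simply reads the grading off Proposition~\ref{Leopold}.
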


\begin{proof}
The restriction along $\des : \ttP \to \Ord$ forms a functor
from the category of dg $\Ord$-collections to the category of dg 
$\ttP$-collections. We already noticed that the restriction
takes $\Ord$-operads, i.e.\ non-$\Sigma$ operads, to
$\ttP$-operads. The same is true for
Markl's operads and cooperads. The restriction also  commutes with
(de)suspensions, component-wise linear duals and homology.
By this we mean the following.

Recall that, if $\uX$ is a rule that assigns to 
$\underline n \in \Ord$ a dg vector space $\uX(\underline n)$,
$\des^*( \uX)$ assigns to $\alpha
\in \ttP$ a dg vector space $\uX(\des (\alpha))$. Then one has
\begin{subequations}
\begin{align}
\label{Mam prohlidku.}
\susp\, \des^*(\uX)  = \des^*(\susp\, \uX), & \
\desusp \des^*(\uX)  = \des^*(\desusp \uX),  
\\
\label{Musim koupit baterky.}
(\des^*(\uX))^* = \des^*(\uX^*),& \
H_*( \des^*(\uX)) = \des^*(H_*(\uX)).
\end{align}
Indeed, for $\alpha \in \ttP$ one has by definition 
\[
(\susp\, \des^*(\uX))(\alpha) =\ \susp\, \uX(\des (\alpha)) =
\des^*(\susp\, \uX)(\alpha),
\]
which gives the first equality of~\eqref{Mam prohlidku.}. The
verification of the remaining ones is similar. We will need one more
auxiliary statement. 

Let $\uE$ be a  $\Ord$-collection, $\underline\zeta : \uE \to \uF(\uE)$ a linear
map, and $\underline\varpi : \uF(\uE) \to \uF(\uE)$ the unique
extension of $\underline\zeta$ into a derivation of the free non-$\Sigma$
operad $\uF(\uE)$. Denote $E := \des^*(\uE)$, $\zeta :=
\des^*(\underline\zeta)$, 
and let $\varpi : \F(E) \to \F(E)$ be the unique 
extension of $\zeta$ into a derivation of
the free $\ttP$-operad~$\F(E)$. Then
\begin{equation}
\label{M1 se nabourala.}
\des^*(\underline\varpi) = \varpi.
\end{equation}
\end{subequations}
The verification uses the obvious fact that $\des^*(-)$ brings
derivations of non-$\Sigma$ operads into derivations of the
corresponding $\ttP$-operads, thus
both  $\des^*(\underline\varpi)$ and $\varpi$ are derivations of
$\ttP$-operads extending the same linear map $E \to \F(E)$.

Let us prove~(i). The underlying non-dg
$\ttP$-operads $\des^*(\uF(\desusp \oucC))$  resp.\ $\F(\desusp \ocC)$ 
of $\des^*(\ucobar(\ucC))$ resp.~$\cobar(\oC)$
are isomorphic by Proposition~\ref{Leopold}. It remains to verify that
$\des^*(\pa_{\ucobar}) =  \pa_\cobar$.
The differential $\pa_{\ucobar}$  in $\ucobar(\ucC)$ 
is, classically, the unique extension of the composition
\begin{equation}
\label{Mam porad rymu.}
\underline \zeta :=\
\desusp \oucC \stackrel \uparrow\longrightarrow \oucC
\stackrel{\overline{\underline\Delta}}\longrightarrow
\uF^2(\oucC) \stackrel\cong\longrightarrow \,\uparrow^2 \uF^2(\desusp \oucC)
\stackrel{\downarrow^2}\longrightarrow \uF^2(\desusp \oucC)
\end{equation}
whose constituents have, as we believe, obvious meanings, into a
degree $-1$ derivation of the free non-$\Sigma$ operad 
$\uF(\desusp \oucC)$. Likewise, we defined 
$\pa_\cobar$ as the extension of the composition $\zeta :\, \desusp \ocC  \to 
\F^2(\desusp \ocC)$ in~\eqref{Za_chvili_mi_zacina_seminar.} into a
derivation of  \hbox{$\F(\desusp \ocC)$}. It
follows from~\eqref{Mam prohlidku.} and the obvious equality
$\overline{\Delta}=  \des^*(\overline{\underline\Delta})$ that
$\des^*(-)$ brings the chain of maps in~\eqref{Mam porad rymu.}
into the chain in~\eqref{Za_chvili_mi_zacina_seminar.}, 
therefore $\des^*(\underline \eta) = \eta$, thus
$\des^*(\pa_{\ucobar}) =  \pa_\cobar$ by~\eqref{M1 se nabourala.}.

To prove (ii), recall that $\uD(\uoP) =
\ucobar(\uoP^*)$ by definition, therefore
\[
\des^* (\uD(\uoP)) = 
\des^*(\ucobar(\uoP^*)) \cong \cobar(\des^* (\uoP^*))
\]
by~(i), while 
\[
\cobar(\des^* (\uoP^*)) =
\cobar ((\des^*(\uoP))^*) = \cobar (\oP^*) =    \D(\oP)
\]
by the first equality 
of~\eqref{Musim koupit baterky.} and the definition of $ \D(\oP)$.

Let us finally attend to~(iii). 
If $\uoP$ is binary quadratic, then $\oP$ is binary quadratic  by
Proposition~\ref{Lepim_si_L-410.}. 
Recall \cite[Definition~4.1.3]{ginzburg-kapranov:DMJ94} that 
the non-$\Sigma$ operad $\uoP$ is Koszul if the
canonical~map\footnote{The authors of \cite{ginzburg-kapranov:DMJ94}
used slightly different conventions regarding the duals and suspensions,
but our version, accommodated to the needs of the present article, is
equivalent.} 
\[
\ucan:  \uD(\uoP^!) \longrightarrow (\uoP,0) 
\]
induces an isomorphism of homology, while the Koszulity of $\oP$
requires, by Definition~\ref{hodnoceni snad odlozeno}, 
the same for the canonical morphism 
\[
\can:  \D(\oP^!) \longrightarrow (\oP,0) .
\]
Under the identifications 
\[
\D(\oP^!) = \D(\des^*(\uoP)^!) = \D(\des^*(\uoP^!)) \cong   \des^*(\uD(\uoP^!))
\]
that follow from the second part of Proposition~\ref{Lepim_si_L-410.}
and the already established  item~(ii) of
Theorem~\ref{Je_teprve_utery_a_uz_padam_na_usta.},  
one clearly has that $\can  =\des^*(\ucan)$  which,
combined with the second equality of~\eqref{Musim koupit baterky.},
gives  \hbox{$H_*(\can) = \des^*(H_*(\ucan))$}. Thus the map $\can$ is a homology
isomorphism if 
\[
H_*(\can(\alpha)) =  H_*(\ucan(\des( \alpha)))
\]
is an isomorphism for each $\alpha \in \ttP$.
Since $\des : \ttP \to \Ord$ is an epimorphism on objects,
this happens if and only if $H_*(\ucan(\underline n))$ is an
isomorphism for each $\underline n \in \Ord$, i.e.\ when $\ucan$
induces an isomorphism of homology.
\end{proof}

\begin{corollary}
\label{Zavolam_Jarce?}
The terminal $\ttP$-operad $\Pterm$ is Koszul.
\end{corollary}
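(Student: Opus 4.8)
The plan is to deduce this immediately from item~(iii) of Theorem~\ref{Je_teprve_utery_a_uz_padam_na_usta.}, applied to the non-$\Sigma$ operad $\uoP := \uAss$. First I would recall the identification $\Pterm \cong \des^*(\uAss)$ noted in Section~\ref{Pujdeme_dnes_s_Jarcou_na_muslicky?}: the terminal $\ttP$-operad is the restriction along $\des : \ttP \to \Ord$ of the terminal non-$\Sigma$ operad $\uAss$ governing associative algebras. This is the only structural input specific to the present situation, and it was already established.

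Next I would invoke the classical Fact recalled in the introduction, namely that $\uAss$ is binary quadratic and Koszul. Concretely, the isomorphism $\D(\uAss) \cong \CC_*(\K)$ with Stasheff's operad, combined with the contractibility of the associahedron, shows that $\ainf$ is the minimal model of $\uAss$ and hence that $\uAss$ is Koszul. The finitarity hypotheses needed to form the dual dg operad are satisfied trivially here, since $\uAss(k) \cong \bbk$ for every $k$.

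With these two inputs in hand, Theorem~\ref{Je_teprve_utery_a_uz_padam_na_usta.}(iii) --- which asserts that for a binary quadratic non-$\Sigma$ operad $\uoP$ the restricted $\ttP$-operad $\des^*(\uoP)$ is binary quadratic Koszul if and only if $\uoP$ is --- yields at once that $\Pterm = \des^*(\uAss)$ is Koszul. I do not expect any genuine obstacle at this stage: the entire weight of the argument has already been absorbed into Theorem~\ref{Je_teprve_utery_a_uz_padam_na_usta.}, whose proof reduces the Koszulity of the restricted canonical map, via the compatibility $\can = \des^*(\ucan)$ of $\des^*(-)$ with linear duals, (de)suspensions and homology, to the Koszulity of $\ucan$ for the already-Koszul operad $\uAss$. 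The corollary is therefore a one-line specialization.
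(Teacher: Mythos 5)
Your proposal is correct and follows exactly the paper's own route: the paper likewise deduces the corollary from Theorem~\ref{Je_teprve_utery_a_uz_padam_na_usta.}(iii) applied to the terminal non-$\Sigma$ operad $\uAss$, whose Koszulity it calls ``superclassical,'' using the identification $\Pterm \cong \des^*(\uAss)$. The only difference is cosmetic: you spell out the classical input (the isomorphism with the chains on the associahedra) that the paper leaves implicit.
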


\begin{proof}
One way of proving the statement would be to identify $\D(\Pterm^!)$
with the minimal model $\min$ described in
Proposition~\ref{Vcera_jsem_se_vratil_z_Ciny.}. 
The corollary however follows  from
Theorem~\ref{Je_teprve_utery_a_uz_padam_na_usta.} since $\Pterm$ is
the restriction of the terminal non-$\Sigma$ operad $\Ass$ whose
Koszulity is superclassical.
\end{proof}


\def\cprime{$'$}\def\cprime{$'$}

\end{document}